\newtheorem{theorem}{Theorem}[section]
\newtheorem{lemma}[theorem]{Lemma}
\newtheorem{proposition}[theorem]{Proposition}
\theoremstyle{definition}
\newtheorem{definition}[theorem]{Definition}
\newtheorem{remark}[theorem]{Remark}
\numberwithin{equation}{section}
\numberwithin{table}{section}
\numberwithin{figure}{section}
\def\CC{{\mathbb C}}
\newcommand{\IN}{\ensuremath{\mathbb{N}}}
\newcommand{\op}[1]{{#1}^\mathrm{op}} 
\newcommand{\SC}{\ensuremath{\mathsf{SC}}} 
\newcommand{\ID}{\ensuremath{\mathsf{ID}}} 
\newcommand{\B}{\ensuremath{\mathcal{B}}} 
\newcommand{\F}{\ensuremath{\mathcal{F}}} 
\newcommand{\spac}[2][n]{\ensuremath{s^\mathrm{ac}_{#1}({#2})}}
\newcommand{\spa}[2][n]{\ensuremath{s^\mathrm{a}_{#1}({#2})}}
\renewcommand{\SS}{{\mathfrak S}}
\def\RR{\mathbb{R}}
\DeclareMathOperator{\var}{var}
\title{Associative-commutative spectra for some varieties of groupoids}
\author{Jia Huang}
\address[J. Huang]{Department of Mathematics and Statistics, University of Nebraska, Kearney, NE 68849, USA}
\email{huangj2@unk.edu}
\author{Erkko Lehtonen}
\address[E. Lehtonen]%
     {Department of Mathematics \\
     Khalifa University \\
     P.O.~Box 127788 \\
     Abu Dhabi \\
     United Arab Emirates
    }
\email{erkko.lehtonen@ku.ac.ae}
\thanks{}
\keywords{Associative-commutative spectrum; associative spectrum; binary operation; tree, $3$-element groupoid}
\begin{document}
\begin{abstract}
The associative spectrum of a groupoid (i.e., a set with a binary operation) measures its nonassociativity while the associative-commutative spectrum measures both nonassociativity and noncommutativity of the groupoid.
The two spectra are also the coefficients of the Hilbert series of certain operads. 
We establish upper bounds for the two spectra of various varieties of groupoids defined by different sets of identities and provide examples (often groupoids with three elements) for which the upper bounds are achieved.
Our results have connections to many interesting combinatorial objects and integer sequences and naturally lead to some questions for future studies.
\end{abstract}

\maketitle

\section{Introduction}\label{sec:Intro}

A \emph{groupoid} $(G,*)$ is a basic algebraic structure that consists of a set $G$ together with a binary operation $*$ defined on $G$.
Associativity and commutativity are common properties that could be satisfied by a groupoid.
Cs\'{a}k\'{a}ny and Waldhauser~\cite{AssociativeSpectra1} defined the \emph{associative spectrum} (also called the \emph{subassociativity type} by Braitt and Silberger~\cite{Subassociative}) to measure the failure of a groupoid to be associative, and we introduced the \emph{associative-commutative spectrum}, or simply \emph{ac-spectrum}, to measure both nonassociativity and noncommutativity of a groupoid in earlier work~\cite{AC-Spectrum}; see the definition below.

\begin{definition}
Fix a countable list of distinct variables $x_1, x_2, \ldots$.
Let $\B_n$ denote the set of all \emph{bracketings} of $x_1, \ldots, x_n$, which are terms in the language of groupoids obtained by inserting pairs of parentheses into the word $x_1 x_2 \cdots x_n$ in all valid ways.
Let $\F_n$ denote the set of \emph{full linear terms} over $x_1, \ldots, x_n$, which are obtained by permuting the variables in the bracketings of $x_1, \ldots, x_n$.
We can view $\B_n$ as a subset of $\F_n$.
Every term $t\in \F_n$ induces an $n$-ary operation $t^*$ on a groupoid $(G,*)$.
It is often convenient to think about the terms in $\F_n$ or the $n$-ary operations induced by them in terms of the corresponding \emph{\textup{(}ordered, full\textup{)} binary trees} with $n$ labeled leaves;
see the example below for $\B_4$, which can give $\F_4$ if the variables are permuted in all possible ways.
\[\small
\begin{array}{ccccc}
\Tree [.  [. [. 1 2 ] 3 ] 4 ] &
\Tree [.  [. 1 2 ] [. 3 4 ] ] &
\Tree [.  [. 1 [. 2 3 ] ] 4 ] &
\Tree [. 1 [. [. 2 3 ] 4 ] ] &
\Tree [. 1  [. 2 [. 3 4 ] ] ] \\
\rule{5pt}{0pt}((x_1 {*} x_2) {*} x_3) {*} x_4\rule{5pt}{0pt} &
\rule{5pt}{0pt}(x_1 {*} x_2) {*} (x_3 {*} x_4)\rule{5pt}{0pt} &
\rule{5pt}{0pt}(x_1 {*} (x_2 {*} x_3)) {*} x_4 \rule{5pt}{0pt} &
\rule{5pt}{0pt}x_1 {*} ((x_2 {*} x_3) {*} x_4)\rule{5pt}{0pt} &
\rule{5pt}{0pt}x_1 {*} (x_2 {*} (x_3 {*} x_4))\rule{5pt}{0pt}
\end{array}\]
The \emph{associative spectrum} (resp., \emph{ac-spectrum}) of a groupoid $(G,*)$, or of its binary operation $*$, is a sequence whose $n$th term is $\spa{*}:=|P_n(*)|$ (resp., $\spac{*}:=|\overline{P}_n(*)|$), where $P_n(*):=\{t^*: t\in \B_n\}$ (resp.,  $\overline P_n(*):=\{t^*: t\in \F_n\}$), for $n=1,2,\ldots$.
It turns out that $\{P_n(*)\}_{n\ge1}$ (resp., $\{\overline P_n(*)\}_{n\ge1}$) together with a composition function becomes a \emph{nonsymmetric operad} (resp., \emph{symmetric operad}) that satisfies certain coherence axioms~\cite{Operads}, and the \emph{Hilbert series} of this operad is the generating function (resp., exponential generating function) of the associative spectrum (resp., ac-spectrum) of $(G,*)$.
\end{definition}

By the above definition, we have (1) $\spa{*}=1$ for $n=1,2$, (2) $\mathrm{s^{ac}_1}(*)=1$, and (3) $\mathrm{s^{ac}_2}(*)$ is either $1$ or $2$, depending on whether $*$ is commutative.
Thus we may assume $n\ge3$ when necessary.
It is easy to see that isomorphic or anti-isomorphic groupoids have the same associative spectrum and the same ac-spectrum, where two groupoids $(G,*)$ and $(H, \otimes)$ are said to be \emph{anti-isomorphic}, denoted by $G \simeq \op{H}$, if there is a bijection $f: G\to H$ such that $f(a*b) = f(b) \otimes f(a)$ for all $a,b\in G$.

It is clear that $\spa{*}=1$ for all $n \in \IN$ if and only if $*$ associative and that $\spac{*}=1$
for all $n \in \IN$ if and only if $*$ is associative and commutative, where $\IN := \{1,2,\ldots\}$.
On the other hand, we have $\spa{*}\le C_{n-1}$, where $C_n:=\frac1{n+1} \binom{2n}{n}$ is the ubiquitous \emph{Catalan number}, and thus $\spac{*}\le n!C_{n-1}$.
We showed in previous work~\cite{AC-Spectrum} that a commutative groupoid $(G,*)$ must have $\spac{*}\le D_{n-1}$, where $D_n:=(2n!)/(2^n n!)$ is the solution to Schr\"oder's third problem~\cite[A001147]{OEIS}, and that an associative groupoid $(G,*)$ must have $\spac{*}\le n!$, which holds as an equality if the groupoid is noncommutative and has an identity element
(see Theorem~\ref{thm:n!} for a generalization).

In addition, the precise values of the associative spectrum and ac-spectrum have been determined for various groupoids~\cite{AssociativeSpectra1, CatMod, VarCat, AC-Spectrum, GraphAlgebra1, GraphAlgebra2}, including $2$-element groupoids, generalizations of addition and subtraction, exponentiation, arithmetic/geometric/harmonic mean, cross product, Lie algebras with an $\mathfrak{sl}_2$-triple, graph algebras, and so on.
The results show connections with interesting combinatorial objects, avoided patterns, and integer sequences.
However, the ac-spectra of $3$-element groupoids are largely undetermined.

According to the Siena Catalog~\cite{Siena}, there are $3330$ non-isomorphic $3$-element groupoids, which are indexed from $1$ to $3330$.
Each of these groupoids is determined by a binary operation $*$ defined on the set $\{0,1,2\}$.
We write them as $\SC1, \SC2, \ldots, \SC3330$.
There are $729$ \emph{idempotent} $3$-element groupoids, which can be labeled in a different way: $\ID0, \ID1, \ldots, \ID728$.
Cs\'{a}k\'{a}ny and Waldhauser~\cite{AssociativeSpectra1} showed the following (see Table~\ref{tab:3-element groupoids}).
\begin{itemize}
\item
Both $\ID35 = \SC271 (\simeq \op{\SC1610})$ and $\ID68 = \SC356 (\simeq \op{\SC2032})$ have associative spectrum $\spa{*} = 2^{n-2}$ for $n\ge2$.
\item
Both $\SC1066$ and $\SC10 (\simeq \op{\SC367})$ have associative spectrum $\spa{*} = n-1$ for $n\ge1$.
\item
Both $\SC405$ and $\SC3242 (\simeq \op{\SC3302})$ have associative spectrum $\spa{*} = 3$ for $n>3$ (it is easy to check that $\spa{*} = 1$ for $n=1,2$ and $\spa{*} = 2$ for $n=3$).
\item
The groupoid $\SC79$ has associative spectrum $\spa{*} = F_{n+1}-1$ for $n\ge2$, where $F_{n+1}$ is the \emph{Fibonacci number} defined by $F_{n+1} := F_n + F_{n-1}$ for $n\ge1$ and $F_i=i$ for $i=0,1$,
\end{itemize}

Our original motivation for this work was to determine the ac-spectra of the above $3$-element groupoids, whose Cayley tables are given in Table~\ref{tab:3-element groupoids}.
However, we are able to establish more general results on various varieties of groupoids, where a \emph{variety} of groupoids axiomatized by a set $\Sigma$ of identities is the family of all groupoids satisfying the identities in $\Sigma$.
For each variety of groupoids considered in this paper, we establish an upper bound for the associative spectra and an upper bound for the ac-spectra of the groupoids belonging to this variety; if the latter upper bound is reached by a member of the variety, so is the former. 
Moreover, we show that both upper bounds are attained by at least one $3$-element groupoid.

\begin{table}[ht]
\begin{center}
\small
\begin{tabular}{ccccccc}
\begin{tabular}{c|ccc}
$*$ & 0 & 1 & 2 \\
\hline
0 & 0 & 0 & 0 \\
1 & 1 & 1 & 0 \\
2 & 2 & 2 & 2 
\end{tabular}
&
\begin{tabular}{c|ccc}
$*$ & 0 & 1 & 2 \\
\hline
0 & 0 & 0 & 0 \\
1 & 2 & 1 & 1 \\
2 & 1 & 2 & 2 
\end{tabular}
&
\begin{tabular}{c|ccc}
$*$ & 0 & 1 & 2 \\
\hline
0 & 0 & 0 & 2 \\
1 & 0 & 0 & 2 \\
2 & 2 & 2 & 1 
\end{tabular}
&
\begin{tabular}{c|ccc}
$*$ & 0 & 1 & 2 \\
\hline
0 & 0 & 0 & 0 \\
1 & 0 & 0 & 0 \\
2 & 1 & 0 & 0 
\end{tabular}
&
\begin{tabular}{c|ccc}
$*$ & 0 & 1 & 2 \\
\hline
0 & 0 & 0 & 1 \\
1 & 0 & 0 & 1 \\
2 & 1 & 1 & 0 
\end{tabular}
&
\begin{tabular}{c|ccc}
$*$ & 0 & 1 & 2 \\
\hline
0 & 1 & 1 & 1 \\
1 & 2 & 2 & 2 \\
2 & 0 & 0 & 0 
\end{tabular}
&
\begin{tabular}{c|ccc}
$*$ & 0 & 1 & 2 \\
\hline
0 & 0 & 0 & 0 \\
1 & 0 & 1 & 0 \\
2 & 0 & 0 & 1 
\end{tabular}
\\
$\SC271$ & $\SC356$ & $\SC1066$ & $\SC10$ & $\SC405$ & $\SC3242$ & $\SC79$ \\
$\simeq \op{\SC1610}$ & $\simeq \op{\SC2032}$ & $= \op{\SC1066}$ & $\simeq \op{\SC367}$ & $\simeq \op{\SC405}$ & $\simeq \op{\SC3302}$ & $\simeq \op{\SC79}$
\\[2ex]
\begin{tabular}{c|ccc}
$*$ & 0 & 1 & 2 \\
\hline
0 & 0 & 1 & 1 \\
1 & 0 & 1 & 2 \\
2 & 0 & 1 & 2 
\end{tabular}
&
\begin{tabular}{c|ccc}
$*$ & 0 & 1 & 2 \\
\hline
0 & 0 & 1 & 2 \\
1 & 0 & 1 & 2 \\
2 & 1 & 0 & 2 
\end{tabular}
&
&
\begin{tabular}{c|ccc}
$*$ & 0 & 1 & 2 \\
\hline
0 & 0 & 0 & 1 \\
1 & 0 & 0 & 0 \\
2 & 0 & 0 & 0 
\end{tabular}
&
&
\begin{tabular}{c|ccc}
$*$ & 0 & 1 & 2 \\
\hline
0 & 1 & 2 & 0 \\
1 & 1 & 2 & 0 \\
2 & 1 & 2 & 0 
\end{tabular}
&
\\
$\SC1610$ & $\SC2032$ & & $\SC367$ & & $\SC3302$ & 
\end{tabular}
\end{center}
\bigskip
\caption{Some $3$-element groupoids}\label{tab:3-element groupoids}
\end{table}

For example, we showed in earlier work~\cite{AC-Spectrum} that a commutative groupoid must have $\spac{*}\le D_{n-1}$ and if the equality in this upper bound holds, so does the equality in the upper bound $\spa{*}\le C_{n-1}$.
In the same paper, we showed that $\spac{*}=D_{n-1}$ for a $3$-element groupoid called the \emph{rock-paper-scissors groupoid}, which turns out to be isomorphic to $\SC1108$, and the proof is also valid for $\SC2407$ and $\SC3093$.
\begin{center}
\begin{tabular}{ccccccc}
\begin{tabular}{c|ccc}
$*$ & $0$ & $1$ & $2$ \\
\hline
$0$ & $0$ & $0$ & $2$ \\
$1$ & $0$ & $1$ & $1$ \\
$2$ & $2$ & $1$ & $2$ 
\end{tabular}
&
\begin{tabular}{c|ccc}
$*$ & $0$ & $1$ & $2$ \\
\hline
$0$ & $1$ & $0$ & $0$ \\
$1$ & $0$ & $2$ & $0$ \\
$2$ & $0$ & $0$ & $0$ 
\end{tabular}
&
\begin{tabular}{c|ccc}
$*$ & $0$ & $1$ & $2$ \\
\hline
$0$ & $1$ & $1$ & $0$ \\
$1$ & $1$ & $2$ & $0$ \\
$2$ & $0$ & $0$ & $1$ 
\end{tabular}
\\
$\SC1108$ & $\SC2407$ & $\SC3093$
\end{tabular}
\end{center}
Therefore, we have the following result.

\begin{theorem}[\cite{AC-Spectrum}]
A groupoid $(G,*)$ satisfying the identity $xy\approx yx$ must have $\spac{*}\le C_{n-1}$ and $\spac{*} \le D_{n-1}$ for $n=1,2,\ldots$, where the first inequality holds as an equality whenever the second does and both equalities hold for the $3$-element groupoids $\SC1108$, $\SC2407$, and $\SC3093$.
\end{theorem}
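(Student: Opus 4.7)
The plan is to handle the two upper bounds, the equality implication, and the attainment separately.

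First, the bound $\spa{*} \le C_{n-1}$ is immediate from the definition, since $P_n(*)$ is the image of $\B_n$ under $t \mapsto t^*$ and $|\B_n| = C_{n-1}$; no commutativity hypothesis is needed. For $\spac{*} \le D_{n-1}$, I would introduce the equivalence relation $\sim$ on $\F_n$ generated by swapping the two subtrees at a single internal node of a tree. Under commutativity, $t_1 \sim t_2$ forces $t_1^* = t_2^*$, so $\overline{P}_n(*)$ has at most as many elements as there are $\sim$-classes. These classes are in bijection with \emph{unordered} full binary trees with leaves labeled $1, \dots, n$, of which there are $(2n-3)!! = D_{n-1}$.

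For the implication $\spac{*} = D_{n-1} \Rightarrow \spa{*} = C_{n-1}$, the key point is that two distinct bracketings of $x_1 x_2 \cdots x_n$ lie in distinct $\sim$-classes. Any swap of subtrees at an internal node of a bracketing permutes the left-to-right order of the leaf labels, so at most one term in a given $\sim$-class has leaves reading $1, 2, \ldots, n$ in order; in particular, only the $C_{n-1}$ $\sim$-classes whose subtrees all carry contiguous intervals of labels contain a bracketing at all. Hence the $C_{n-1}$ bracketings inject into the $D_{n-1}$ classes, and if every class yields a distinct operation, so does every bracketing.

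The substantive part is to exhibit $\spac{*} = D_{n-1}$ for each of $\SC1108$, $\SC2407$, $\SC3093$. I would proceed by induction on $n$: for any two $\sim$-inequivalent terms $t, t' \in \F_n$, construct an input tuple $(a_1, \ldots, a_n) \in \{0,1,2\}^n$ on which $t^*$ and $(t')^*$ differ. For $\SC1108$ (the rock--paper--scissors groupoid), the operation is idempotent and \emph{conservative}, i.e., $a * b \in \{a,b\}$ for all $a,b$; this lets the value of each subterm be read as a vote between the two child subterms, and a careful choice of input allows one to recover the topmost split of the unordered tree of $t$ from $t^*$ and then apply induction to the subtrees. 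For $\SC2407$ and $\SC3093$, whose Cayley tables are structurally different (neither is idempotent, and $\SC2407$ collapses all off-diagonal products to $0$), I would either adapt the probing scheme with modifications tailored to their specific tables, or identify a common abstract property---for instance, that each of these groupoids satisfies no linear identity beyond $xy \approx yx$---to unify the three cases.

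The main obstacle is precisely the design of these probes and the verification that no accidental coincidences arise in the induction. For $\SC1108$ the conservative rock--paper--scissors symmetry makes the bottom-up tracking clean, but for $\SC2407$ and $\SC3093$ the heavier collapsing of inputs makes it harder to certify that inequivalent trees remain distinguishable, which is why a single uniform argument identifying the right common property is the most efficient route.
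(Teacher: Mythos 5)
Your treatment of the two upper bounds and of the implication ``$\spac{*}=D_{n-1}\Rightarrow\spa{*}=C_{n-1}$'' is correct and is essentially the argument behind the theorem as quoted from the authors' earlier paper \cite{AC-Spectrum}: the bound $D_{n-1}=(2n-3)!!$ comes from counting unordered binary trees with $n$ labeled leaves, i.e.\ the classes of the relation generated by swapping the two subtrees at an internal node, and distinct bracketings lie in distinct classes because an unordered labeled tree admits at most one planar embedding with the leaves in increasing order. (The first inequality in the printed statement is a typo for $\spa{*}\le C_{n-1}$, which is how you correctly read it.)

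The gap is in the attainment claim, which is the substantive half of the theorem. To conclude $\spac{*}=D_{n-1}$ for $\SC1108$, $\SC2407$ and $\SC3093$ you must exhibit, for every pair of $\sim$-inequivalent terms $s,t\in\F_n$, an assignment on which $s^*$ and $t^*$ differ; your proposal only describes a strategy for doing so and explicitly concedes that ``the design of these probes and the verification that no accidental coincidences arise'' is unresolved, and that for $\SC2407$ and $\SC3093$ you would ``either adapt the probing scheme \ldots{} or identify a common abstract property'' without naming one. That is a plan, not a proof: conservativity and idempotence of $\SC1108$ do not by themselves let you recover the top split of the unordered tree from the term operation (one still has to produce a concrete witnessing assignment for each inequivalence and rule out collisions in the induction), and $\SC2407$, which sends every off-diagonal pair to $0$, visibly requires a different probe, so the three cases cannot be waved through together. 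The cited work \cite{AC-Spectrum} supplies the missing distinguishing-assignment argument for the rock--paper--scissors groupoid, and the present paper notes that that same argument applies verbatim to $\SC2407$ and $\SC3093$; your proposal does not supply an equivalent of it, so the equalities $\spac{*}=D_{n-1}$ and hence $\spa{*}=C_{n-1}$ for the three groupoids remain unproved.
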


In this paper, we provide a series of results that are similar to the above one.
A summary of our results is given by Table~\ref{tab:summary}, where we use the well-known \emph{Bell number} $B_n$ counting partitions of the set $\{1,2,\ldots,n\}$ into unordered nonempty blocks, the \emph{restricted Bell number} $B_{n,m}$ counting partitions of $\{1,2,\ldots,n\}$ into unordered nonempty blocks of size at most $m$~\cite{Mezo}, and the \emph{ordered Bell number} or \emph{Fubini number} $B'_n$ counting partitions of $\{1,2,\ldots,n\}$ into ordered nonempty blocks~\cite[A000670]{OEIS}.
The ``$n\ge$'' column in Table~\ref{tab:summary} gives the smallest values of $n$ for which the upper bounds of $\spa{*}$ and $\spac{*}$ are valid and sharp.
Note that different varieties of groupoids in the table may have the same associative spectrum upper bound but different ac-spectrum upper bounds (the upper bounds for $\spac{*}$ in Prop.~\ref{prop:7} and Prop.~\ref{prop:405} are different when $n=3$).
Therefore, the ac-spectrum often offers a finer distinction than the associative spectrum between groupoids satisfying different sets of identities. 

\begin{table}[ht]
\renewcommand{\arraystretch}{1.2}
\begin{center}
\begin{tabular}{lccccc} 
\toprule
Identities satisfied by $(G,*)$ & $n\ge$ & $\spa{*}\le$ & $\spac{*}\le$ & Examples for $=$ & reference \\
\midrule 
(1) & $1,1$ & $1$ & $n$ & $\SC275 (\simeq \op{\SC2029})$ & Prop.~\ref{prop:275} \\
\hline
(3), (4), (5), (7) & $3,3$ & $2$ & $n+1$ & $\substack{\SC7 (\simeq \op{\SC4}) \\ \SC28 (\simeq \op{\SC5})}$ & Prop.~\ref{prop:7} \\
\hline
(2), (7), (15) & $4,4$ & $3$ & $n+1$ & $\SC405$ & Prop.~\ref{prop:405} \\
\hline
(3), (5), (7), (8), (9) & $3,3$ & $2$ & $2n$ & $\SC189 (\simeq \op{\SC170})$ & Prop.~\ref{prop:189} \\
\hline
(5), (7), (10), (11), (12), (16) & $4,4$ & $3$ & $3n$ & $\SC3242 (\simeq \op{\SC3302})$ & Prop.~\ref{prop:3242} \\
\hline
(5), (7), (11), (13), (17), (18) & $4,4$ & $4$ &$2n^2$ & $\SC3162 (\simeq \op{\SC2467})$ & Thm.~\ref{thm:3162} \\
\hline
(2), (7) & $2,2$ & $n-1$ & $2^{n-1}-1$ & $\SC1066$ & Prop.~\ref{prop:1066} \\
\hline
(4), (5), (7) & $2,1$ & $n-1$ & $n! + \displaystyle\sum_{k=0}^{n-3} k! \binom{n}{k}$ & $\SC367 (\simeq \op{\SC10})$ & Prop.~\ref{prop:367}\\
\hline
(3), (6), (14) & $2,2$ & $2^{n-2}$ & $2^n-2$ & $\SC2302 (\simeq \op{\SC2155})$ & Prop.~\ref{prop:subtration} \\
\hline
(3), (7), (12) & $2,2$ & $2^{n-2}$ & $n(2^{n-1}-1)$ & $\substack{\SC271 (\simeq \op{\SC1610}) \\ \SC356 (\simeq \op{\SC2032})}$ & Thm.~\ref{thm:A058877} \\ 
\hline
(2), (11) & $2,2$ & $F_{n+1}-1$ & $B_{n,2}-1$ & $\SC79$, $\SC1701$ & Prop.~\ref{prop:79} \\
\hline
(3), (5) & $2,1$ & $2^{n-2}$ & $nB_{n-1}$ & $\substack{\SC41 (\simeq \op{\SC398}) \\ \SC96 (\simeq \op{\SC1069})}$ & Thm.~\ref{thm:set-partition} \\
\hline
(5), (7) & $2,1$ & $2^{n-2}$ & $n B'_{n-1}$ & $\substack{\SC262 (\simeq \op{\SC1441}) \\ \SC1812 (\simeq \op{\SC1793}) \\ \SC2446 (\simeq \op{\SC2430}) }$ & Thm.~\ref{thm:ordered-set-partition} \\
\bottomrule
\end{tabular}

\vskip10pt
(1) $xy \approx x$ \quad
(2) $xy \approx yx$ \quad
(3) $(xy)z\approx (xz)y$ \quad
(4) $x(yz) \approx y(xz)$ \quad
(5) $x(yz) \approx x(zy)$ \quad
(6) $x(yz) \approx z(yx)$ 

\vskip5pt

(7) $w(x(yz)) \approx w((xy)z)$ \quad
(8) $(wx)(yz) \approx (w(xy))z$ \quad
(9) $w(x(yz)) \approx ((wx)y)z$ 

\vskip5pt

(10) $((wx)y)z \approx ((wy)x)z$ \quad
(11) $((wx)y)z \approx ((wx)z)y$ \quad 
(12) $(wx)(yz) \approx (wy)(xz)$ 

\vskip5pt

(13) $(w(xy))z \approx (w(xz))y$ \quad
(14) $w(x(yz)) \approx (w(xy))z$ \quad
(15) $(v(wx))(yz) \approx (vw)(x(yz))$

\vskip5pt

(16) $((vw)x)y)z \approx v(w(x(yz)))$ \quad
(17) $v(w(x(yz))) \approx ((v(wx))y)z$ \quad
(18) $((vw)(x(yz)) \approx (((vw)x)y)z$
\end{center}

\bigskip
\caption{Summary of results}\label{tab:summary}
\end{table}

It is sometimes convenient to use not only identities but other conditions to describe a family of groupoids satisfying certain upper bounds for their spectra.
Recall that every term $t\in\F_n$ corresponds to a binary tree with $n$ leaves labeled by $1,\ldots,n$.
Each leaf $i$ has its \emph{depth} $d_i(t)$ (resp.\ \emph{left depth} $\delta_i(t)$ or \emph{right depth} $\rho_i(t)$) defined as the number of edges (resp., left/right edges) in the unique path to the root of $t$.
By abuse of notation, we also speak of these three kinds of depths for the variables in $t$.
Previous work \cite{CatMod, AC-Spectrum} used the congruence modulo $m$ relation on depths to study the associative spectra and ac-spectra of certain groupoids, and some of the results there can be rephrased to include Proposition~\ref{prop:subtration} as a special case.
We can also generalize Proposition~\ref{prop:189} and Proposition~\ref{prop:3242} in a similar way.

The paper is structured as follows.
We give some basic definitions and properties on the associative spectrum and ac-spectrum in Section~\ref{sec:prelim}.
We establish some polynomial upper bounds and exponential upper bounds in Section~\ref{sec:poly} and Section~\ref{sec:expo}, respectively. 
We provide more upper bounds related to set partitions in Section~\ref{sec:set-partition}.
We use congruence on leaf depths in binary trees to provide generalizations of some of our results in Section~\ref{sec:depth}.
Finally, we make some remarks and pose some questions for future research in Section~\ref{sec:question}.

\section{Preliminaries}\label{sec:prelim}

We first give some notation and terminology.
A \emph{term}%
\footnote{More specifically, we are speaking about terms in the language of groupoids, i.e., terms of type $(2)$.
Since our language contains only one operation symbol, which is binary, we may simply omit it from terms.
Variables and brackets are sufficient for writing down terms unambiguously in this language.}
$t$ over a set of variables $X$ (we often use $X_n:=\{x_1,\ldots,x_n\}$) is a bracketing of a word $x_{i_1} \cdots x_{i_k}$, where $x_{i_1}, \ldots, x_{i_k} \in X$; let $\var(t)$ denote the set of all variables in $t$.
If $i_1, \ldots, i_k$ are distinct, then $t$ is called a \emph{linear term} with $|t|:=k$.
Define the \emph{leftmost bracketing} $[t_1,\ldots,t_k]$ of terms $t_1, \ldots, t_k$ recursively by $[t_1] := t_1$ and
$[t_1, \dots, t_{n+1}] := ([t_1, \dots, t_n]t_{n+1}])$ for $n \geq 1$.
Similarly, define the \emph{rightmost bracketing} $\langle t_1,\ldots,t_k\rangle$ recursively by $\langle t_1 \rangle := t_1$ and $\langle t_1, \dots, t_{n+1} \rangle := ( t_1 \langle t_2, \dots, t_{n+1} \rangle )$ for $n \geq 1$.
We can write every term as $t = [t_0, t_1, \ldots, t_m]$ with $|t_0|=1$ for some $m \in \IN$;
this is known as the \emph{leftmost decomposition}~\cite[Definition~6.1.2]{AC-Spectrum}.

Terms can be evaluated in a groupoid $(G,*)$ as follows.
Given an \emph{assignment} $h \colon X \to G$ of values from $G$ for the variables in $X$,
we can extend $h$ to a map $\overline{h}$ defined on the set of all terms over $X$ with the following recursive definition.
We have $\overline{h}(x) := h(x)$ for every variable $x \in X$ (because $\overline{h}$ extends $h$), and
if $t = (t_1 t_2)$ for subterms $t_1$ and $t_2$, then we define $\overline{h}(t) := \overline{h}(t_1) * \overline{h}(t_2)$.
In this way, every term $t$ over $X_n$ \emph{induces} an $n$-ary operation $t^*$ on $(G,*)$ (called a \emph{term function}):
$t^*(a_1, \dots, a_n) := \overline{h}_\mathbf{a}(t)$, where $\overline{h}_\mathbf{a}$ is the extension of the assignment $h_\mathbf{a} \colon X_n \to G$ that maps $x_i$ to $a_i$ for all $i \in \{1, \dots, n\}$.
For notational simplicity, we will denote the extension $\overline{h}$ of an assignment $h$ also by $h$.

An \emph{identity} is a pair of terms, usually written as $s \approx t$.
A groupoid $(G,*)$ \emph{satisfies} an identity $s \approx t$ if $s^* = t^*$.
(Here we have assumed that $s$ and $t$ are terms over $X_n$ for some $n \in \IN$ -- this can always be done.)

In the subsequent sections, we will prove several results, each of which provides
upper bounds for the ac-spectrum and the associative spectrum of 
a \emph{variety} of groupoids axiomatized by a set $\Sigma$ of identities, i.e., the family of all groupoids satisfying the identities in $\Sigma$.
We will employ the following proof technique.
We assume that a groupoid $(G,*)$ satisfies certain identities.
Using these identities, we transform each full linear term $t$ into an equivalent term $t'$ that is in ``standard form'' (terms $t$ and $t'$ are \emph{equivalent} if $(G,*)$ satisfies $t \approx t'$, i.e., $t^* = (t')^*$).
It thus follows that $\spac{*}$, i.e., the number of term functions induced by full linear terms with $n$ variables on $(G,*)$, is bounded above by the number of terms in standard form, so it is then a matter of counting the possible standard forms.
Similarly, finding $\spa{*}$ amounts to counting the standard forms that can be obtained from bracketings.

Let $t$ be a linear term.
Assume that $\var(t) = \{x_{i_1}, \dots, x_{i_m}\}$ and that $x_{i_k}$ occurs to the left from $x_{i_\ell}$ in $t$ if and only if $k < \ell$.
Assume further that $\{j_1, \dots, j_m\} = \{i_1, \dots, i_m\}$ and $j_1 < j_2 < \dots < j_m$.
Let
\begin{align*}
t^\mathrm{L} &:= [ x_{i_1}, \dots, x_{i_m} ],  &
t^{\mathrm{L}\mathord{<}} &:= [ x_{j_1}, \dots, x_{j_m} ],  \\
t^\mathrm{R} &:= \langle x_{i_1}, \dots, x_{i_m} \rangle,  &
t^{\mathrm{R}\mathord{<}} &:= \langle x_{j_1}, \dots, x_{j_m} \rangle,
\end{align*}
i.e., $t^\mathrm{L}$ and $t^{\mathrm{L}\mathord{<}}$ ($t^\mathrm{R}$ and $t^{\mathrm{R}\mathord{<}}$, resp.) are leftmost (rightmost, resp.) bracketings of the variables of $t$; in the former, the variables occur in the same order as in $t$, while in the latter, the variables occur in the increasing order of the indices.

The next lemma will be frequently used to establish our main results.

\begin{lemma}\label{lem:t}
Let $(G,*)$ be a groupoid, and write an arbitrary term in $\F_n$ as $t = [t_0, t_1, \dots, t_m]$ with $|t_0| = 1$ (leftmost decomposition).
\begin{enumerate}[label={\upshape(\roman*)}]
\item\label{lem:t:rightmost}
If $(G,*)$ satisfies the identity $w(x(yz)) \approx w((xy)z)$, then $(G,*)$ also satisfies the identities $t \approx [t_0, t_1^\mathrm{L}, \dots, t_m^\mathrm{L}]$ and $t \approx [t_0, t_1^\mathrm{R}, \dots, t_m^\mathrm{R}]$.
\item\label{lem:t:rightmost-increasing}
If $(G,*)$ satisfies the identities $w(x(yz)) \approx w((xy)z)$ and either $x(yz) \approx x(zy)$ or $xy \approx yx$, then $(G,*)$ also satisfies the identities $t \approx [t_0, t_1^{\mathrm{L}\mathord{<}}, \dots, t_m^{\mathrm{L}\mathord{<}}]$ and $t \approx [t_0, t_1^{\mathrm{R}\mathord{<}}, \dots, t_m^{\mathrm{R}\mathord{<}}]$.
\item\label{lem:t:order-ti}
If $(G,*)$ satisfies the identity $(xy)z \approx (xz)y$, then $(G,*)$ also satisfies the identity $t \approx [t_0, t_{\sigma(1)}, \dots, t_{\sigma(m)}]$ for every permutation $\sigma \in \SS_m$.
\item\label{lem:t:rightmost-increasing:2}
If $(G,*)$ satisfies the identities $x(yz) \approx x(zy)$ and $(xy)z \approx (xz)y$, then $(G,*)$ also satisfies the identity $t \approx [t_0, t_1^{\mathrm{L}\mathord{<}}, \dots, t_m^{\mathrm{L}\mathord{<}}]$.
\end{enumerate}
\end{lemma}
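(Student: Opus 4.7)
My plan is to prove all four parts by rewriting $t$ using the given identities, exploiting the key observation that in the leftmost decomposition $t = [t_0, t_1, \ldots, t_m]$, every part $t_i$ with $i \geq 1$ sits as the right factor of the subterm $[t_0, \ldots, t_i]$ of $t$; consequently, $t_i$ appears in a ``right-child context'' $W \cdot t_i$ (with $W = [t_0, \ldots, t_{i-1}]$), which enables local applications of identity (7) inside $t_i$.

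For part (i), I would establish the following rewriting lemma, assuming $(G,*)$ satisfies $w(x(yz)) \approx w((xy)z)$: \emph{if a term $s$ appears as the right factor of a subterm $W \cdot s$ of some larger term, then $W \cdot s \approx W \cdot s^\mathrm{L}$ and $W \cdot s \approx W \cdot s^\mathrm{R}$}. The proof proceeds by induction on $|s|$; the base $|s| \leq 2$ is trivial. For $s = (uv)$ with $|v| \geq 2$, one application of (7) in the form $W(u(v_1 v_2)) = W((uv_1)v_2)$ strictly decreases the size of the right child, and iteration reduces to the case where the right child is a single variable. To handle a nontrivial left child $u$, we apply (7) in reverse at the same position, converting $W \cdot ((u_1 u_2) x) = W \cdot (u_1 (u_2 x))$, which reexposes a right-leaning pattern and lets the induction hypothesis apply to the strictly smaller right factor $u_2 x$. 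A secondary induction on the number of internal nodes of $s$ whose right child is not a leaf shows the process terminates at $s^\mathrm{L}$; the argument for $s^\mathrm{R}$ is symmetric. Applying the lemma to $s = t_i$ for each $i$ gives part (i).

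Parts (ii) and (iii) follow in a cleaner way. For (iii), the subterm $([t_0, \ldots, t_{i-1}] \cdot t_i) \cdot t_{i+1}$ of $t$ matches $(xy)z$ with $x = [t_0,\ldots,t_{i-1}]$, $y = t_i$, $z = t_{i+1}$, so identity (3) transposes $t_i$ and $t_{i+1}$; adjacent transpositions generate $\SS_m$. For (ii), after invoking part (i), the task reduces to sorting the variables within each $t_i$ in its right-child context. If $xy \approx yx$ is available, any transposition is direct. If $x(yz) \approx x(zy)$ is available instead, we combine it with the rebracketing freedom of part (i): to swap the first two variables of $t_i^\mathrm{L} = [x_{i_1}, x_{i_2}, \ldots, x_{i_k}]$, first rebracket to a rightmost form $x_{i_1}(x_{i_2} \cdot r)$, apply (5) inside to get $x_{i_1}(r \cdot x_{i_2})$, apply (5) at the top of $W \cdot (\ldots)$ to get $(r \cdot x_{i_2}) \cdot x_{i_1}$, use (7) to rebracket as $r \cdot (x_{i_2} x_{i_1})$, apply (5) at the top once more to reach $(x_{i_2} x_{i_1}) \cdot r$, and rebracket to leftmost form; iterating through adjacent positions sorts the variables into increasing index order.

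Finally, part (iv) reduces to part (ii) once I verify that identities (3) and (5) together imply (7): the chain
\[
w(x(yz)) \stackrel{(5)}{=} w((yz)x) \stackrel{(3)}{=} w((yx)z) \stackrel{(5)}{=} w(z(yx)) \stackrel{(5)}{=} w(z(xy)) \stackrel{(5)}{=} w((xy)z)
\]
does the job (the fourth step applies (5) to the subterm $z(yx)$ to swap $y$ and $x$). With (7) and (5) both available, part (ii) then yields the claim. The main technical obstacle will be the rewriting lemma in part (i): applications of (7) at the root of $W \cdot s$ are straightforward, but controlling the interaction with left-child subterms requires care in the inductive setup and a well-chosen potential function.
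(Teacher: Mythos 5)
Your proposal is correct, and for parts (i)--(iii) it follows essentially the paper's own route: part (i) is the same iterated application of $w(x(yz)) \approx w((xy)z)$ inside a right-child context $W(\Box)$ (the paper is no more explicit about termination than you are), part (iii) is identical, and your swap of the first two variables of a right factor in part (ii) is a reshuffling of the paper's displayed chain $w(x(zy)) \approx \cdots \approx w(y(xz))$. One small imprecision: in the commutative case your claim that ``any transposition is direct'' is not literally true (commutativity alone only swaps the two children of a node), but it is harmless, since $xy \approx yx$ entails $x(yz) \approx x(zy)$ by applying commutativity to the inner product, so that case reduces to the one you handle in detail. The genuine divergence is part (iv). The paper proves it by a separate induction on $|s|$ for the auxiliary claim $xs \approx x(s^{\mathrm{L}\mathord{<}})$, culminating in the longest displayed computation in the paper, which threads $x_{i_m}, x_{i_{m-1}}, \dots$ one at a time through a rightmost bracketing. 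You instead observe that $(xy)z \approx (xz)y$ and $x(yz) \approx x(zy)$ together entail $w(x(yz)) \approx w((xy)z)$ via the chain $w(x(yz)) \approx w((yz)x) \approx w((yx)z) \approx w(z(yx)) \approx w(z(xy)) \approx w((xy)z)$; I have checked that each step is a legitimate substitution instance applied inside a context, so the hypotheses of part (iv) subsume those of part (ii) and the conclusion (indeed both the $\mathrm{L}\mathord{<}$ and $\mathrm{R}\mathord{<}$ forms) follows at once. This is a real simplification, and it has the pleasant side effect of showing that the variety of Theorem~\ref{thm:set-partition} is contained in that of Theorem~\ref{thm:ordered-set-partition}, consistent with the bounds $nB_{n-1} \le nB'_{n-1}$. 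The only cost is that your part (iv) now depends on part (ii), whereas the paper's proof of (iv) is self-contained modulo part (iii).
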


\begin{proof}
\ref{lem:t:rightmost}
We can use the identity $w(x(yz)) \approx w((xy)z)$ repeatedly to transform each $t_i$ to the form $x_j s$, where $x_j$ is the leftmost variable of $t_i$, and then apply the same procedure to $s$ to eventually transform $t_i$ into $t_i^\mathrm{R}$.
A similar argument shows that each $t_i$ can be transformed into $t_i^\mathrm{L}$.

\ref{lem:t:rightmost-increasing}
By \ref{lem:t:rightmost}, $(G,*)$ satisfies $t \approx [t_0, t_1^\mathrm{R}, \dots, t_m^\mathrm{R}]$.
We may arbitrarily permute the variables in each $t_i^\mathrm{R}$, $i \in \{1, \dots, m\}$, thanks to the identities 
\[
   w(x(zy)) \approx w(x(yz))  \approx w((xy)z) \approx w(z(xy)) \approx w(z(yx)) \approx w((yx)z) \approx w(y(xz)).
\]
Thus $(G,*)$ satisfies $t \approx [t_0, t_1^{\mathrm{R}\mathord{<}}, \dots, t_m^{\mathrm{R}\mathord{<}}]$.
A similar argument shows that
$(G,*)$ satisfies $t \approx [t_0, t_1^{\mathrm{L}\mathord{<}}, \dots, t_m^{\mathrm{L}\mathord{<}}]$.

\ref{lem:t:order-ti}
We can use the identity $(xy)z \approx (xz)y$ to swap the subterms $t_i$ and $t_{i+1}$ in $[t_0, t_1, \dots, t_m]$, for any $i \in \{1, \dots, m-1\}$.
Since the adjacent transpositions generate $\SS_m$, it follows that $(G,*)$ satisfies $t \approx [t_0, t_{\sigma(1)}, \dots, t_{\sigma(m)}]$ for every $\sigma \in \SS_m$.

\ref{lem:t:rightmost-increasing:2}
By \ref{lem:t:order-ti}, we can permute the subterms $t_1, \dots, t_m$, so it suffices to prove that $(G,*)$ satisfies $x s \approx x (s^{\mathrm{L}\mathord{<}})$ for any linear term $s$ with $x \notin \var(s)$.
We prove this by induction on $|s|$.
This is trivial when $|s| = 1$, and this holds for $|s| = 2$ by the identity $x(yz) \approx x(zy)$.
Let now $k \geq 3$, assume that the claim holds whenever $|s| < k$, and consider the case when $|s| = k$.
We have the leftmost decomposition $s = [s_0, s_1, \dots, s_\ell]$.
By the inductive hypothesis and \ref{lem:t:order-ti}, we may assume that $s_j = s_j^{\mathrm{L}\mathord{<}}$ for all $j \in \{1, \dots, \ell\}$.
Consequently, $(G,*)$ satisfies $x s \approx x (s_\ell^{\mathrm{L}\mathord{<}} u)$, where $u := [s_0, s_1^{\mathrm{L}\mathord{<}}, \dots, s_{\ell - 1}^{\mathrm{L}\mathord{<}}]$, and by the inductive hypothesis, this is equivalent to $x (s_\ell^{\mathrm{L}\mathord{<}} u^{\mathrm{L}\mathord{<}})$.
By the identity $x(yz) \approx x(zy)$, we may swap $s_\ell^{\mathrm{L}\mathord{<}}$ and $u^{\mathrm{L}\mathord{<}}$ if necessary to obtain a term of the form $x ([x_{i_{k+1}}, \dots, x_{i_m}] [x_{i_1}, \dots, x_{i_k}])$, where
$i_1 < \dots < i_k$, $i_{k+1} < \dots < i_m$ and $i_1 < i_{k+1}$.
Using the identities $x(yz) \approx x(zy)$ and $(xy)z \approx (xz)y$, we obtain
\begin{align*}
xs
&
\approx \langle x, [x_{i_{k+1}}, \dots, x_{i_m}] [x_{i_1}, \dots, x_{i_k}] \rangle
= \langle x, ([x_{i_{k+1}}, \dots, x_{i_{m-1}}] x_{i_m}) [x_{i_1}, \dots, x_{i_k}] \rangle
\\ &
\approx \langle x, ( [x_{i_{k+1}}, \dots, x_{i_{m-1}}] [x_{i_1}, \dots, x_{i_k}] ) x_{i_m} \rangle
\approx \langle x, x_{i_m} ( [x_{i_{k+1}}, \dots, x_{i_{m-1}}] [x_{i_1}, \dots, x_{i_k}] ) \rangle
\\ &
= \langle x, x_{i_m}, [x_{i_{k+1}}, \dots, x_{i_{m-1}}] [x_{i_1}, \dots, x_{i_k}] \rangle
\approx \langle x, x_{i_m}, ( [x_{i_{k+1}}, \dots, x_{i_{m-2}}] [x_{i_1}, \dots, x_{i_k}] ) x_{i_{m-1}} \rangle
\\ &
\approx \langle x, x_{i_m}, x_{i_{m-1}}, [x_{i_{k+1}}, \dots, x_{i_{m-2}}] [x_{i_1}, \dots, x_{i_k}] \rangle
\approx \cdots
\approx \langle x, x_{i_m}, x_{i_{m-1}}, \dots, x_{i_{k+2}}, x_{i_{k+1}}, [x_{i_1}, \dots, x_{i_k}] \rangle
\\ &
\approx \langle x, x_{i_m}, x_{i_{m-1}}, \dots, x_{i_{k+2}}, [x_{i_1}, \dots, x_{i_k}] x_{i_{k+1}} \rangle
= \langle x, x_{i_m}, x_{i_{m-1}}, \dots, x_{i_{k+2}}, [x_{i_1}, \dots, x_{i_k}, x_{i_{k+1}}] \rangle
\\ &
\approx \langle x, x_{i_m}, x_{i_{m-1}}, \dots, x_{i_{k+3}}, [x_{i_1}, \dots, x_{i_k}, x_{i_{k+1}}, x_{i_{k+2}}] \rangle
\approx \dots
\approx \langle x, [x_{i_1}, \dots, x_{i_k}, x_{i_{k+1}}, \dots, x_{i_m}] \rangle.
\end{align*}
Since $i_1$ is the smallest of the indices $i_1, \dots, i_m$, we can then apply the identity $(xy)z \approx (xz)y$ and part \ref{lem:t:order-ti} to sort the variables in the subterm $[x_{i_1}, \dots, x_{i_k}, x_{i_{k+1}}, \dots, x_{i_m}]$ in the increasing order of indices, and we obtain $x (x^{\mathrm{L}\mathord{<}})$, as desired.
\end{proof}

\section{Polynomial upper bounds}\label{sec:poly}

In this section, we establish some polynomial upper bounds for the ac-spectra of groupoids belonging to certain varieties of groupoids; in contrast, their associative spectra all have constant upper bounds.

For our first variety of groupoids, we can actually determine their associative spectrum and ac-spectrum.

\begin{proposition}\label{prop:275}
A groupoid $(G,*)$ with at least two elements satisfying the identity $xy\approx x$ must have $\spa{*}=1$ and $\spac{*}=n$ for $n\ge1$.
In particular, the above two equalities hold for the $2$-element groupoid $(\{0,1\},*)$ defined by $x*y:= x$ for all $x,y\in\{0,1\}$ and the $3$-element groupoids $\SC275$ and $\SC2029$.
\begin{center}
\begin{tabular}{ccc}
\begin{tabular}{c|ccc}
$*$ & $0$ & $0$ & $0$ \\
\hline
$0$ & $0$ & $0$ & $0$ \\
$1$ & $1$ & $1$ & $1$ \\
$2$ & $2$ & $2$ & $2$ 
\end{tabular}
&
\begin{tabular}{c|ccc}
$*$ & $0$ & $1$ & $2$ \\
\hline
$0$ & $0$ & $1$ & $2$ \\
$1$ & $0$ & $1$ & $2$ \\
$2$ & $0$ & $1$ & $2$ 
\end{tabular}
\\
$\SC275$ & $\SC2029$ 
\end{tabular}
\end{center}
\end{proposition}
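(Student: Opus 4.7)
The plan is to exploit the fact that the identity $xy \approx x$ forces $*$ to be \emph{left projection}, i.e., $a * b = a$ for all $a, b \in G$. Once this observation is in hand, every term function reduces to a coordinate projection, and the counting becomes elementary.

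First, I would prove by structural induction on a linear term $t$ that $t^*(a_1, \dots, a_n) = a_j$, where $x_j$ is the label of the leftmost leaf of the binary tree of $t$. The base case $|t|=1$ is trivial, and for the inductive step, writing $t = (t_1 t_2)$ one has $t^*(\vec{a}) = t_1^*(\vec{a}) * t_2^*(\vec{a}) = t_1^*(\vec{a})$ by left projection, while the leftmost leaf of $t$ coincides with that of $t_1$.

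Next, I would handle the two spectra separately. Every bracketing $t \in \B_n$ has $x_1$ as its leftmost leaf, since bracketings preserve the natural order of the variables; hence $t^*$ is the first-coordinate projection for every $t \in \B_n$, giving $\spa{*} = 1$. For the ac-spectrum, for each $i \in \{1, \dots, n\}$ there is a full linear term in $\F_n$ whose leftmost leaf is labeled $x_i$ (for instance, a leftmost bracketing starting with $x_i$), so the set $\{t^* : t \in \F_n\}$ equals the set $\{\pi_1, \dots, \pi_n\}$ of coordinate projections. Since $|G| \geq 2$, these $n$ projections are pairwise distinct, yielding $\spac{*} = n$.

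Finally, I would verify the listed examples: the 2-element groupoid with $x * y := x$ and $\SC275$ are left-projection groupoids by direct inspection of their Cayley tables, so they satisfy $xy \approx x$; the groupoid $\SC2029$ is right projection ($x * y = y$), satisfying the opposite identity, but it is anti-isomorphic to $\SC275$, and anti-isomorphic groupoids share both spectra as noted in the introduction. I do not anticipate any serious obstacle here — the entire argument hinges on the single observation that $*$ is left projection, after which everything reduces to counting the possible choices of leftmost leaf.
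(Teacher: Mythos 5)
Your proposal is correct and follows essentially the same route as the paper: the identity $xy \approx x$ makes $*$ the left projection, so every term function is the projection onto the leftmost variable, giving $\spa{*}=1$ and (since $|G|\geq 2$ makes the $n$ projections distinct) $\spac{*}=n$, with $\SC2029$ handled via anti-isomorphism to $\SC275$. Your structural induction merely spells out in more detail what the paper states in one sentence.
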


\begin{proof}
If $(G,*)$ is a groupoid with at least two elements satisfying the identity $xy\approx x$, then $\spa{*}=1$ and $\spac{*}=n$ for all $n\ge1$ since the $n$-ary operation $t^*$ induced by every term $t\in \F_n$ is determined by the leftmost variable of $t$ and distinct variables induce distinct operations.

In earlier work~\cite[Example~4.1.2]{AC-Spectrum}, we showed that the $2$-element groupoid $(\{0,1\},*)$ with $x*y:= x$ for all $x,y\in\{0,1\}$ has $\spa{*}=1$ and $\spac{*}=n$ for $n\ge1$.
One can check that $\SC275$ satisfies the identity $xy \approx x$ and that $\SC2029$ is anti-isomorphic to $\SC275$.
Thus their associative spectrum and ac-spectrum are also given as above.
\end{proof}

The upper bounds in the next result are achieved by the $3$-element groupoids $\SC7$ and $\SC28$, which are anti-isomorphic to $\SC4$ (by swapping $1$ and $2$) and $\SC5$, respectively.
\begin{center}
\begin{tabular}{ccccccc}
\begin{tabular}{c|ccc}
$*$ & $0$ & $1$ & $2$ \\
\hline
$0$ & $0$ & $0$ & $0$ \\
$1$ & $0$ & $0$ & $0$ \\
$2$ & $0$ & $1$ & $0$ 
\end{tabular}
&
\begin{tabular}{c|ccc}
$*$ & $0$ & $1$ & $2$ \\
\hline
$0$ & $0$ & $0$ & $0$ \\
$1$ & $0$ & $0$ & $0$ \\
$2$ & $0$ & $1$ & $1$ 
\end{tabular}
&
\begin{tabular}{c|ccc}
$*$ & $0$ & $1$ & $2$ \\
\hline
$0$ & $0$ & $0$ & $0$ \\
$1$ & $0$ & $0$ & $0$ \\
$2$ & $0$ & $2$ & $0$ 
\end{tabular}
&
\begin{tabular}{c|ccc}
$*$ & $0$ & $1$ & $2$ \\
\hline
$0$ & $0$ & $0$ & $0$ \\
$1$ & $0$ & $0$ & $1$ \\
$2$ & $0$ & $0$ & $1$ 
\end{tabular}
\\
$\SC4$ & $\SC5$ & $\SC7$ & $\SC28$
\end{tabular}
\end{center}

\begin{proposition}\label{prop:7}
A groupoid $(G,*)$ satisfying the identities below must have $\spa{*}\le 2$ and $\spac{*} \le n+1$ for $n=3,4,\ldots$, where the first inequality holds as an equality if so does the second and both equalities hold for $\SC7$ and $\SC28$.
\[ \mathrm{(i)}\ (xy)z\approx (xz)y, \quad
\mathrm{(ii)}\ x(yz)\approx x(zy) \approx y(xz), \quad
\mathrm{(iii)}\ w(x(yz)) \approx w((xy)z) \]
\end{proposition}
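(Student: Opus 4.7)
My plan is to follow the now-standard strategy: fix a short list of canonical forms, argue that every full linear term in $\F_n$ (and every bracketing in $\B_n$) reduces to one of them via the defining identities, and then count the canonical forms.

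First, by Lemma~\ref{lem:t}\ref{lem:t:rightmost-increasing} (applied with identity (iii) together with the half $x(yz) \approx x(zy)$ of (ii)), followed by Lemma~\ref{lem:t}\ref{lem:t:order-ti} (applied with identity (i)), each $t \in \F_n$ is equivalent to a term of shape $[t_0, t_1^{\mathrm{L}\mathord{<}}, \dots, t_m^{\mathrm{L}\mathord{<}}]$ with $|t_0| = 1$ in which the blocks $t_1^{\mathrm{L}\mathord{<}}, \dots, t_m^{\mathrm{L}\mathord{<}}$ may be permuted arbitrarily. I then propose two canonical forms. \emph{Form (A)} is the ``all-singletons'' case $m = n-1$, where the term is the leftmost bracketing $[x_{i_1}, x_{j_1}, \dots, x_{j_{n-1}}]$ with $j_1 < \dots < j_{n-1}$, so it is determined by the choice of $x_{i_1}$, giving $n$ candidates. \emph{Form (B)} is the single rightmost bracketing $\langle x_1, x_2, \dots, x_n \rangle$; the other half $x(yz) \approx y(xz)$ of (ii) swaps adjacent factors along a rightmost spine, so every rightmost bracketing of $x_1, \dots, x_n$ in any order collapses to form (B), yielding just one candidate. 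Together this will give $\spac{*} \le n+1$, and specializing to $\B_n$ (where $t_0 = x_1$ is forced) will give $\spa{*} \le 2$.

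The heart of the argument, and the main obstacle, is proving that whenever some block $t_i$ contains at least two variables, the term collapses to form (B). The key manoeuvre uses $x(yz) \approx y(xz)$ from (ii) to peel a variable off the last block: after reordering the blocks with (i) so that $|t_m| \ge 2$ and writing $t_m = t_m' \cdot y$, one obtains
\[
[t_0, \dots, t_{m-1}] \cdot (t_m' \cdot y) \;\approx\; t_m' \cdot \bigl([t_0, \dots, t_{m-1}] \cdot y\bigr) \;=\; t_m' \cdot [t_0, \dots, t_{m-1}, y],
\]
which converts one variable of $t_m$ into an entry of the leftmost spine. Iterating this peel, interleaved with uses of identity (iii) to straighten leftmost spines into rightmost ones inside an outer factor (as in Lemma~\ref{lem:t}\ref{lem:t:rightmost}) and further uses of $x(yz) \approx y(xz)$ to shuffle single variables past complex subterms (in the spirit of Lemma~\ref{lem:t}\ref{lem:t:rightmost-increasing:2}), one reduces to a term of the shape ``single variable $\cdot$ leftmost bracketing of the remaining $n-1$ variables''. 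A final application of (iii) straightens this leftmost into a rightmost, and (ii) then permutes the variables on the spine to produce $\langle x_1, \dots, x_n \rangle$. The cleanest bookkeeping is by induction on $n$, with the ``variable times leftmost'' subcase serving as the inductive base.

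For sharpness, I would verify identities (i)--(iii) directly on the Cayley tables of $\SC7$ and $\SC28$ and then produce explicit assignments that separate the $n+1$ standard forms. The $n$ terms of type (A) are distinguished by their leftmost variable, since evaluating form (A) with $i_1 = k$ at an assignment that singles out $k$ yields a value depending only on $k$; and form (B) is separated from form (A) with $i_1 = 1$ by any assignment witnessing the already-known inequality $\spa{*} \ge 2$ for these two $3$-element groupoids recorded in~\cite{AssociativeSpectra1}. Combining Steps 1--3 with sharpness on $\SC7$ and $\SC28$ completes the proof.
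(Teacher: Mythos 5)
Your upper-bound argument is essentially the paper's: the same two canonical forms (a leftmost bracketing with sorted tail, determined by its leading variable, giving $n$ candidates, plus the single sorted rightmost bracketing $\langle x_1,\dots,x_n\rangle$), and the same key manoeuvre of using $x(yz)\approx y(xz)$ to extract a variable of a non-singleton block to the front. The only real difference is bookkeeping: the paper normalizes the blocks to rightmost form $t_i^{\mathrm{R}\mathord{<}}$ via Lemma~\ref{lem:t} first, so that when $|t_m|>1$ a \emph{single} application of $x(yz)\approx y(xz)$ already produces a term headed by one variable, which Lemma~\ref{lem:t} then straightens in one pass. Your right-to-left peel applied to $[t_0,\dots,t_{m-1}]\cdot(t_m'y)$ returns $t_m'\cdot[t_0,\dots,t_{m-1},y]$, and naively re-peeling the new last block just undoes the step; the ``iterate, interleaved with (iii)'' therefore needs the same renormalization to rightmost form before each peel in order to make progress. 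That is fixable and not a different idea.

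The sharpness argument, however, has a genuine gap. To conclude $\spac{*}=n+1$ you must show that all $n+1$ canonical operations are pairwise distinct, but you only separate form (B) from form (A) with leading variable $x_1$; nothing you say excludes (B) coinciding with form (A) for some other leading variable $x_k$, which would drop the count to $n$. Moreover, \cite{AssociativeSpectra1} does not treat $\SC7$ or $\SC28$ (they are not among the groupoids whose spectra are computed there), so $\spa{*}\ge 2$ for these groupoids cannot be cited as already known. The paper closes both points at once by evaluating the canonical terms explicitly: on $\SC7$ the rightmost form is identically $0$, while the leftmost form with leading variable $x_a$ equals $2$ exactly at the assignment $x_a\mapsto 2$ and all other variables $\mapsto 1$, and equals $0$ otherwise (similarly for $\SC28$ with the roles of the nonzero values adjusted). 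This single computation separates all $n+1$ forms simultaneously; you should carry out that evaluation rather than leave it as routine.
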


\begin{proof}
Let $t$ be an arbitrary term in $\F_n$ with leftmost decomposition $t = [t_0, t_1, \ldots, t_m]$, where $t_0=x_a$ for some $a\in\{1,2,\ldots,n\}$.
By Lemma~\ref{lem:t}, we may assume that $t_i = t_i^{\mathrm{R}\mathord{<}}$ for all $i \in \{1, \dots, m\}$ and $|t_1| \leq \dots \leq |t_m|$.
We then distinguish two cases below.
\begin{itemize}
\item
If $|t_m|>1$, then $t_m = \langle x_{b_1}, \dots, x_{b_\ell} \rangle$ and we can apply (ii) to swap the leftmost variable of $t_m$ and $[x_a, t_1, \ldots, t_{m-1}]$. 
The resulting term $x_{b_1} \langle [x_a, t_1, \ldots, t_{m-1}], x_{b_2}, \dots, x_{b_\ell} \rangle$ can be transformed to $\langle x_{b_1}, x_1, \dots, x_{b_1 - 1}, x_{b_1 + 1}, \dots, x_n \rangle$ by Lemma~\ref{lem:t}.
Then we can apply (ii) to swap $x_{b_1}$ with $x_1$, and finally we can turn the term into $\langle x_1, \ldots, x_n \rangle$.
\item
If $|t_m|=1$, then $t=[x_a, x_{b_1}, \ldots, x_{b_{n-1}}]$, and we can apply (i) to make sure $b_1<\cdots<b_{n-1}$.
\end{itemize}
It follows that $\spac{*}\le n+1$ since there are $n$ possibilities for $a$ in the second case.
If the variables $x_1, \ldots, x_n$ are ordered increasingly in $t$, then we must have $a=1$ in the second case. 
Thus $\spa{*}\le 2$.
If $\spac{*}=n+1$, then the two cases above cannot yield identical $n$-ary operations on $(G,*)$, and thus $\spa{*}=2$.

Now we determine $\spa{*}$ and $\spac{*}$ for $\SC7$.
It is routine to check that $\SC7$ satisfies the identities (i), (ii), and (iii).
Let $t$ be an arbitrary term in $\F_n$.
We may assume that $t= \langle x_1, \ldots, x_n\rangle$ or $t = [x_a, x_{b_1}, \ldots, x_{b_{n-1}}]$ with $b_1<\cdots<b_{n-1}$ by the above argument.
For the former, we have $h(t) = 0$ for all $h: X_n\to\{0,1,2\}$.
For the latter, we have $h(t) = 2$ if $h(x_a)=2$ and $h(x_{b_1})=\cdots=h(x_{b_{n-1}})=1$ or $h(t)=0$ otherwise.
Therefore $\spac{*}=n+1$, which implies $\spa{*}=2$. 

In a similar way, we can determine $\spa{*}$ and $\spac{*}$ for $\SC28$, which also satisfies the identities (i), (ii), and (iii).
If $t= \langle x_1, \ldots, x_n\rangle$, then $h(t) = 0$ for all $h:X_n\to\{0,1,2\}$.
If $t = [x_a, x_{b_1}, \ldots, b_{n-1}]$, then $h(t) = 1$ if $h(x_a)\in\{1,2\}$ and $h(x_{b_i})=2$ for $i=1,\ldots,n-1$, or $h(t)=0$ otherwise.
It follows that $\spac{*}=n+1$, which implies $\spa{*}=2$.
\end{proof}

The upper bounds in the next result are very close to but not the same as those in Proposition~\ref{prop:7}.

\begin{proposition}\label{prop:405}
A groupoid $(G,*)$ satisfying the identities below must have upper bounds $\spa{*}\le 2$ and $\spa{*} \le 3$ for $n=3$ and $\spa{*} \le 3$ and $\spac{*} \le n+1$ for $n=4,5,\ldots$.
\[ \mathrm{(i)}\ xy \approx yx, \quad
\mathrm{(ii)}\ w(x(yz)) \approx w((xy)z), \quad
\mathrm{(iii)}\ (v(wx))(yz) \approx (vw)(x(yz)) \]
If $\spac{*}$ reaches its upper bound, so does $\spa{*}$, and both upper bounds are reached by $\SC405$ (see Table~\ref{tab:3-element groupoids}).
\end{proposition}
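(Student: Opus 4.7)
The plan is to mirror the standard-form strategy of Proposition~\ref{prop:7}. Given an arbitrary $t \in \F_n$, I would write $t = [t_0, t_1, \ldots, t_m]$ in leftmost decomposition with $t_0 = x_a$, and apply Lemma~\ref{lem:t}\ref{lem:t:rightmost-increasing}, which is available because we have both the commutativity identity (i) and the identity (ii); this lets me replace each $t_i$ (for $i \geq 1$) by its rightmost-increasing form $t_i^{\mathrm{R}\mathord{<}}$.

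Next I would case-split on the structure at the root. In the first case, the root has a leaf child (either $m = 1$, or $m \geq 2$ with $|t_m| = 1$); by commutativity one may then write $t \approx x_j \cdot s$ for some $j$, where $s$ is compound on the remaining $n-1$ variables, and Lemma~\ref{lem:t}\ref{lem:t:rightmost-increasing} normalizes $s$ to $\langle x_{b_1}, \ldots, x_{b_{n-1}} \rangle$ with $b_1 < \cdots < b_{n-1}$. This yields $n$ standard forms, one per $j \in \{1, \ldots, n\}$. In the second case, both children of the root are compound; here I would invoke the identity~(iii) $(v(wx))(yz) \approx (vw)(x(yz))$ together with~(i) and~(ii) to show that every such ``compound-compound'' term collapses to a single canonical form, e.g.\ $(x_1 x_2)\cdot \langle x_3, \ldots, x_n \rangle$. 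The key observation is that (iii), combined with commutativity at the root, lets one rotate a single variable across the dividing point of the two compound children of the root, so that by iteration every partition of $\{x_1, \ldots, x_n\}$ into two pieces of size $\geq 2$ leads to the same standard form. Adding the two cases gives $n + 1$ standard forms and hence $\spac{*} \le n + 1$.

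For the associative spectrum, bracketings have variables in the fixed order $x_1, \ldots, x_n$, so the leaf child of the root in the first case must be either $x_1$ or $x_n$; this yields the two standard forms $\langle x_1, x_2, \ldots, x_n \rangle$ (from $j = 1$) and $[x_1, \ldots, x_{n-1}] \cdot x_n$ (from $j = n$, after commuting at the root). Together with the single standard form from the compound-compound case, we obtain at most three bracketing standard forms, hence $\spa{*} \le 3$ for $n \ge 4$. For $n = 3$ the compound-compound case is vacuous (three variables cannot split as two compound subterms), so we recover $\spa{*} \le 2$ and $\spac{*} \le 3$.

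Finally, to verify that the bounds are achieved by $\SC405$, I would check directly from the Cayley table that $\SC405$ satisfies (i), (ii), and (iii), and then compute its term functions. Writing $f(a) := [a = 2]$, one sees that $a*b \in \{0,1\}$ always and $a*b = f(a) \oplus f(b)$, so the value of $t^*(a_1, \ldots, a_n)$ for any compound $t$ depends only on which of the root's two children are leaves: if both are compound the output is constantly $0$, while if exactly one is a leaf $x_j$ the output is $f(a_j)$. This yields exactly $n+1$ distinct $n$-ary operations for $n \ge 4$ and exactly three distinct operations induced by bracketings, matching the upper bounds. The main obstacle I anticipate is the compound-compound collapse step, where one has to thread identity~(iii) through the pairwise rotations carefully to confirm that all partitions of the variable set are reached from a single canonical representative.
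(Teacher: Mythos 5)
Your proposal follows the paper's proof almost step for step: the same leftmost decomposition and Lemma~\ref{lem:t} normalization, the same dichotomy between a leaf child at the root and a compound--compound root, the same count of $n$ leaf-rooted standard forms (the paper records these as two families, $x_{i_1}\langle x_{i_2},\dots,x_{i_n}\rangle$ and $\langle x_{i_1},\dots,x_{i_{n-1}}\rangle x_{i_n}$, merged by identity~(i)), and the same evaluation of $\SC405$ via $a*b=f(a)\oplus f(b)$ with $f(a)=\lfloor a/2\rfloor$, which is correct. So everything hinges on the one step you yourself flag as the obstacle: the collapse of all compound--compound terms to $(x_1x_2)\langle x_3,\dots,x_n\rangle$. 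For $n\ge 5$ your ``rotate one variable across the divide'' plan is exactly what the paper executes: identity~(iii) with $v=x_{i_1}$, $w=x_{i_2}$, $x=\langle x_{i_3},\dots,x_{i_k}\rangle$, $y=x_{i_{k+1}}$, $z=\langle x_{i_{k+2}},\dots,x_{i_n}\rangle$ first shrinks one block to size two, and then a cycle of (iii), (i), and Lemma~\ref{lem:t} moves $x_1$, then $x_2$, and so on into position.

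The genuine gap is at $n=4$, and it cannot be threaded through with more care: both sides of identity~(iii) contain five disjoint variable slots, so any substitution instance of either side has at least five leaves. Since (i)--(iii) are linear and balanced, every term in a derivation starting from $t\in\F_4$ stays in $\F_4$, hence no such term contains an instance of (iii) at all, and only (i) and (ii) are usable; moreover (ii) cannot apply to a $(2,2)$-split term either (its instances have a right child with at least three leaves), so (i) alone acts there and preserves the pair partition. Consequently $(x_1x_2)(x_3x_4)$, $(x_1x_3)(x_2x_4)$, and $(x_1x_4)(x_2x_3)$ lie in three distinct equivalence classes, giving $4+3=7>n+1=5$ candidate standard forms; worse, one can build a groupoid in the variety on which these three terms induce genuinely distinct operations (take atoms, unordered pairs of atoms, and ``matchings'' $P*P'$ of two pairs, with all other products sent to absorbing values; identities (i)--(iii) hold because each side of (ii) and (iii) evaluates into the absorbing part), so the claimed bound $\spac{*}\le n+1$ itself fails at $n=4$ for the variety, not just for your method. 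To be fair, you have faithfully reproduced the paper's own argument including this weak point: the paper's first application of (iii) to $\langle i_1,i_2\rangle\langle i_3,i_4\rangle$ is likewise not a legal instance when $n=4$. Your verification for $\SC405$ is unaffected, since there every compound--compound term induces the constant $0$ (which is an extra identity of $\SC405$ not derivable from (i)--(iii)), so $\spac{*}=n+1$ does hold for $\SC405$ at $n=4$; but the variety-wide upper-bound argument is valid only from $n=5$ on, and $n=4$ must be treated separately.
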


\begin{proof}
Let $t$ be an arbitrary term in $\F_n$ with leftmost decomposition $t = [t_0, t_1, \ldots, t_m]$, where $|t_0|=1$.
By Lemma~\ref{lem:t}, we can assume that $t_i = t_i^{\mathrm{R}\mathord{<}}$ for all $i \in \{1, \dots, m\}$.
We can use (i) to swap $[t_0, t_1, \ldots, t_{m-1}]$ and $t_m$.
The resulting term $t_m [t_0, t_1, \ldots, t_{m-1}]$ can be transformed to
\[ \langle x_{i_1}, \ldots, x_{i_k} \rangle \langle x_{i_{k+1}}, \ldots, x_{i_n} \rangle, \]
by Lemma~~\ref{lem:t} and (i) again, where $\{x_{i_1}, \dots, x_{i_k}\} = \var([t_0, t_1, \dots, t_{m-1}])$ and $\{x_{i_{k+1}}, \ldots, x_{i_n}\} = \var(t_m)$ with $i_1 < \dots < i_k$ and $i_{k+1} < \dots < i_n$.
Note that $i_j = j$ for $j = 1, \ldots, n$ if $t \in \B_n$.
If $k=2$, then we can show that 
\[ (\langle x_{i_1}, x_{i_2} \rangle \langle x_{i_3}, \ldots, x_{i_n} \rangle)^* = (\langle x_1, x_2 \rangle \langle x_3, \ldots, x_n \rangle)^*. \]
We have either $i_1 = 1$ or $i_3 = 1$.
If $i_3 = 1$, then we can do the following transformations to make the leftmost index $1$.
\begin{multline*}
\langle i_1, i_2 \rangle \langle i_3, \ldots, i_n\rangle 
\xrightarrow{\text{(iii)}} 
\langle i_1, i_2, i_3 \rangle \langle i_4, \ldots, i_n\rangle 
\xrightarrow{\text{(i)}} 
\langle i_4, \ldots, i_n\rangle \langle i_1, i_2, i_3 \rangle
\\
\xrightarrow{\text{Lemma~\ref{lem:t}}} 
\langle i_4, \ldots, i_n\rangle \langle i_3, i_1, i_2 \rangle
\xrightarrow{\text{(i)}} 
\langle i_3, i_1, i_2 \rangle \langle i_4, \ldots, i_n \rangle
\xrightarrow{\text{(iii)}} 
\langle i_3, i_1 \rangle \langle i_2, i_4, \ldots, i_n \rangle 
\end{multline*}
Here we drop $x$ for ease of notation and represent an application of an identity by an arrow with the label of the identity above it.
Similarly, we can make the second leftmost index $2$ and then make the rest $3, \ldots, n$.
If $3\le k \le n-2$, then we have 
\[
\langle i_1, \ldots, i_k \rangle \langle i_{k+1}, \ldots, i_n\rangle 
\xrightarrow{\mathrm{(iii)}} 
\langle i_1, i_2 \rangle ( \langle i_3, \dots, i_k \rangle \langle i_{k+1}, \dots, i_n \rangle )
\xrightarrow{\text{Lemma~\ref{lem:t}}} 
\langle i_1, i_{2} \rangle \langle i_{3}, \ldots, i_n\rangle.
\]
Here the application of (iii) uses $v=x_{i_1}$, $w=x_{i_2}$, $x=\langle x_{i_3}, \ldots, x_{i_k} \rangle$, $y=x_{i_{k+1}}$, and $z=\langle x_{i_{k+2}}, \ldots, x_{i_n} \rangle$.
Thus $t$ induces the same $n$-ary operation on $(G,*)$ as one of the following ``standard'' terms.
\[
x_{i_1} \langle x_{i_2}, \ldots, x_{i_n} \rangle, \quad
\langle x_{i_1}, \ldots, x_{i_{n-1}} \rangle x_{i_n}, \quad
\langle x_1, x_2 \rangle \langle x_3, \ldots, x_n \rangle
\]

The first standard term is determined by $i_1$ since $i_2<\cdots<i_n$, and the second is determined by $i_n$ since $i_1<\cdots<i_{n-1}$.
Moreover, $x_{i_1} \langle x_{i_2}, \ldots, x_{i_n} \rangle$ and $\langle x_{i_1}, \ldots, x_{i_{n-1}} \rangle x_{i_n}$ induce the same $n$-ary operation on $(G,*)$ if $i_1=i_n$ by (i).
Thus there are $n$ possibilities in total for the first two standard terms.
On the other hand, the last standard term $\langle x_1, x_2 \rangle \langle x_3, \ldots, x_n \rangle$ does not occur when $n=3$.
Thus $\spac{*}\le 3$ when $n=3$ and $\spac{*} \le n+1$ for $n\ge4$.

If $t\in\B_n$ is a bracketing of $x_1, \ldots, x_n$, then by the above argument, it induces the same $n$-ary operation on $(G,*)$ as one of  $x_1 \langle x_2, \ldots, x_n \rangle$, $\langle x_1, \ldots, x_{n-1} \rangle x_n$, or $\langle x_1, x_2 \rangle \langle x_3, \ldots, x_n \rangle$.
Thus $\spa{*}\le 2$ for $n=3$ and $\spa{*}\le 3$ for $n\ge4$.
It is easy to see that the equality holds in the upper bound for $\spa{*}$ when the equality holds in the upper bound for $\spac{*}$.

Now we consider $\SC405$.
Write an arbitrary term $t\in \F_n$ as $t=(t_L)(t_R)$, where $t_L$ and $t_R$ are linear terms.
Also view $t$ as a bracketing of $x_{i_1}, \ldots, x_{i_n}$.
We distinguish the following cases on $|t_L|$ and $|t_R|$.
\begin{itemize}
\item[(i)]
If $|t_L|=1<|t_R|$ then $t_R^*$ evaluates to $0$ or $1$, so $t^*(a_1, \dots, a_n) = \lfloor a_{i_1}/2 \rfloor$.
\item[(ii)]
If $|t_L|>1=|t_R|$ then $t^*(a_1, \dots, a_n) = \lfloor a_{i_n}/2 \rfloor$ for the same reason as above.
\item[(iii)]
If $|t_L|\ge2$ and $|t_R|\ge2$ then $t_L^*$ and $t_R^*$ both evaluate to $0$ or $1$, so $t^*$ is always zero.
\end{itemize}

For $n=3$ we must have (i) or (ii), so $t^*(a_1, a_2, a_3) = \lfloor a_i / 2 \rfloor$, where $i$ varies in $\{1,2,3\}$. Thus $\spac{*} = 3$ for $n=3$.
For $n\ge4$, we have $t^*(a_1, \dots, a_n) = \lfloor a_i / 2 \rfloor$, where $i$ varies in $\{1,2,\ldots,n\}$, or $t^* = 0$.
Thus $\spac{*} = n+1$ for $n\ge4$.
\end{proof}

The next result involves the $3$-element groupoid $\SC189$, which is anti-isomorphic to $\SC170$.
\begin{center}
\begin{tabular}{ccccccc}
\begin{tabular}{c|ccc}
* & 0 & 1 & 2 \\
\hline
0 & 0 & 0 & 0 \\
1 & 0 & 2 & 1 \\
2 & 0 & 2 & 1 
\end{tabular}
&
\begin{tabular}{c|ccc}
* & 0 & 1 & 2 \\
\hline
0 & 0 & 0 & 0 \\
1 & 0 & 2 & 2 \\
2 & 0 & 1 & 1 
\end{tabular}
\\
$\SC170$ & $\SC189$
\end{tabular}
\end{center}

\begin{proposition}\label{prop:189}
A groupoid $(G,*)$ satisfying the identities below must have $\spa{*} \le 2$ and $\spac{*}\le 2n$ for $n=3,4,\ldots$, where the first inequality holds as an equality whenever the second does and both hold for the $2$-element groupoid $(\{0,1\},*)$ defined by $x*y:= x+1 \pmod 2$ for all $x,y\in\{0,1\}$ and the $3$-element groupoids $\SC170$ and $\SC189$.
\[ \mathrm{(i)}\ x(yz) \approx x(zy), \quad
\mathrm{(ii)}\ (xy)z \approx (xz)y, \quad
\mathrm{(iii)}\ w(x(yz)) \approx w((xy)z), \]
\[ \mathrm{(iv)}\ (wx)(yz) \approx (w(xy))z, \quad 
\mathrm{(v)}\ w(x(yz)) \approx ((wx)y)z \]
\end{proposition}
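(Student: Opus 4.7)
The plan is to reduce every term in $\F_n$ to one of at most $2n$ canonical forms using identities (i)--(v), and then to verify the lower bound by direct computation on the three sample groupoids. The invariant to be preserved is the pair (leftmost leaf index, parity of its left depth). For each $a \in \{1, \ldots, n\}$ I aim for two canonical forms: an ``odd'' form $T_a^{\mathrm{o}} := x_a \langle x_{b_1}, \ldots, x_{b_{n-1}} \rangle$ with $\{b_1 < \cdots < b_{n-1}\} = \{1, \ldots, n\} \setminus \{a\}$, and an ``even'' form $T_a^{\mathrm{e}} := (x_a x_{c_1}) \langle x_{c_2}, \ldots, x_{c_{n-1}} \rangle$ with $c_1 = \min(\{1, \ldots, n\} \setminus \{a\})$ and the remaining indices in increasing order. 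Restricting to bracketings forces $a = 1$, giving the bound $\spa{*} \le 2$.

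Starting from a leftmost decomposition $t = [x_a, t_1, \ldots, t_m]$, the core of the reduction is the repeated application of (v) in reverse, $((wx)y)z \approx w(x(yz))$, to the outermost four levels of the left spine. An induction on $k$ shows that after $k$ steps the term has the form $[x_a, t_1, \ldots, t_{m-2k-1}] \cdot \langle t_{m-2k}, \ldots, t_m \rangle$: the top of the term at step $k$ matches $((A'' \cdot t_{m-2k-2}) t_{m-2k-1}) B$ against $((wx)y)z$, the new right factor becomes $t_{m-2k-2}(t_{m-2k-1} B)$, and this is again a rightmost bracket by definition of $\langle \cdot \rangle$, now with two more entries than before. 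The iteration halts at $x_a \cdot \langle t_1, \ldots, t_m \rangle$ when $m$ is odd and at $(x_a t_1) \cdot \langle t_2, \ldots, t_m \rangle$ when $m$ is even. In the odd case, Lemma~\ref{lem:t}\ref{lem:t:rightmost-increasing} applied to the whole term (leftmost decomposition $[x_a, S]$) sorts $S$ into a rightmost bracket of the $n-1$ individual variables, yielding $T_a^{\mathrm{o}}$. In the even case, I first use (iv) in reverse, $(w(xy))z \approx (wx)(yz)$, to peel variables out of $t_1$ into the right factor until $|t_1| = 1$; then Lemma~\ref{lem:t}\ref{lem:t:rightmost-increasing} sorts the right factor; and finally the swap
\[
(x_a x_b)(x_c S) \overset{\mathrm{(iv)}}{\approx} (x_a(x_b x_c)) S \overset{\mathrm{(i)}}{\approx} (x_a(x_c x_b)) S \overset{\mathrm{(iv)}}{\approx} (x_a x_c)(x_b S)
\]
is iterated to push the smallest admissible index into position $c_1$, yielding $T_a^{\mathrm{e}}$.

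For the lower bound on $\SC189$, a direct induction on term depth using its Cayley table gives $t^*(a_1, \ldots, a_n) = 0$ whenever some $a_i = 0$ and $t^*(a_1, \ldots, a_n) = f^{\ld_j(t)}(a_j)$ otherwise, where $j$ is the leftmost leaf index of $t$ and $f$ is the transposition $1 \leftrightarrow 2$ (with $f(0) = 0$). Since $f$ has order $2$, the operation depends only on the pair $(j, \ld_j(t) \bmod 2)$; for $n \ge 3$ the canonical terms $T_j^{\mathrm{o}}$ and $T_j^{\mathrm{e}}$ realize all $2n$ such pairs, so $\spac(\SC189) = 2n$, and $\spa(\SC189) = 2$ since bracketings fix $j = 1$. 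The computation for $\SC170$ is dual (rightmost leaf and right depth), and for the two-element groupoid with $x * y = x + 1 \bmod 2$ one gets $t^* \equiv a_j + \ld_j(t) \pmod 2$, again giving $2n$ classes for $n \ge 3$. All five identities are checked in each of these three groupoids by routine case analysis.

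The main obstacle is managing the iterated (v)-reduction: at each step I must verify that the current term still exhibits the $((wx)y)z$-shape the next application needs, and that the rightmost bracket on the right grows by exactly two entries per step. The even-case peeling via (iv) and the subsequent variable swap via (iv)--(i)--(iv) is the second delicate piece; everything else---Lemma~\ref{lem:t} invocations and the direct evaluations on the sample groupoids---is bookkeeping.
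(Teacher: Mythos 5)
Your proof is correct and follows essentially the same route as the paper: identities (i) and (iii) (via Lemma~\ref{lem:t}) sort the blocks into rightmost brackets, (v) collapses the leftmost decomposition to $m\le 2$, and (iv) together with (ii)/(i) yields the same $2n$ standard forms $x_a\langle\cdots\rangle$ and $(x_ax_{c_1})\langle\cdots\rangle$. The only difference is presentational: you spell out the (v)-iteration and the variable swap explicitly, and you verify sharpness for $\SC189$ by a direct closed-form description of its term operations (leftmost leaf and parity of its left depth), whereas the paper cites the known spectra of the $2$-element groupoid $x*y=x+1 \bmod 2$ and observes that $\SC189$ is that groupoid with an absorbing element adjoined.
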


\begin{proof}
Let $t$ be an arbitrary term in $\F_n$ with leftmost decomposition $t = [t_0, t_1, \ldots, t_m]$, where $|t_0| = 1$.
By (i) and (iii) and Lemma~\ref{lem:t}, we may assume that $t_i = t_i^{\mathrm{R}\mathord{<}}$ for all $i \in \{1, \dots, m\}$.
By (v), we may assume $m \leq 2$.
If $m=1$ then $t^* = \langle x_{i_1}, \ldots, x_{i_n} \rangle^*$ with $i_2<\cdots<i_n$.
If $m=2$ then we can further use (iv) to obtain $t^* = [x_{i_1}, x_{i_2}, \langle x_{i_3}, \ldots, x_{i_n} \rangle]^*$ and make sure $i_2<\cdots<i_n$ by (ii). 
Thus $\spac{*}\le 2n$.

If $t$ is a bracketing of $x_1, \ldots, x_n$, then $t^* = \langle x_1, \ldots, x_n \rangle^*$ or $t^* = [x_1, x_2, \langle x_3, \ldots, x_n \rangle]^*$ by a similar argument.
Thus $\spa{*} \le 2$, and the equality must hold if $\spac{*}=2n$.

It is routine to check that the $2$-element groupoid $(\{0,1\},*)$ defined by $x*y:= x+1 \pmod 2$ for all $x,y\in\{0,1\}$ satisfies the identities (i)--(v).
It has $\spa{*}=2$ for $n\ge2$ by Cs\'{a}k\'{a}ny and Waldhauser~\cite[\S4.1]{AssociativeSpectra1} and $\spac{*}=2n$ for $n\ge3$ by our earlier work~\cite[Example~4.1.2]{AC-Spectrum}.
It is easy to see that $\SC189$ is obtained from this $2$-element groupoid by adding an absorbing element; hence the term operations behave in essentially the same ways in both groupoids.
\end{proof}

Our next result is similar to Proposition~\ref{prop:189}, and we will use leaf depths to generalize them in Section~\ref{sec:depth}.
The result here involves two anti-isomorphic groupoids:
\begin{center}
\begin{tabular}{ccc}
\begin{tabular}{c|ccc}
$*$ & $0$ & $1$ & $2$ \\
\hline
$0$ & $1$ & $1$ & $1$ \\
$1$ & $2$ & $2$ & $2$ \\
$2$ & $0$ & $0$ & $0$ 
\end{tabular}
&
\begin{tabular}{c|ccc}
$*$ & $0$ & $1$ & $2$ \\
\hline
$0$ & $1$ & $2$ & $0$ \\
$1$ & $1$ & $2$ & $0$ \\
$2$ & $1$ & $2$ & $0$ 
\end{tabular}
\\
$\SC3242$ & $\SC3302$
\end{tabular}
\end{center}

\begin{proposition}\label{prop:3242}
A groupoid $(G,*)$ satisfying the identities below must have
\[ \spa{*} \le 
\begin{cases} 
1 & n=1,2 \\
2 & n=3 \\
3 & n=4,5,\ldots
\end{cases} 
\quad \text{and} \quad
\spac{*} \le 
\begin{cases} 
n & n=1,2 \\
2n & n=3 \\
3n & n=4,5,\ldots
\end{cases} \]
where the first inequality holds as an equality if so does the second and both hold for $\SC3242$ and the anti-isomorphic $\SC3302$. 
\[
\mathrm{(i)}\ x(yz) \approx x(zy), \quad
\mathrm{(ii)}\ w(x(yz)) \approx w((xy)z), \quad
\mathrm{(iii)}\ (xy)z \approx (xz)y,
\]
\[
\mathrm{(iv)}\ (wx)(yz) \approx (wy)(xz), \quad
\mathrm{(v)}\ (((vw)x)y)z \approx v(w(x(yz))) \]
\end{proposition}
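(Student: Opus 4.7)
Our plan follows the template of the earlier propositions in this section: given an arbitrary term $t \in \F_n$ with leftmost decomposition $t = [x_a, t_1, \ldots, t_m]$ (so $t_0 = x_a$), we use identities (i)--(v) to transform $t$ into a standard form and then count the resulting standard forms. By identities (i) and (iii), Lemma~\ref{lem:t} lets us assume that each $t_i = t_i^{\mathrm{L}\mathord{<}}$ is a sorted leftmost bracketing, and that $t_1, \ldots, t_m$ may be permuted arbitrarily.

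Next we use identity (v) to shrink $m$. Whenever $m \geq 4$, the instantiation $v = [t_0, \ldots, t_{m-4}]$, $w = t_{m-3}$, $x = t_{m-2}$, $y = t_{m-1}$, $z = t_m$ in (v) gives
\[
 t \approx [t_0, \ldots, t_{m-4},\, t_{m-3}(t_{m-2}(t_{m-1} t_m))],
\]
a term whose leftmost decomposition has length $m - 3$. Re-sorting the new last subterm via Lemma~\ref{lem:t} and iterating, we reach $t \approx [x_a, u_1, \ldots, u_{m'}]$ with $m' \in \{0, 1, 2, 3\}$ and each $u_i$ a sorted leftmost bracketing. Since $m'$ equals the depth of the leftmost leaf $x_a$ and each application of (v) decreases this depth by exactly $3$, the final $m'$ is determined by the residue class of the original leftmost-leaf depth modulo~$3$ (subject to $m' = 0$ only when $n = 1$ and $m' = 3$ only when $n \geq 4$).

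The heart of the argument is to show that the standard form $[x_a, u_1, \ldots, u_{m'}]$ depends only on the pair $(a, m')$ and not on how the remaining $n-1$ variables are partitioned among the $u_i$. This is where identity (iv), $(wx)(yz) \approx (wy)(xz)$, does the real work. For $m' = 2$: using Lemma~\ref{lem:t} we may present the last subterm $u_2$ as either a leftmost or a rightmost bracketing of its variables, and (iv) applied at the top level of $(x_a u_1)(u_2)$ exchanges $u_1$ with a chosen ``half'' of $u_2$; after re-sorting the partition of variables has changed, and iterating shows that any two $2$-block partitions of $X_n \setminus \{x_a\}$ yield equivalent terms. For $m' = 3$ a similar argument applies (iv) inside the subterm $(x_a u_1)(u_2)$ embedded in $[x_a, u_1, u_2, u_3]$, equating all $3$-block partitions. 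Counting then gives at most $n$ choices for $a$ and at most one standard form per admissible $m'$, yielding the claimed upper bound for $\spac{*}$; restricting to bracketings of $x_1, \ldots, x_n$ in their natural order forces $a = 1$ and yields the bound for $\spa{*}$.

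Both upper bounds are attained by $\SC3242$, where $x * y = (x+1) \bmod 3$ regardless of $y$. A straightforward induction on the tree of $t$ shows $t^*(a_1, \ldots, a_n) = (a_{i_1} + d) \bmod 3$, with $x_{i_1}$ the leftmost leaf and $d$ its depth, so all admissible pairs $(i_1, d \bmod 3)$ are realized; the anti-isomorphic $\SC3302$ shares the same spectra. The main obstacle is the variable-shuffling step via (iv): permuting blocks and reshaping individual blocks are routine, but moving a specific variable between blocks of varying sizes --- especially when $m' = 3$ --- requires choosing the right decomposition of the inner subterm to expose a $(wx)(yz)$ pattern, then re-normalizing after each swap. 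Verifying this combinatorial closure, namely that all partitions of $X_n \setminus \{x_a\}$ into $m'$ nonempty blocks are mutually equivalent under the identities, is the most delicate part of the proof.
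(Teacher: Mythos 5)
Your proposal is correct and follows essentially the same route as the paper's proof: normalize via the leftmost decomposition and Lemma~\ref{lem:t}, use identity (iv) to make the distribution of the non-leftmost variables among the blocks irrelevant, use identity (v) to reduce the number of blocks modulo $3$, count the $n\cdot k$ standard forms determined by the leftmost variable and the residue, and evaluate $\SC3242$ via $t^*(a_1,\dots,a_n)=(a_{i_1}+d)\bmod 3$. The only cosmetic difference is the order of operations --- the paper first uses (iv) to collapse all but the last block to singletons (yielding the explicit canonical form $[x_{i_1},\dots,x_{i_k},\langle x_{i_{k+1}},\dots,x_{i_n}\rangle]$ with $i_2<\dots<i_n$) and only then applies (v) to force $k\le 3$, which makes the ``delicate'' partition-shuffling step you flag essentially automatic.
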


\begin{proof}
The result is trivial when $n=1,2$; assume $n\ge3$ below.
Let $t$ be an arbitrary term in $\F_n$ with leftmost decomposition $t = [t_0, t_1, \ldots, t_m]$, where $|t_0| = 1$.
By (i), (ii), and Lemma~\ref{lem:t}, we may assume that $t_i = t_i^{\mathrm{R}\mathord{<}}$ for all $i \in \{1, \dots, m\}$.
By (iii), we may assume that $|t_1| \leq \dots \leq |t_m|$.
If $|t_i| > 1$ for some $i \in \{1, \dots, m-1\}$, then we apply (iv) to make sure $|t_i|=1$.
Thus we may assume that $|t_1| = \dots = |t_{m-1}| = 1$.
Therefore, $t$ induces the same $n$-ary operation on $(G,*)$ as $[x_{i_1}, \ldots, x_{i_k}, \langle x_{i_{k+1}}, \ldots, x_{i_n} \rangle]$, where
we may further assume that $x_{i_2}<\cdots<x_{i_n}$ by (iii) and (iv) and that $k\in\{1,2,3\}$ by (v).
It follows that $\spac{*} \le 2n$ for $n=3$ (in this case $k\in\{1,2\}$) and $\spac{*}\le 3n$ for $n=4,5,\ldots$.

If $t\in\B_n$ is a bracketing of $x_1x_2\cdots x_n$, then we must have $i_1=1$ since the above argument does not alter the leftmost variable. 
Thus $\spa{*} \le 2$ for $n=3$ and $\spac{*}\le 3$ for $n=4,5,\ldots$.
It is clear that if the upper bound of $\spac{*}$ is reached, so is the upper bound of $\spa{*}$.

For $\SC3242$, we have $t^*(a_1, \dots, a_n) = ( a_{i_1} + d ) \bmod 3$ whenever the binary tree corresponding to $t \in \F_n$ has leftmost leaf $i_1$ of left depth $d$.
The number of possibilities for $i_1$ is $n$, and the number of possibilities for $d\pmod 3$ is $1$ when $n\in\{1,2\}$, $2$ when $n=3$, and $3$ when $n=4,5,\ldots$.
The proof is now complete.
\end{proof}

We next present a family of groupoids whose associative spectrum and ac-spectrum are bounded above by $1,1,2,4,4,4,4,\ldots$ and $1,2,9,32,50,72,98,\ldots$ and show that both upper bounds are reached by the $3$-element groupoid $\SC3162$, which is anti-isomorphic to $\SC2467$.
\begin{center}
\begin{tabular}{ccccccc}
\begin{tabular}{c|ccc}
$*$ & $0$ & $1$ & $2$ \\
\hline
$0$ & $1$ & $0$ & $0$ \\
$1$ & $1$ & $0$ & $0$ \\
$2$ & $1$ & $0$ & $1$ 
\end{tabular}
&
\begin{tabular}{c|ccc}
$*$ & $0$ & $1$ & $2$ \\
\hline
$0$ & $1$ & $1$ & $1$ \\
$1$ & $0$ & $0$ & $0$ \\
$2$ & $0$ & $0$ & $1$ 
\end{tabular}
\\
$\SC2467$ & $\SC3162$
\end{tabular}
\end{center}

\begin{theorem}\label{thm:3162}
A groupoid $(G,*)$ satisfying the identities below must have $\spa{*}\le 2$ and $\spac{*}\le n^2$ for $n=3$ and $\spa{*}\le 4$ and $\spac{*}\le 2n^2$ for $n=4,5,\ldots$, where the upper bound for $\spa{*}$ is reached if the upper bound for $\spac{*}$ is reached and both upper bounds are reached by $\SC3162$ and the anti-isomorphic $\SC2467$.
\[ \mathrm{(i)}\ x(yz) \approx x(zy), \quad
\mathrm{(ii)}\ w(x(yz)) \approx w((xy)z), \quad
\mathrm{(iii)}\ ((wx)y)z \approx ((wx)z)y, \]
\[ \mathrm{(iv)}\ (w(xy))z \approx (w(xz))y, \quad
\mathrm{(v)}\ v(w(x(yz))) \approx ((v(wx))y)z, \quad
\mathrm{(vi)}\ (vw)(x(yz)) \approx (((vw)x)y)z \]
\end{theorem}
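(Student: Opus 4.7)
My plan is to follow the ``normalize, count, verify'' blueprint used in the earlier results of the paper, particularly Propositions~\ref{prop:189} and~\ref{prop:3242}. Given $t\in\F_n$, write $t=[t_0,t_1,\dots,t_m]$ in leftmost decomposition with $|t_0|=1$. Lemma~\ref{lem:t}(i)--(ii), applied with identities~(i) and~(ii), lets me assume each $t_i$ (for $i\ge 1$) equals $t_i^{R<}$: the rightmost bracketing of its variables listed in increasing index order.

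Next I apply (iii)--(vi) to reduce further: (vi) unrolls a rightmost tail of length $\ge 3$ sitting at the last position, (v) handles the case $m=1$ with $|t_1|\ge 4$, and (iii)--(iv) permute indices in the final components. A direct calculation shows that each of (iii)--(vi) preserves four invariants of $t$: the leftmost leaf $x_{i_1}$; whether $|t_1|=1$ (``Class~I'', in which case the second distinguished leaf is $x_{i_2}=t_1$) or $|t_1|\ge 2$ (``Class~II''); the parity of $m$; and, in Class~I, the value of $i_2$. For each (Class, parity) combination compatible with $n$, I fix a canonical shape of $(|t_0|,\dots,|t_m|)$ and place the remaining indices in increasing order inside rightmost subterms; for example $[x_{i_1},x_{i_2},\langle x_{i_3},\dots,x_{i_n}\rangle]$ for (Class~I, even $m$) and $[x_{i_1},\langle x_{i_2},\dots,x_{i_n}\rangle]$ for (Class~II, odd $m$), with analogous shapes for the two remaining combinations (both of which require $n\ge 4$ to exist since Class~I with odd $m$ needs $m\ge 3$ and Class~II with even $m$ needs $m\ge 2$). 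Counting yields $2n(n-1)+2n=2n^2$ standard forms for $n\ge 4$ and $n(n-1)+n=n^2$ for $n=3$; restricting to $\B_n$ forces $i_1=1$ and (in Class~I) $i_2=2$, giving $\spa{*}\le 4$ for $n\ge 4$ and $\spa{*}\le 2$ for $n=3$.

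Finally, I verify that $\SC3162$ attains these bounds. Every product in $\SC3162$ lies in $\{0,1\}$ and satisfies $0*v=1$, $1*v=0$ for $v\in\{0,1\}$, so a short induction along the leftmost decomposition shows $t^*(a_1,\dots,a_n)=v_m$, where $v_k=1-v_{k-1}$ for $k\ge 2$. Hence $v_m=v_1$ when $m$ is odd and $v_m=1-v_1$ when $m$ is even, while $v_1=a_{i_1}*a_{i_2}$ in Class~I and $v_1$ equals $1$ if $a_{i_1}=0$ and $0$ otherwise in Class~II. It is then straightforward to check that distinct invariant tuples $(i_1,i_2,\text{Class},m\bmod 2)$ induce distinct term operations on $\SC3162$, so the upper bounds are sharp. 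The main obstacle is the reduction step itself: since each of (iii)--(vi) has restrictive structural preconditions (e.g., (iii) needs $m\ge 3$ at the outer level, (v) requires $|t_1|\ge 4$ in a specific rightmost configuration, (vi) requires $|t_m|\ge 3$), a careful case analysis over the shape $(|t_0|,\dots,|t_m|)$ is needed to confirm that every shape can be transformed to the chosen canonical shape of the same (Class, parity).
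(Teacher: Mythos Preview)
Your approach is essentially the same as the paper's: both normalize via the leftmost decomposition and Lemma~\ref{lem:t}\ref{lem:t:rightmost-increasing}, then reduce to four families of standard terms and count. Your organizing principle (Class~I/II together with the parity of $m$) is exactly the paper's four ``Types'': Type~1 is (Class~II, $m$ odd), Type~2 is (Class~I, $m$ even), Type~3 is (Class~II, $m$ even), and Type~4 is (Class~I, $m$ odd). The paper gets there by first using (vi) to force $m\le 3$ and then splitting on $m$ and on whether $|t_1|=1$; you instead track invariants and pick one canonical shape per class. The paper uses $t_i^{\mathrm{L}<}$ where you use $t_i^{\mathrm{R}<}$, which is immaterial by Lemma~\ref{lem:t}. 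The reduction step you flag as ``the main obstacle'' is precisely what the paper writes out case by case (in particular, in Type~4 it uses (v) to make $|t_1|=1$, (iii) to arrange $|t_2|\ge|t_3|$, and (iv) to force $|t_3|=1$), so your missing details are exactly what is supplied there.

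Where your write-up genuinely differs is the sharpness argument for $\SC3162$. The paper verifies pairwise distinctness of the four Types by exhibiting six explicit assignments. Your computation---observing that every product lies in $\{0,1\}$, that $0*v=1$ and $1*v=0$, and hence that $t^*$ equals $v_1$ or $1-v_1$ according to the parity of $m$, with $v_1=a_{i_1}*a_{i_2}$ in Class~I and $v_1=[a_{i_1}=0]$ in Class~II---is more conceptual and makes the role of your invariants transparent. It is a nice alternative to the paper's case check; just make sure to spell out that distinct pairs $(i_1,i_2)$ yield distinct binary functions $a_{i_1}*a_{i_2}$ (use, e.g., $a_{i_1}=0$ versus $a_{i_1}\ne 0$ to separate $i_1$, and $a_{i_1}=a_{i_2}=2$ to separate $i_2$), since ``straightforward'' still hides a short case analysis.
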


\begin{proof}
Let $t$ be an arbitrary term in $\F_n$ with leftmost decomposition $t = [x_a, t_1, t_2, \ldots, t_m]$. 
By (i), (iii), and Lemma~\ref{lem:t}, we may assume that $t_i = t_i^{\mathrm{L}\mathord{<}}$ for all $i \in \{1, \dots, m\}$.
By (vi), we may assume that $m \leq 3$.
Consequently, $t$ induces the same $n$-ary operation on $(G,*)$ as one of the following four types of standard terms.

\vskip5pt\noindent\textsf{Type 1}: $m=1$.
Then $t^* = (x_a [x_{b_1},\ldots,x_{b_{n-1}}])^*$, where $b_1<\cdots<b_{n-1}$.

\vskip5pt\noindent\textsf{Type 2}: $m=2$ and $|t_1|=1$.
Then $t^* = ([x_a, x_b, [x_{c_1},\ldots,x_{c_{n-2}}]])^*$, where $c_1<\cdots<c_{n-2}$.

\vskip5pt\noindent\textsf{Type 3}: $m=2$ and $|t_1|\ge2$.
Then $t^* = ([x_a, [x_{b_1}, \ldots,x_{b_{n-2}}], x_{b_{n-1}}])^*$, where $b_1<\cdots<b_{n-1}$, thanks to (iv).

\vskip5pt\noindent\textsf{Type 4}: $m=3$.
We may assume that $|t_1|=1$ by the identity (v) and that $|t_2|\ge |t_3|$ by the identity (iii).
If $|t_2|\ge |t_3|>1$, then we can write $t_2 = t'_2 x$ for any variable $x \in \var(t_2)$ and switch $x$ with $t_3$ by (iv).
Thus we may also assume $|t_3|=1$.
It follows that $t^* = ([x_a, x_b, [x_{c_1}, \ldots,x_{c_{n-3}}], x_{c_{n-2}}])^*$, where $c_1<\cdots<c_{n-2}$.

\vskip5pt
Summing up the possibilities for the above four types of standard terms, we obtain that 
\[ \spac{*}\le
\begin{cases}
n+n(n-1) = n^2 & \text{if } n=3 \\
n+n(n-1)+n+n(n-1)=2n^2 & \text{if } n=4,5,\ldots.
\end{cases} \]
If $t\in\B_n$ is a bracketing of $x_1 x_2 \cdots x_n$, then there is only one possibility in each of the above four (two when $n=3$) cases.
This shows that $\spa{*}\le 2$ for $n=3$ and $\spa{*}\le 4$ for $n=4,5,\ldots$.
If $\spac{*}=2n^2$ then the above four cases must induce distinct terms on $(G,*)$, and thus $\spa{*}=4$.

It is routine to check that $\SC3162$ satisfies the identities (i)--(vi).
It remains to show that any two distinct standard terms $t$ and $t'$ in $\F_n$ must induce distinct $n$-ary operations on $\SC3162$.
Assume that $t$ is one of the following, where $b_1<\cdots<b_{n-1}$ and $c_1<\cdots<c_{n-2}$.
\[ x_a [x_{b_1},\ldots,x_{b_{n-1}}],\
[x_a, x_b, [x_{c_1},\ldots,x_{c_{n-2}}]],\ 
[x_a, [x_{b_1}, \ldots,x_{b_{n-2}}], x_{b_{n-1}}], \ 
[x_a, x_b, [x_{c_1}, \ldots,x_{c_{n-3}}], x_{c_{n-2}}]
\]
Similarly, assume that $t'$ is one of the following, where $b'_1<\cdots<b'_{n-1}$ and $c'_1<\cdots<c'_{n-2}$.
\[ x_{a'} [x_{b'_1},\ldots,x_{b'_{n-1}}],\
[x_{a'}, x_{b'}, [x_{c'_1},\ldots,x_{c'_{n-2}}]],\ 
[x_{a'}, [x_{b'_1}, \ldots,x_{b'_{n-2}}], x_{b'_{n-1}}], \ 
[x_{a'}, x_{b'}, [x_{c'_1}, \ldots,x_{c'_{n-3}}], x_{c'_{n-2}}]
\]
It is clear that $[0,s_1,\ldots,s_\ell]$ gives $0$ if $\ell$ is even or $1$ if $\ell$ is odd, no matter what $s_1,\ldots,s_\ell$ are.
Therefore, we only need to consider the following cases.

\vskip5pt\noindent\textsf{Case 1}: $t = x_a [x_{b_1},\ldots,x_{b_{n-1}}]$ and $t' = x_{a'} [x_{b'_1},\ldots,x_{b'_{n-1}}]$, where $a\ne a'$.
We have $h(t) = 0 \ne 1 = h(t')$, where $h(x_a):=1$ and $h(x):=0$ for all $x\ne x_a$.

\vskip5pt\noindent\textsf{Case 2}: $t = x_a [x_{b_1},\ldots,x_{b_{n-1}}]$ and $t' = [x_{a'}, x_{b'}, [x_{c'_1}, \ldots,x_{c'_{n-3}}], x_{c'_{n-2}}]$.
We have $h(t) = 0 \ne 1 = h(t')$, where $h(x_a)=h(x_{a'})=h(x_{b'}):=2$ and $h(x):=0$ for all $x\notin\{x_a,x_{a'},x_{b'}\}$.
Here $a$ may coincide with $a'$ or $b'$.

\vskip5pt\noindent\textsf{Case 3}: $t = [x_{a}, x_{b}, [x_{c_1},\ldots,x_{c_{n-2}}]]$ and $t' = [x_{a'}, x_{b'}, [x_{c'_1},\ldots,x_{c'_{n-2}}]]$, where $(a,b)\ne(a',b')$.

If $a\ne a'$ then $h(t) = 0 \ne 1 = h(t')$, where $h(x_a):=0$ and $h(x):=1$ for all $x\ne x_a$.

If $a= a'$ then $b\ne b'$ and $h(t) = 0 \ne 1 = h(t')$, where $h(x_a)=h(x_b):=2$ and $h(x):=0$ for all $x\notin\{x_a,x_b\}$.

\vskip5pt\noindent\textsf{Case 4}: $t = [x_{a}, x_{b}, [x_{c_1},\ldots,x_{c_{n-2}}]]$ and $t' = [x_{a'}, [x_{b'_1}, \ldots,x_{b'_{n-2}}], x_{b'_{n-1}}]$.

If $a\ne a'$ then $h(t) = 0 \ne 1 = h(t')$, where $h(x_a):=0$ and $h(x):=1$ for all $x\ne x_a$.

If $a=a'$ then $h(t) = 0 \ne 1 = h(t')$, where $h(x_a)=h(x_b):=2$ and $h(x):=0$ for all $x\notin\{x_a,x_b\}$.

\vskip5pt\noindent\textsf{Case 5}: $t = [x_{a}, [x_{b_1}, \ldots,x_{b_{n-2}}], x_{b_{n-1}}]$ and $t' = [x_{a'}, [x_{b'_1}, \ldots,x_{b'_{n-2}}], x_{b'_{n-1}}]$, where $a\ne a'$.
We have $h(t) = 0 \ne 1 = h(t')$, where $h(x_a):=0$ and $h(x):=1$ for all $x\ne x_a$.

\vskip5pt\noindent\textsf{Case 6}: $t = [x_a, x_b, [x_{c_1}, \ldots,x_{c_{n-3}}], x_{c_{n-2}}]$ and $t' = [x_{a'}, x_{b'}, [x_{c'_1}, \ldots,x_{c'_{n-3}}], x_{c'_{n-2}}]$, where $(a,b)\ne (a',b')$.

If $a\ne a'$ then $h(t) = 1 \ne 0 = h(t')$, where $h(x_a):=0$ and $h(x):=1$ for all $x\ne x_a$.

If $a= a'$ then $b\ne b'$ and $h(t) = 1 \ne 0 = h(t')$, where $h(x_a)=h(x_b):=2$ and $h(x):=0$ for all $x\notin\{x_a,x_b\}$.

\vskip5pt
The proof is now complete.
\end{proof}

\section{Exponential upper bounds}\label{sec:expo}

In this section, we establish some exponential upper bounds for the ac-spectra for a few varieties of groupoids; the respective associative spectra may have linear or exponential upper bounds.

\begin{proposition}\label{prop:1066}
Every groupoid $(G,*)$ satisfying the identities below must have $\spa{*}\le n-1$ and $\spac{*}\le 2^{n-1}-1$ for $n=2,3,\ldots$, where the first inequality holds as an equality whenever the second does and both equalities hold for $\SC1066$ (see Table~\ref{tab:3-element groupoids}).
\[ \mathrm{(i)}\ xy \approx yx, \quad
\mathrm{(ii)}\ w(x(yz)) \approx w((xy)z) \]
\end{proposition}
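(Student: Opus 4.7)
The plan is to show that every term $t \in \F_n$ is equivalent, modulo the identities (i) $xy \approx yx$ and (ii) $w(x(yz))\approx w((xy)z)$, to a canonical form that depends only on the root partition $\{L,R\}$ of $t$, where $L$ and $R$ are the leaf sets of the two children of the root. The canonical form I aim for is $\langle x_{l_1},\dots,x_{l_k}\rangle \cdot \langle x_{r_1},\dots,x_{r_{n-k}}\rangle$, where $l_1<\dots<l_k$ enumerate $L$ and $r_1<\dots<r_{n-k}$ enumerate $R$.

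Starting from the leftmost decomposition $t=[t_0,t_1,\dots,t_m]$, I would first apply Lemma~\ref{lem:t}\ref{lem:t:rightmost-increasing} (which uses both (i) and (ii)) to obtain $t\approx[t_0,t_1^{\mathrm{R}\mathord{<}},\dots,t_m^{\mathrm{R}\mathord{<}}]$. This already places the rightmost block $t_m^{\mathrm{R}\mathord{<}}=\langle x_{r_1},\dots,x_{r_{n-k}}\rangle$ in the desired form, where $R=\var(t_m)$. To normalize the left side $A:=[t_0,t_1^{\mathrm{R}\mathord{<}},\dots,t_{m-1}^{\mathrm{R}\mathord{<}}]$, I swap the two children of the root via (i) to get $u:=t_m^{\mathrm{R}\mathord{<}}\cdot A$. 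The leftmost decomposition of $u$ is $[x_{r_1},A]$ if $|t_m|=1$ and $[x_{r_1},\langle x_{r_2},\dots,x_{r_{n-k}}\rangle,A]$ otherwise. In either case a second application of Lemma~\ref{lem:t}\ref{lem:t:rightmost-increasing} replaces $A$ by $A^{\mathrm{R}\mathord{<}}=\langle x_{l_1},\dots,x_{l_k}\rangle$ while leaving the remaining pieces unchanged (they are already rightmost-sorted in increasing order of indices), and a final commutativity at the root yields the canonical form.

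The upper bounds follow by counting. The number of unordered bipartitions of $\{1,\dots,n\}$ into two nonempty blocks is $(2^n-2)/2=2^{n-1}-1$, giving $\spac{*}\le 2^{n-1}-1$. A bracketing of $x_1x_2\cdots x_n$ must have root partition of the form $\{\{1,\dots,k\},\{k+1,\dots,n\}\}$ with $k\in\{1,\dots,n-1\}$, yielding only $n-1$ canonical forms and the bound $\spa{*}\le n-1$.

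For the matching equalities on $\SC1066$, the key observation is that the map $\phi\colon\{0,1,2\}\to\{0,1\}$ sending $2\mapsto 1$ and $0,1\mapsto 0$ is a surjective groupoid homomorphism from $\SC1066$ onto $(\ZZ/2\ZZ,+)$, and moreover the product $a*b$ in $\SC1066$ is completely determined by the pair $(\phi(a),\phi(b))$. Both identities are then immediate. Consequently, for any term $t$ with root partition $\{L,R\}$, the value $t^*(a_1,\dots,a_n)$ depends only on the unordered pair $\{\sum_{i\in L}\phi(a_i),\sum_{i\in R}\phi(a_i)\}\bmod 2$, and this unordered pair (as a function of the $a_i$) determines $\{L,R\}$: evaluating at an assignment with $\phi(a_k)=\phi(a_\ell)=1$ and $\phi(a_j)=0$ for $j\notin\{k,\ell\}$ reveals whether $k$ and $\ell$ lie in the same block or different blocks of the partition. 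Hence distinct root partitions produce distinct term functions on $\SC1066$, so both upper bounds are attained, and the implication that $\spa{*}=n-1$ whenever $\spac{*}=2^{n-1}-1$ is clear from the fact that the $n-1$ canonical forms reachable from bracketings are already pairwise distinct in that case. I expect the canonical-form reduction in the second paragraph to be the main obstacle; the counting and the verification for $\SC1066$ are straightforward once it is in hand.
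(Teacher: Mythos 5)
Your proposal is correct and follows essentially the same route as the paper's proof: reduce every term to a canonical form determined by the unordered root bipartition via Lemma~\ref{lem:t} and commutativity at the root, count the $2^{n-1}-1$ unordered bipartitions (respectively the $n-1$ prefix--suffix ones for bracketings), and separate distinct partitions on $\SC1066$ with an assignment placing the value $2$ on exactly two variables. Your homomorphism $\phi$ onto $(\ZZ/2\ZZ,+)$ is just a cleaner packaging of the paper's observation that $h(t)=2$ exactly when an odd number of variables are assigned $2$.
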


\begin{proof}
Let $t$ be an arbitrary term in $\F_n$ with leftmost decomposition $t = [x_a, t_1, t_2, \ldots, t_m]$.
By Lemma~\ref{lem:t}, we may assume that $t_i = t_i^{\mathrm{L}\mathord{<}}$ for all $i \in \{1, \dots, m\}$.
Next, we use (i) to swap $[x_a, t_1, \ldots, t_{m-1}]$ and $t_m$.
Then we transform $[x_a, t_1, \ldots, t_{m-1}]$ to a leftmost bracketing again by Lemma~\ref{lem:t}.
It follows that $t$ induces the same $n$-ary operation on $(G,*)$ as $[x_{j_1}, \ldots, x_{j_k}] [x_{j_{k+1}}, \ldots, x_{j_n}]$, where $\{x_{j_1}, \ldots, x_{j_k}\} = \var(t_m)$ and $\{x_{j_{k+1}},\ldots, x_{j_n}\} = X_n\setminus \var(t_m)$.
The order of the elements of either sets of variables does not affect $t^*$ by the above, nor does the order of the two sets by (i).
Thus $\spac{*}$ is bounded above by $(2^n-2)/2 = 2^{n-1}-1$, the number of partitions of $\{1,\ldots,n\}$ into two unordered nonempty blocks.

Restricting the above argument to bracketings of $x_1 x_2 \cdots x_n$ in $\B_n$ instead of full linear terms in $\F_n$, we have the variables in $\var(t_m)$ indexed by larger numbers than the other variables.
Thus the partitions of $\{1,\ldots,n\}$ associated with these bracketings have two blocks $\{1,\ldots, k\}$ and $\{k+1,\ldots,n\}$ for some $k\in\{1,\ldots,n-1\}$.
It follows that $\spa{*}\le n-1$.

If $\spac{*}=2^{n-1}-1$, then distinct partitions of $\{1,\ldots,n\}$ into two unordered nonempty blocks correspond to distinct $n$-ary operations on $(G,*)$, and we can restrict this to partitions with two blocks $\{1,\ldots, k\}$ and $\{k+1,\ldots,n\}$ to conclude that $\spa{*}=n-1$.

It remains to consider $\SC1066$.
Every full linear term $t\in \F_n$ can be written as $t=t_L t_R$. 
Let $h: X_n\to\{0,1,2\}$ be an assignment.
We have that $h(t) = 1$ if and only if $h(t_L)=h(t_R)=2$ and that $h(t) = 0$ if and only if $h(t_1)\ne2$ and $h(t_2)\ne 2$. 
As observed by Cs\'{a}k\'{a}ny and Waldhauser~\cite{AssociativeSpectra1}, one can show by induction that $h(t) = 2$ if and only if $h$ assigns $2$ to an odd number of variables. 
Thus $h(t)$ is completely determined by how many variables in $t_1$ and $t_2$ take the value $2$.
In particular, if $s=[x_{i_1}, \ldots, x_{i_\ell}] [x_{i_{\ell+1}}, \ldots, x_{i_n}]$ and $t = [x_{j_1}, \ldots, x_{j_k}] [x_{j_{k+1}}, \ldots, x_{j_n}]$ with $1\in \{i_1, \ldots, i_\ell\} \cap \{j_1, \ldots, j_k\}$ and $i\in \{i_1, \ldots, i_\ell\}\setminus\{j_1, \ldots, j_k\}$, then $s^*\ne t^*$ since $h(s)=0\ne 1= h(t)$, where $h(x_1)=h(x_i):=2$ and $h(x):=0$ for all $x\notin\{x_1,x_i\}$.
This implies that $\spac{*}=2^{n-1}-1$, which in turn implies $\spa{*}=n-1$.
\end{proof}

We study another variety of groupoids, for which the associative spectra have the same upper bound $n-1$ as in Proposition~\ref{prop:1066} but the ac-spectra have a different upper bound $1,2,7,29,146, \ldots$~\cite[A185109]{OEIS}.
We show that both upper bounds are reached by $\SC367$, which is anti-isomorphic to $\SC10$.
\begin{center}
\begin{tabular}{ccccccc}
\begin{tabular}{c|ccc}
$*$ & $0$ & $0$ & $0$ \\
\hline
$0$ & $0$ & $0$ & $0$ \\
$1$ & $0$ & $0$ & $0$ \\
$2$ & $1$ & $0$ & $0$ 
\end{tabular}
&
\begin{tabular}{c|ccc}
$*$ & $0$ & $1$ & $2$ \\
\hline
$0$ & $0$ & $0$ & $1$ \\
$1$ & $0$ & $0$ & $0$ \\
$2$ & $0$ & $0$ & $0$ 
\end{tabular}
\\
$\SC10$ & $\SC367$ & 
\end{tabular}
\end{center}

\begin{proposition}\label{prop:367}
A groupoid $(G,*)$ must have $\spa{*}\le n-1$ for $n=2,3,\ldots$ and
\[ \spac{*} \le  n!+\sum_{k=0}^{n-3} n(n-1)\cdots(n-k+1) 
= n! + \sum_{k=0}^{n-3} k! \binom{n}{k} \]
for $n=1,2,\ldots$ if it satisfies the identities below, where the first inequality holds as an equality whenever the second does and both equalities hold for $\SC367$ and the anti-isomorphic $\SC10$.
\[ \mathrm{(i)}\ x(yz) \approx x(zy) \approx y(xz), \quad
\mathrm{(ii)}\ w(x(yz)) \approx w((xy)z) \]
\end{proposition}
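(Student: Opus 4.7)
The plan is to standardize every term $t \in \F_n$ using the given identities and count the resulting standard forms. Since (i) contains $x(yz) \approx x(zy)$ and (ii) is $w(x(yz)) \approx w((xy)z)$, Lemma~\ref{lem:t}\ref{lem:t:rightmost-increasing} applies, so writing the leftmost decomposition $t = [x_a, t_1, \dots, t_m]$ I may assume each $t_i$ equals $t_i^{\mathrm{R}\mathord{<}}$. The crux is an additional \emph{Reduction Lemma}: using the identity $x(yz) \approx y(xz)$ from (i), any two rightmost bracketings $\langle x_{j_1}, \dots, x_{j_n}\rangle$ on the same multiset of variables are equivalent (iteratively swap adjacent leaves), and moreover whenever $|t_m| \ge 2$ the whole term $t$ reduces to such a rightmost bracketing.

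I would prove the Reduction Lemma by strong induction on $n$. Put $t_m = x_e v$ in rightmost form and set $s := [x_a, t_1, \dots, t_{m-1}]$; then $x(yz) \approx y(xz)$ with $x = s$, $y = x_e$, $z = v$ yields $t \approx x_e(sv)$. If $|v| \ge 2$, then $sv$ is a term of size $n-1$ whose leftmost decomposition ends in the non-singleton $v$, so the inductive hypothesis gives $sv \approx \langle \cdots\rangle$ and hence $t \approx x_e \langle \cdots\rangle$ is itself a rightmost bracketing of $n$ variables. The delicate sub-case is $|t_m| = 2$ and $m \ge 2$, where $v = x_f$ is a single variable: here I would expand $s = [x_a, \dots, t_{m-2}] \cdot t_{m-1}$ and apply (ii) with wrapper $w = x_e$ to rewrite $x_e(([x_a, \dots, t_{m-2}] t_{m-1}) x_f) \approx x_e([x_a, \dots, t_{m-2}](t_{m-1} x_f))$, producing a new last component $t_{m-1} x_f$ of size at least $2$ in a term of $n-1$ variables, on which the induction may be invoked.

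With the Reduction Lemma in hand, I classify standardized terms by the largest index $r \in \{0, 1, \dots, m\}$ with $|t_r| \ge 2$, using the convention $r = 0$ when $|t_i| = 1$ for every $i \ge 1$. If $r = 0$ then $t$ is pure left-nested $[x_a, x_{p_1}, \dots, x_{p_{n-1}}]$ and no further reduction is available, contributing at most $n!$ classes. If $r \ge 1$, set $k := m - r$; the Reduction Lemma applied to $[x_a, t_1, \dots, t_r]$ gives $t \approx \langle x_{i_1}, \dots, x_{i_{n-k}}\rangle \cdot x_{p_1} \cdots x_{p_k}$, with the $n-k$ variables in the right-nested part freely permutable and the trailing singletons $t_{r+1}, \dots, t_m$ equal in order to $x_{p_1}, \dots, x_{p_k}$. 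Since $|x_a| + |t_r| \ge 3$ forces $n - k \ge 3$, the values $k \in \{0, 1, \dots, n-3\}$ each contribute at most $k!\binom{n}{k}$ classes (ordered choice of trailing singletons), proving $\spac{*} \le n! + \sum_{k=0}^{n-3} k!\binom{n}{k}$. Restricting to bracketings in $\B_n$ forces the trailing singletons to be $x_{n-k+1}, \dots, x_n$ in order and the pure left-nested form to be $[x_1, x_2, \dots, x_n]$; hence each $k \in \{0, \dots, n-3\}$ contributes exactly one class and pure left-nested contributes one, giving $\spa{*} \le n-1$.

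For sharpness I would verify directly that $\SC367$ satisfies (i) and (ii): its only nonzero product is $0 * 2 = 1$, so every subterm that is itself a product takes values in $\{0, 1\}$, forcing both sides of (i) and (ii) to evaluate to $0$ under every assignment. To show the bounds are attained I exploit the recursion $h(s * u) = 1$ iff $h(s) = 0$ and $h(u) = 2$; since no product can equal $2$, the subterm $u$ must be a leaf, so any rightmost bracketing on $n \ge 3$ variables evaluates to $0$ everywhere, whereas each trailing-singleton form and each pure left-nested form admits a characteristic condition on the values at its rightmost leaf and its predecessors that can be isolated by a specific assignment, distinguishing it from every other standard form. The main obstacle I expect is the sub-case $|t_m| = 2$ in the Reduction Lemma, which requires the non-obvious combined use of (i) and (ii) sketched above.
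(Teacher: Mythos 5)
Your proposal is correct and follows essentially the same route as the paper: standardize via the leftmost decomposition into either a pure left-nested permutation term or a sorted rightmost bracketing followed by $k\le n-3$ ordered trailing singletons, count $n!+\sum_{k=0}^{n-3}k!\binom{n}{k}$, and verify sharpness on $\SC367$ using the fact that $0*2=1$ is its only nonzero product. Your Reduction Lemma (including the $|t_m|=2$ sub-case via identity (ii)) is just a more explicitly inductive packaging of what the paper does by pulling the leftmost variable of the last non-singleton component past the head with $x(yz)\approx y(xz)$ and reapplying Lemma~\ref{lem:t}; the only piece you leave as a sketch is the pairwise distinction of standard forms on $\SC367$, which the paper completes with explicit assignments sending a suffix of the trailing variables to $2$ and the rest to $0$.
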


\begin{proof}
We transform an arbitrary term $t\in F_n$, whose leftmost decomposition is $t=[t_0, t_1, \ldots, t_m]$ with $|t_0|=1$, to a ``standard'' term of the form $[ \langle x_{i_1}, \ldots, x_{i_\ell} \rangle, x_{i_{\ell+1}}, \ldots, x_{i_n} ]$, where $\ell \in \{0, 3, 4, \dots, n\}$ and $i_1 < \dots < i_\ell$.

If $|t_i|=1$ for all $i=1,\ldots,m$, then $t = [x_{i_1}, \ldots, x_{i_n}]$ is already a standard term with $\ell = 0$. 
Here $i_1, \ldots, i_n$ form a permutation of $1,\ldots,n$, and we have $n!$ possibilities in this case.

Suppose $|t_j| > 1$ for some $j$, where $j$ is as large as possible.
Then $|t_{j+1}| = \cdots = |t_m| = 1$.
We can transform $t_j$ to the rightmost bracketing of its variables in any prescribed order by (i), (ii), and Lemma~\ref{lem:t}, then switch its leftmost variable $x_{i_1}$ with $[t_0, t_1,\ldots, t_{j-1}]$ by (i), and use (i), (ii), and Lemma~\ref{lem:t} again to transform $t$ to the standard form $[\langle x_{i_1}, \ldots, x_{i_\ell} \rangle, x_{i_{\ell+1}}, \ldots, x_{i_n} ]$, where $i_1, \ldots, i_\ell$ can be in any prescribed order, say the increasing one.
There are $n(n-1)\cdots(\ell+1)$ possibilities for $i_{\ell+1}, \ldots, i_n$, and we must have $3\le \ell\le n$ since $|t_j|>1$.

Summing the numbers of possibilities in the above two cases with $k=n-\ell$ in the second case gives the desired upper bound for $\spac{*}$.
Restricting the above argument to bracketings of $x_1 x_2 \cdots x_n$ in $\B_n$, we obtain standard terms of the form $[\langle x_1,\ldots,x_\ell \rangle, x_{\ell+1}, \ldots, x_n]$ with $\ell \in\{0,3,4,\ldots,n\}$.
Thus $\spa{*}\le n-1$.
It is easy to see that if the upper bound for $\spac{*}$ is reached, so is the upper bound for $\spa{*}$.

It is clear that $\SC10$ is anti-isomorphic to $\SC367$.
The latter satisfies the identities (i) and (ii). 
It remains to show that $s^*\ne t^*$ whenever $s$ and $t$ are distinct standard terms in $\F_n$.
We may assume that $s= [\langle x_{i_1}, \ldots, x_{i_\ell} \rangle, x_{i_{\ell+1}}, \ldots, x_{i_n}]$ and $t= [ \langle x_{j_1}, \ldots, x_{j_m} \rangle, x_{j_{m+1}}, \ldots, x_{j_n}]$ for some $\ell,m\in\{0,3,4, \ldots, n\}$, where $i_1<\cdots<i_\ell$, $j_1<\cdots<j_m$, and $\ell\le m$.

First, assume that $i_k\ne j_k$ for some $k\in\{m+1,\ldots,n\}$.
Let $k$ be as large as possible.
We have $h(s) =0 \ne 1 = h(t)$ if $n-k$ is odd or $h(s) =1 \ne 0 = h(t)$ if $n-k$ is even, where $h(x_{i_{k}}) = \cdots = h(x_{i_n}) :=2$ and $h(x):=0$ for all $x\notin\{x_{i_{k}}, \ldots, x_{i_n}\}$.

Next, assume that $i_k=j_k$ for all $k=m+1, \ldots, n$.
This implies that $\ell<m$ (otherwise $s=t$).
We have $h(s) =0 \ne 1 = h(t)$ if $n-m$ is odd or $h(s) =1 \ne 0 = h(t)$ if $n-m$ is even, where $h(x_{i_m}) = \cdots = h(x_{i_n}) :=2$ and $h(x):=0$ for all $x\notin\{x_{i_m}, \ldots, x_{i_n}\}$.
\end{proof}

The upper bounds in the next result are reached by the $3$-element groupoid $\SC2302$, which can be viewed as subtraction on a finite field of three elements, or more generally, reached 
by the subtraction on any commutative group $(G,{+})$ of exponent greater than $2$ (cf. ~\cite[Example~7.1.4]{AC-Spectrum}).
It is clear $\SC2302$ is anti-isomorphic to $\SC2155$.
\begin{center}
\begin{tabular}{ccccccc}
\begin{tabular}{c|ccc}
$*$ & $0$ & $1$ & $2$ \\
\hline
$0$ & $0$ & $1$ & $2$ \\
$1$ & $2$ & $0$ & $1$ \\
$2$ & $1$ & $2$ & $0$ 
\end{tabular}
&
\begin{tabular}{c|ccc}
$*$ & $0$ & $1$ & $2$ \\
\hline
$0$ & $0$ & $2$ & $1$ \\
$1$ & $1$ & $0$ & $2$ \\
$2$ & $2$ & $1$ & $0$ 
\end{tabular}
&
\\
$\SC2155$ & $\SC2302$
\end{tabular}
\end{center}

\begin{proposition}\label{prop:subtration}
A groupoid $(G,*)$ satisfying the identities below must have $\spa{*} \le 2^{n-2}$ and $\spac{*} \le 2^n-2$ for $n=2,3,\ldots$, where the second inequality holds as an equality whenever the second does and both equalities hold for the subtraction operation $-$ on any commutative group $(G,{+})$ of exponent greater than $2$, in particular, for $\SC2302$ \textup{(}hence the anti-isomorphic $\SC2155$\textup{)}.
\[ \mathrm{(i)}\ (xy)z \approx (xz)y, \quad
\mathrm{(ii)}\ x(yz) \approx z(yx), \quad 
\mathrm{(iii)}\ w(x(yz)) \approx (w(xy))z. \]
\end{proposition}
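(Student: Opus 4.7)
The plan is to introduce the \emph{sign vector} $\varepsilon(t) \in \{\pm 1\}^n$ of a linear term $t \in \F_n$, defined by $\varepsilon_i(t) := (-1)^{\rd_{p_i(t)}(t)}$, where $p_i(t)$ is the position of the leaf labeled $x_i$ in $t$. This invariant is motivated by subtraction on a commutative group $(G,+)$: an easy induction on $n$ shows $t^*(a_1, \dots, a_n) = \sum_{i=1}^n \varepsilon_i(t)\, a_i$ when $*$ is subtraction. The first step is to verify by direct inspection of right-depths that each of (i), (ii), (iii) preserves the sign vector; for instance, in (iii) the leaves $w, x, y, z$ have right-depths $(0, 1, 2, 3)$ on the left and $(0, 1, 2, 1)$ on the right, which agree modulo~$2$. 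Consequently, $\spac{*}$ (resp.\ $\spa{*}$) is bounded above by the number of sign vectors realized by terms in $\F_n$ (resp.\ $\B_n$).

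To count these sign vectors, observe that in any tree on $n \geq 2$ leaves, the leftmost leaf has right-depth~$0$, giving at least one sign~$+1$; meanwhile, letting $E_n$ (resp.\ $O_n$) denote the set of trees on $n$ leaves all of whose right-depths are even (resp.\ odd), the mutual recurrences $E_n = \bigcup_k E_k \times O_{n-k}$ and $O_n = \bigcup_k O_k \times E_{n-k}$, together with $O_1 = \emptyset$, force $E_n = O_n = \emptyset$ for all $n \geq 2$, so at least one sign is $-1$. This gives $\spac{*} \leq 2^n - 2$. For a bracketing $t \in \B_n$, the path from the root to leaf~$1$ is all-left, so the least common ancestor of leaves~$1$ and~$2$ lies on the left spine; leaf~$2$ is therefore the leftmost leaf of that ancestor's right subtree and has right-depth~$1$. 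Hence $\varepsilon_1(t) = +1$, $\varepsilon_2(t) = -1$, leaving $\spa{*} \leq 2^{n-2}$.

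The hardest step is to upgrade the sign-vector count into the asserted upper bound on an arbitrary groupoid in the variety, i.e., to show that any two terms in $\F_n$ with the same sign vector are equivalent modulo (i), (ii), (iii). I would proceed by induction on $n$. Using Lemma~\ref{lem:t} (whose hypothesis follows from (i) and (iii) after a short derivation) and iterating (iii), one can bring the leftmost decomposition $t = [t_0, t_1, \dots, t_m]$ into a canonical shape in which each $t_j$ is a leftmost bracketing and the $t_j$'s are listed in a canonical order. Identity (ii) $x(yz) \approx z(yx)$ then allows interchanging the leftmost leaf of a subterm with the innermost leaf of a right-nested pair attached to it. Combining these moves, one brings the leaf $x_n$ to a canonical position determined by $\varepsilon_n$---the right child of the root if $\varepsilon_n = -1$, the leftmost leaf if $\varepsilon_n = +1$---splits it off, and applies the inductive hypothesis to the remaining $(n-1)$-leaf term on $\{x_1, \dots, x_{n-1}\}$. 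The main obstacle is orchestrating (ii) and (iii) so that $x_n$ reaches its canonical position without disturbing the sign vector of the rest.

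Finally, I would verify achievability on the subtraction operation of a commutative group $(G, +)$ of exponent greater than~$2$. Distinct sign vectors give distinct operations: if $\varepsilon \neq \varepsilon'$, pick~$i$ with $\varepsilon_i \neq \varepsilon'_i$, set $a_i := g$ for some $g \in G$ of order exceeding~$2$ and $a_j := 0$ for $j \neq i$, and observe the two values differ by $\pm 2g \neq 0$. Every admissible pattern is realized by an explicit construction: for a bracketing with prescribed signs $(+, -, \varepsilon_3, \dots, \varepsilon_n)$, there always exists an index $k$ such that either $k = n-1$ and $\varepsilon_n = -$, or $k \leq n-2$ with $\varepsilon_{k+1} = -$ and $\varepsilon_{k+2} = +$, so one can split $t = L * R$ recursively, with $L \in \B_k$ realizing $(\varepsilon_1, \dots, \varepsilon_k)$ and $R \in \B_{n-k}$ realizing $(-\varepsilon_{k+1}, \dots, -\varepsilon_n)$; variable relabelings in $\F_n$ then reach every sign vector with at least one $+$ and one $-$, exhausting the $2^{n-2}$ bracketing patterns and $2^n - 2$ full-linear patterns. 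In particular $\spac{-} = 2^n - 2$ and $\spa{-} = 2^{n-2}$, and the implication that $\spa{*}$ achieves its bound whenever $\spac{*}$ does follows because in that case all sign vectors give distinct operations, and the bracketing-reachable sign vectors are precisely the $2^{n-2}$ patterns with $\varepsilon_1 = +$ and $\varepsilon_2 = -$.
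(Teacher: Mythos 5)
Your sign-vector invariant is the right one---it is exactly the partition of the variables by parity of right depth that underlies the paper's standard form (and the paper's own Section~\ref{sec:depth} viewpoint)---and your counting of realizable vectors ($2^n-2$ for $\F_n$; the forced $\varepsilon_1=+$, $\varepsilon_2=-$ giving $2^{n-2}$ for $\B_n$) and the sharpness argument for subtraction are correct. But there is a genuine gap at the center of the argument. Checking that (i)--(iii) preserve the sign vector only shows that the identities \emph{cannot} merge terms with different sign vectors (a lower bound on the number of equivalence classes, useful for sharpness); it does not show that they \emph{do} merge all terms with the same sign vector, which is what the upper bound $\spac{*}\le 2^n-2$ for an arbitrary groupoid of the variety requires. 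You acknowledge this, call it the hardest step, and then leave it as a plan with an admitted unresolved obstacle, so the proposition is not actually proved.

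Worse, the plan as sketched would fail. You invoke Lemma~\ref{lem:t} to make each $t_j$ in the leftmost decomposition a leftmost bracketing, asserting that the lemma's hypothesis ``follows from (i) and (iii) after a short derivation.'' The relevant part of the lemma needs $w(x(yz))\approx w((xy)z)$, which is \emph{not} a consequence of (i)--(iii): subtraction satisfies (i)--(iii) but violates it ($z$ has right depth $3$ on one side and $2$ on the other, so by your own invariance computation no derivable identity can effect this rewriting). Only part~(iii) of Lemma~\ref{lem:t}, permuting the $t_j$'s via $(xy)z\approx(xz)y$, is available here, and consequently your proposed canonical shape (every $t_j$ a leftmost bracketing) is not reachable. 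The paper instead proves by induction on $|t|$ that every term reduces to $[x_{i_1},\dots,x_{i_k},[x_{i_{k+1}},\dots,x_{i_n}]]$: identity (iii) repeatedly peels the leftmost variable to the front, identity (ii) swaps it with the inner block, and identity (i) reorders the remaining variables; the resulting operation is then determined precisely by your sign vector. Some such explicit rewriting argument must be supplied before the upper bound stands.
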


\begin{proof}
Let $t$ be an arbitrary term in $\F_n$.
We show by induction on $|t|$ that $t$ can be transformed to a ``standard'' term $[x_{i_1}, \ldots, x_{i_k}, [x_{i_{k+1}}, \ldots, x_{i_n}]]$ for some $k\in\{1,\ldots,n-1\}$, where the sets $\{i_1, i_{k+2}, \ldots, i_n\}$ and $\{i_2, \ldots, i_{k+1}\}$ respectively contain the indices of the leftmost two variables of $t$ and either set of indices can be permuted arbitrarily.
We first write $t = [t_0, t_1, \ldots, t_m]$ with $|t_0|=1$.
We may assume that $|t_1|\ge |t_2|\ge \cdots\ge |t_m|$, thanks to the identity (i) and Lemma~\ref{lem:t}.
We distinguish some cases below.

\vskip5pt\noindent\textsf{Case 1}: $m>1$ and $|t_1|=1$.
Then $t=[x_{i_1}, \ldots, x_{i_n}]$, which is in standard form with $k = n - 1$.
The leftmost two variables of $t$ are indexed by $i_1\in \{i_1\}$ and $i_2\in \{i_2, \ldots, i_{k+1}\}$, and we can permute $i_2, \ldots, i_{k+1}$ by (i).

\vskip5pt\noindent\textsf{Case 2}: $m>1$ and $|t_1|>1$.
We can first apply (iii) repeatedly to transform $t$ to $x_{i_n} t'$, where $i_n$ is the index of the leftmost variable of $t$ and the leftmost variable of $t'$ is the second leftmost variable of $t$.
By the induction hypothesis, we may assume that $t' = [x_{i_{k+1}}, \ldots, x_{i_{n-1}}, [x_{i_1}, \ldots, x_{i_k}]]$, where $i_{k+1}, i_2, \ldots, i_k$  can be permuted in all possible ways and so can be $i_{k+2}, \ldots, i_{n-1}, i_1$, and the leftmost variable of $t'$ is indexed by one of $i_{k+1}, i_2, \ldots, i_k$.
We then apply (ii) to switch $x_{i_n}$ with $[x_{i_1}, \ldots, x_{i_k}]$ and get $[x_{i_1}, \ldots, x_{i_k}, [x_{i_{k+1}}, \ldots, x_{i_n}]]$.
By (i), we can switch $i_n$ and each of $i_{k+2},\ldots, i_{n-1}$. 
Thus we are done for this case.

\vskip5pt\noindent\textsf{Case 3}: $m=1$.
Similarly to the above case, we can apply the induction hypothesis to $t_1$ and then use (i) and (ii) to finish the argument for this case.
\vskip5pt

It follows that $\spac{*}$ is bounded above by the number of nonempty proper subsets of $\{1,\ldots,n\}$, which is clearly $2^n-2$.
Restricting the above argument to $t\in\B_n$, we must have $1\in\{i_1, i_{k+2},\ldots, i_n\}$ and $2\in\{i_2,\ldots, i_{k+1}\}$.
Thus $\spa{*}\le 2^{n-2}$; see also earlier work~\cite{CatMod}.
It is easy to see that $\spac{*}=2^n-2$ implies $\spa{*}=2^{n-2}$.

The usual subtraction $-$ on $\RR$ or $\CC$ satisfies the identities (i), (ii), and (iii).
We have $\spa{-} = 2^{n-2}$ and $\spac{-} =  2^n-2$ by previous work~\cite[Example~7.1.4]{AC-Spectrum}.
The same argument there is also valid for subtraction on any commutative group $(G,{+})$ of exponent greater than $2$ and in particular, for $\SC2302$.
\end{proof}

\begin{remark}
If $(G,{+})$ is a commutative group of exponent at most $2$, then the subtraction coincides with addition and $\spac{{-}} = 1$ for all $n \in \IN_{+}$.
\end{remark}

We provide another variety of groupoids $(G,*)$ with the same associative spectrum upper bound $2^{n-2}$ as Proposition~\ref{prop:subtration} but a different ac-spectrum upper bound $1,2,9,28,75,186,\ldots$~\cite[A058877]{OEIS}.
We show that both upper bounds are reached by two $3$-element groupoids $\SC271$ and $\SC356$, which are anti-isomorphic to $\SC1610$ (by $0\mapsto1$, $1\mapsto2$, $2\mapsto0$) and $\SC2032$ (by $0\mapsto2$, $1\mapsto0$, and $2\mapsto1$), respectively.

\begin{center}
\begin{tabular}{ccccccc}
\begin{tabular}{c|ccc}
$*$ & $0$ & $1$ & $2$ \\
\hline
$0$ & $0$ & $0$ & $0$ \\
$1$ & $1$ & $1$ & $0$ \\
$2$ & $2$ & $2$ & $2$ 
\end{tabular}
&
\begin{tabular}{c|ccc}
$*$ & $0$ & $1$ & $2$ \\
\hline
$0$ & $0$ & $0$ & $0$ \\
$1$ & $2$ & $1$ & $1$ \\
$2$ & $1$ & $2$ & $2$ 
\end{tabular}
&
\begin{tabular}{c|ccc}
$*$ & $0$ & $1$ & $2$ \\
\hline
$0$ & $0$ & $1$ & $1$ \\
$1$ & $0$ & $1$ & $2$ \\
$2$ & $0$ & $1$ & $2$ 
\end{tabular}
&
\begin{tabular}{c|ccc}
$*$ & $0$ & $1$ & $2$ \\
\hline
$0$ & $0$ & $1$ & $2$ \\
$1$ & $0$ & $1$ & $2$ \\
$2$ & $1$ & $0$ & $2$ 
\end{tabular}
\\
$\SC271$ & $\SC356$ & $\SC1610$ & $\SC2032$
\end{tabular}
\end{center}

\begin{theorem}\label{thm:A058877}
A groupoid $(G,*)$ satisfying the identities below must have $\spa{*}\le 2^{n-2}$ and $\spac{*}\le n(2^{n-1}-1)$ for $n=2,3,\ldots$, where the first upper bound is reached whenever the second one is.
\[ \mathrm{(i)}\ (xy)z\approx (xz)y \quad
\mathrm{(ii)}\ w(x(yz)) \approx w((xy)z) \quad
\mathrm{(iii)}\ (wx)(yz) \approx (wy)(xz) \]
Moreover, both upper bounds are reached for the $3$-element groupoids $\SC271$ and $\SC356$ \textup{(}hence the anti-isomorphic $\SC1610$ and $\SC2032$\textup{)}.
\end{theorem}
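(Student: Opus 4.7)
My plan is to follow the reduction-to-standard-form strategy used for the earlier propositions in the paper. Given an arbitrary full linear term $t \in \F_n$ with leftmost decomposition $t = [x_a, t_1, \ldots, t_m]$, identity (ii) together with Lemma~\ref{lem:t}\ref{lem:t:rightmost} puts each subterm $t_i$ into leftmost-bracket form, identity (i) together with Lemma~\ref{lem:t}\ref{lem:t:order-ti} permits arbitrarily permuting $t_1, \ldots, t_m$, and the medial identity (iii) is then used at the top level to shuffle variables between the two sides of a product-of-products. A key interaction is that applying (i) to the top of $(PQ)(RS)$ yields $(PQ)(RS) \approx (P(RS))Q$, so (i) and (iii) together can move a variable out of a deep subterm onto the left spine and, in particular, allow us to bound the number of non-singleton $t_i$ in the reduction.

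Using this machinery, I aim to show that each $t \in \F_n$ is equivalent to a canonical ``standard form'' determined by a pair $(x_a, S)$, where $x_a \in X_n$ is the leftmost variable and $S \subseteq X_n \setminus \{x_a\}$ is a nonempty subset encoding which variables are placed in a distinguished non-singleton subterm of the reduced tree. Because none of (i), (ii), (iii) moves the leftmost leaf, distinct $x_a$ yield non-equivalent forms; the medial identity (iii), combined with the permutation freedom from (i) and the bracketing freedom from (ii), collapses all remaining information in $t$ into the set $S$. Counting such pairs gives $n(2^{n-1}-1)$, the desired upper bound for $\spac{*}$. When $t \in \B_n$, the leftmost variable must be $x_1$ and the canonical orderings within $S$ and within its complement are predetermined, which cuts the count to $2^{n-2}$; this moreover shows that equality in the ac-bound forces equality in the associative bound.

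The final step is to verify that $\SC271$ and $\SC356$ realize both bounds. A routine finite check confirms that each satisfies (i), (ii), and (iii). Following the template of Theorem~\ref{thm:3162}, I would show that any two distinct standard forms $s, t$ induce distinct term operations by producing an assignment $h \colon X_n \to \{0,1,2\}$ with $h(s) \ne h(t)$. Since in $\SC271$ we have $a*b=a$ except $1*2=0$, an assignment that sets one critical variable to $2$ and strategically places $1$s propagates information about either the identity of $x_a$ or the membership of $S$ into $h(t)$; a parallel, slightly different analysis handles $\SC356$. The main obstacle is the combinatorial heart of the first step, namely pinning down the precise standard form and verifying that (iii), interacting with (i) and (ii), collapses the equivalence classes to be indexed by pairs $(x_a, S)$ and no finer or coarser; once this is done, the lower-bound case analysis for $\SC271$ and $\SC356$ is mechanical but will branch on the several shapes of standard form.
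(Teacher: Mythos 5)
Your overall strategy---leftmost decomposition, Lemma~\ref{lem:t} to normalize the factors, identity (i) to permute them and identity (iii) to move variables between factors, then a count of standard forms, followed by assignment-based separation for $\SC271$ and $\SC356$---is exactly the paper's. But there is a genuine gap in the one step you yourself flag as the ``combinatorial heart'': the invariant $S$ you describe is not the right one. You take $S$ to record which variables lie in a distinguished non-singleton subterm of the reduced term. That datum is too coarse, and the collapse it presupposes is not derivable from (i)--(iii). For $n=4$ the terms $(x_1x_2)(x_3x_4)$ and $(x_1x_2)(x_4x_3)$ have the same leftmost variable and the same variable set $\{x_3,x_4\}$ in their unique non-singleton factor, yet they induce distinct operations on $\SC271$: with $h(x_3)=2$ and $h=1$ elsewhere, the first evaluates to $1*2=0$ and the second to $1*(1*2)=1*0=1$. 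Since $\SC271$ satisfies (i)--(iii), no sequence of applications of these identities can identify the two terms, so a normal form indexed by your pairs $(x_a,S)$ cannot be reached; moreover, counting such pairs gives strictly fewer than $n(2^{n-1}-1)$ classes (e.g.\ $20$ rather than $28$ when $n=4$), so the arithmetic would not even come out right.

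The correct invariant, writing $t=[x_a,t_1,\dots,t_m]$, is the set $B=\{a_1,\dots,a_m\}$ of \emph{leftmost variables of the factors} $t_1,\dots,t_m$. The paper's normal form is $[x_a,x_{b_1},\dots,x_{b_{m-1}}]\langle x_{b_m},x_{c_1},\dots,x_{c_{n-m-1}}\rangle$ with $b_1<\dots<b_m$ and $c_1<\dots<c_{n-m-1}$, where $\{b_1,\dots,b_m\}=B$; note that $B$ is split across the two factors of the top-level product ($b_1,\dots,b_{m-1}$ on the left, $b_m$ heading the right factor), so it is not the variable set of any single subterm. Pairs $(a,B)$ with $B$ a nonempty subset of the remaining indices number $n(2^{n-1}-1)$, which is the bound. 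Relatedly, your derivation of $2^{n-2}$ for bracketings (``the canonical orderings are predetermined'') does not work: those orderings are already fixed in the general standard form, and forcing $a=1$ alone leaves $2^{n-1}-1$ subsets. What cuts the count to $2^{n-2}$ is the additional constraint $2\in B$, which holds because for a bracketing the leftmost variable of $t_1$ is $x_2$. Until the normal form is pinned down correctly, the separation arguments for $\SC271$ and $\SC356$ cannot be set up, so this gap is essential rather than cosmetic.
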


\begin{proof}
We transform an arbitrary term $t\in\F_n$, whose leftmost decomposition is $t = [x_a, t_1, \ldots, t_m]$, to a ``standard term'' using (i), (ii), (iii), and Lemma~\ref{lem:t}.
We may assume that $|t_1|\le |t_2|\le \cdots\le |t_m|$, thanks to the identity (i).
Let $x_{a_i}$ be the leftmost variable of $t_i$ for $i=1,\ldots,m$.
If there exists a positive integer $j<m$ such that $|t_j|>1$, we can use the identity (ii) to transform $t_{j+1}$ to $x_{a_{j+1}}t'_{j+1}$ and then use the identity (iii) to switch $t_j$ and $x_{a_{j+1}}$.
Repeating this, we obtain $[x_a, x_{b_1}, \ldots, x_{b_{m-1}}, (x_{b_m} t'_m)]$ from $t$, where $\{b_1, \ldots, b_m\} = \{a_1, \ldots, a_m\}$.
We can assume that $b_1<b_2<\cdots<b_m$, thanks to the identities (i) and (iii).
Applying the identity (ii) repeatedly to $x_{b_m} t'_m$ gives $\langle x_{b_m}, x_{c_1}, x_{c_2}, \ldots, x_{c_{n-m-1}} \rangle$. 
We may assume that $c_1<c_2<\cdots<c_{n-m-1}$ by the identities $w(x(yz)) \approx w((xy)z) \approx w((xz)y) \approx w(x(zy))$.

It follows that every $t \in \F_n$ induces the same $n$-ary operation as a standard term 
\[ [x_a, x_{b_1}, \ldots, x_{b_{m-1}}] \langle x_{b_m}, x_{c_1}, \ldots, x_{c_{n-m-1}} \rangle, \]
where $b_1<\cdots<b_m$ and $c_1<\cdots<c_{n-m-1}$.
This implies that $\spac{*} \le n(2^{n-1}-1)$ since there are $n$ possibilities for $a$ and $2^{n-1}-1$ possibilities for $(b_1,\ldots,b_m)$.

Restricting the above argument to $t\in \B_n$, we must have $a=1$ and $b_1=2$ since $a_1=2\in\{b_1,\ldots,b_m\}$.
Thus $\spa{*} \le 2^{n-2}$, and it is easy to see that the equality must hold when $\spac{*}=n(2^{n-1}-1)$.

One can check that $\SC271$ and $\SC356$ both satisfy the identities (i), (ii), and (iii).
It remains to show that $h(t)\ne h(t')$ for some assignment $h:X_n\to\{0,1,2\}$, where $s$ and $t$ are terms in $\F_n$ corresponding to distinct standard terms
\[ [x_a, x_{b_1}, \ldots, x_{b_{m-1}}] \langle x_{b_m}, x_{c_1}, \ldots, x_{c_{n-m-1}} \rangle 
\ne [x_{a'}, x_{b'_1}, \ldots, x_{b'_{\ell-1}}] \langle x_{b'_\ell}, x_{c'_1}, \ldots, x_{c'_{n-\ell-1}} \rangle. \]
Assume $m\le \ell$, without loss of generality.

First suppose that $a\ne a'$.
Define $h(x_a):=0$ and $h(x)=2$ for all $x\ne x_a$.
For both $\SC271$ and $\SC356$, one can check that $h(s)=0\ne h(t)$.

Next, suppose $a=a'$. 
Then $ \{c_1,\ldots,c_{n-m-1}\}$ and $\{c'_1,\ldots,c'_{n-\ell-1}\}$  must be different sets. 
Suppose some $i$ belongs to the latter but not the former, without loss of generality.
We must have $i\in\{b_1,\ldots,b_m\}$.

For $\SC271$, we have $h(s)=0 \ne 1=h(t)$, where $h(x_i):=2$ and $h(x):=1$ for all $x\ne x_i$.

For $\SC356$, we have $h(s) =1 \ne 2=h(t)$, where $h(x_i):=0$ and $h(x):=2$ for all $x\ne x_i$.
\end{proof}

\section{Upper bounds related to set partitions}\label{sec:set-partition}

In this section, we present a few varieties of groupoids, whose ac-spectra are related to set partitions.
Recall that the \emph{restricted Bell number} $B_{n,m}$ counts partitions of the set $\{1,2,\ldots,n\}$ into unordered nonempty blocks of size at most $m$~\cite{Mezo}; it gives the well-known \emph{Bell number} $B_n$ when $m\ge n$.
In particular, we have $B_{n,2}=1$ for $n=0,1$ and $B_{n,2}=B_{n-1,2}+(n-1)B_{n-2,2}$ for $n\ge2$; see the sequence A000085 in OEIS~\cite{OEIS} for other interpretations and closed formulas for $B_{n,2}$.
We also need the following definition by Cs\'{a}k\'{a}ny and Waldhauser~\cite{AssociativeSpectra1}.

\begin{definition}
Define a term $t$ to be a \emph{nest} if either $|t|=1$ (a \emph{trivial nest}) or there exists a term $t$ together with a variable $x$ such that $t= xt'$ or $t=t'x$, $|t'|=|t|-1$, and $t'$ is a nest.
Each variable in $t$ must be contained in a unique maximal nest, which is simply called a \emph{nest} of $t$.
Every nontrivial nest must have a unique subterm of the form $x_i x_j$, and the variables $x_i$ and $x_j$ are called the \emph{eggs} of this nest.
\end{definition}

Our first result is concerned with a variety of groupoids including the following two $3$-element groupoids.
\begin{center}
\begin{tabular}{cc}
\begin{tabular}{c|ccc}
* & 0 & 1 & 2 \\
\hline
0 & 0 & 0 & 0 \\
1 & 0 & 1 & 0 \\
2 & 0 & 0 & 1 
\end{tabular}
&
\begin{tabular}{c|ccc}
$*$ & $0$ & $1$ & $2$ \\
\hline
$0$ & $0$ & $1$ & $1$ \\
$1$ & $1$ & $0$ & $0$ \\
$2$ & $1$ & $0$ & $1$ 
\end{tabular} \\
$\SC79$ & $\SC1701$\\
\end{tabular}
\end{center}

\begin{proposition}\label{prop:79}
A groupoid $(G,*)$ satisfying the identities below must have $\spa{*}\le F_{n+1}-1$ and $\spac{*} \le B_{n,2}-1$ for $n=2,3,\ldots$, where the first inequality holds as an equality whenever the second does.
\[ \mathrm{(i)}\ xy\approx yx, \quad
\mathrm{(ii)}\ ((wx)y)z \approx ((wx)z)y \]
Moreover, both upper bounds are reached by $\SC79$ and $\SC1701$.
\end{proposition}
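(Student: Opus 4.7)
The plan is to establish the upper bounds via a partition-invariant argument and then verify attainment by direct computation in $\SC79$ and $\SC1701$.

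For the upper bound, I would associate to each term $t \in \F_n$ (for $n \geq 2$) a partition $\Pi(t)$ of $\{1, \ldots, n\}$ into blocks of size $1$ or $2$, defined as follows: for each non-trivial maximal nest of $t$, include its pair of eggs as a pair block; every remaining variable (whether a non-egg in some nest, or the lone variable of a trivial maximal nest) becomes a singleton block. I claim $\Pi(t)$ is invariant under identities (i) and (ii). Commutativity (i) clearly preserves the nest decomposition and egg pairs. For identity (ii) $((wx)y)z \approx ((wx)z)y$, a case analysis on the shapes of $w, x, y, z$ (each possibly a compound subterm) shows that the two sides yield the same collection of maximal nests, the same egg pairs within them, and the same set of non-egg variables, so their partitions agree. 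Consequently, the number of equivalence classes of terms modulo $\{(i), (ii)\}$ is at most the number of partitions of $\{1, \ldots, n\}$ into blocks of size at most $2$ having at least one pair block (any term with $|t| \geq 2$ has an innermost product contributing an egg pair). This count is $B_{n,2} - 1$, giving $\spac{*} \leq B_{n,2} - 1$. For bracketings $t \in \B_n$, the leaves appear in the fixed order $x_1, \ldots, x_n$, which forces every subtree's leaves to form a contiguous interval; the blocks of $\Pi(t)$ are then consecutive integer intervals of size $1$ or $2$, equivalent to a monomino-domino tiling of a $1 \times n$ strip. These tilings are counted by $F_{n+1}$, so $\spa{*} \leq F_{n+1} - 1$. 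Equality for $\spa{*}$ whenever $\spac{*}$ is sharp follows by restricting canonical representatives to bracketings.

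For attainment by $\SC79$, I would first verify directly that identities (i) and (ii) hold. The product rule $h(uv) = 1$ iff $h(u) = h(v) \in \{1, 2\}$, and $h(uv) = 0$ otherwise, then leads by induction on $|t|$ to the characterization, valid for $|t| \geq 2$: $h(t) = 1$ iff every pair block $\{i, j\} \in \Pi(t)$ satisfies $h(x_i) = h(x_j) \in \{1, 2\}$ and every singleton block $\{k\} \in \Pi(t)$ satisfies $h(x_k) = 1$, and $h(t) = 0$ otherwise. To separate two distinct partitions $\Pi \neq \Pi'$, I would pick an element $k$ lying in different blocks of the two partitions and tailor an assignment (for instance, $h(x_k) = 2$, $h(x_\ell) = 2$ on the $\Pi$-partner $\ell$ of $k$ if any, and $h = 1$ elsewhere) that satisfies exactly one of the two characterizations. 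A parallel argument handles $\SC1701$: verify (i) and (ii) (using that $*$ has image in $\{0, 1\}$, on which $*$ acts as XOR) and derive the analogous formula expressing $h(t)$ as an XOR over per-block contributions, then exhibit separating assignments for each pair of distinct partitions.

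The main obstacle will be the partition invariance under identity (ii) when $w, x, y, z$ are not all single variables. A careful case analysis is needed; the most delicate sub-cases arise when $y$ or $z$ is a variable while the other is a non-nest subterm, so that $z$'s role can shift between being a singleton of a trivial nest on one side and being a non-egg absorbed into the leftmost nest on the other (yet contributing a singleton to $\Pi$ either way). Confirming that such shifts leave $\Pi$ unchanged requires enumerating several configurations of nested substructures. A secondary concern is the distinctness verification for $\SC1701$, whose multiplication on the full set $\{0, 1, 2\}$ is less symmetric than on $\{0, 1\}$; the separating assignments there must be chosen with more care than in the $\SC79$ case, since the $\SC1701$ term-function is a sum (XOR) of per-block values, and exhibiting test assignments that flip only the desired block's contribution needs the precise values of the egg-pair operation on $\{0, 1, 2\}$.
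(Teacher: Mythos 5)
There is a genuine logical gap in your upper-bound argument: you prove the wrong direction of the key implication. Showing that $\Pi(t)$ is \emph{invariant} under applications of (i) and (ii) establishes only that equivalent terms have equal partitions, i.e., that the map from equivalence classes of terms to partitions is well-defined. That yields a \emph{lower} bound on the number of equivalence classes, not the upper bound you need. To conclude $\spac{*}\le B_{n,2}-1$ you must prove the converse: any two terms $s,t\in\F_n$ with $\Pi(s)=\Pi(t)$ induce the same term operation on every groupoid satisfying (i) and (ii). Your sentence ``Consequently, the number of equivalence classes \dots is at most the number of partitions'' is a non sequitur, and the case analysis you flag as the main obstacle (invariance under (ii)) is not the hard part of the proof at all. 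The paper supplies the missing direction by induction on $n$: given $s$ and $t$ with the same egg pairs, pick a common egg pair $\{x_i,x_j\}$, use (i) to write both terms in the form $[x_i,x_j,\dots]$ and (ii) to sort the remaining factors by size, then replace the subterm $x_ix_j$ by a fresh variable and invoke the inductive hypothesis on $n-1$ variables (with a separate easy case when all remaining factors are single variables). You would need to add an argument of this kind; without it the bound $\spac{*}\le B_{n,2}-1$ is unproved.

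The remainder of your proposal is sound and matches the paper. The counts are right: $B_{n,2}-1$ partitions of $\nset{n}$ into blocks of size at most $2$ with at least one doubleton, and for bracketings the doubletons are pairs of consecutive integers, giving the $F_{n+1}-1$ monomino--domino count. Your attainment argument for $\SC79$ (the characterization $h(t)=1$ iff every egg pair takes a common value in $\{1,2\}$ and every singleton takes the value $1$, separated by the assignment that puts $2$ on one egg pair and $1$ elsewhere) is exactly the paper's, and your XOR-based plan for $\SC1701$ works with the paper's simpler test assignment ($2$ on the distinguishing egg pair, $0$ elsewhere), which makes both eggs contribute $1$ individually when they are split apart in the other term and hence cancel.
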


\begin{proof}
Suppose $s,t\in \F_n$ have the same eggs of nests.
We show by induction on $n$ that $s^*=t^*$.
Let $x_i$ and $x_j$ be the eggs of a nest of $s$; they must be the eggs of a nest of $t$.
The case $n=2$ is trivial; assume $n\ge3$ below.
Thanks to the identity (i), we may assume $s=[x_i, x_j, s_1, \ldots, s_\ell]$ and $t=[x_i, x_j, t_1, \ldots, t_m]$.
We may also assume that $|s_1|\ge \cdots \ge |s_\ell|$ and $|t_1|\ge \cdots \ge |t_m|$ by (ii).
Assume $|s_1|\le |t_1|$, without loss of generality.

\vskip5pt\noindent\textsf{Case 1:} $|t_1|\ge |s_1|>1$.
Replacing $x_i x_j$ with a new variable $x_0$ in both $s$ and $t$ gives full linear terms $s'$ and $t'$ in $n-1$ variables that share the same eggs of nests.
It follows from the induction hypothesis that $(s')^*=(t')^*$, and this implies $s^*=t^*$.

\vskip5pt\noindent\textsf{Case 2:} $|s_1|=1$.
Then $|s_2|=\cdots=|s_\ell|=1$ and $s$ has only two eggs $x_i$ and $x_j$.
We must have $|t_1|=1$ (otherwise $t_1$ contains eggs different from $x_i$ and $x_j$) and thus $|t_2|=\cdots=|t_m|=1$.
We can use (ii) to make sure $s_1=t_1=x_k$ for some $k\notin\{i,j\}$.
Replacing $x_i x_j$ with a new variable $x_0$ in both $s$ and $t$ gives $s'$ and $t'$ with eggs $x_0$ and $x_k$.
By the induction hypothesis, we have $(s')^*=(t')^*$.
This implies that $s^*=t^*$.
\vskip5pt

Therefore, $\spac{*}$ is bounded above by $B_{n,2}-1$, which is the number of partitions of $\{1,\ldots,n\}$ into blocks of size one or two with at least one block of size two (since there is at least one nest with two eggs).

Restricting the above argument to bracketings of $x_1, \ldots, x_n$, we have $\spa{*}\le F_{n+1}-1$ since the partitions associated with bracketings of $x_1, \ldots, x_n$ must have two consecutive integers in each block of size two; see also Cs\'{a}k\'{a}ny and Waldhauser~\cite[\S5.6]{AssociativeSpectra1}.
It is easy to see that $\spac{*}=B_{n,2}-1$ implies $\spa{*}= F_{n+1}-1$.

It is routine to verify that groupoids $\SC79$ and $\SC1701$ satisfy identities (i) and (ii).
It remains to verify that if $s, t \in \F_n$ are terms whose egges of nests are not the same, then $s$ and $t$ induce distinct operations on $\SC79$ and on $\SC1701$.
Suppose that $x_i$ and $x_j$ are eggs of a nest in $s$ but not eggs of any nest in $t$.
For $\SC79$,  Cs\'{a}k\'{a}ny and Waldhauser~\cite{AssociativeSpectra1} observed that $h(s) =1  \ne 0 = h(t)$, where $h(x_i)=h(x_j):=2$ and $h(x):=1$ for all $x\notin\{x_i,x_j\}$.
For $\SC1701$, we have $h(s) = 1\ne 0= h(t)$, where $h(x_i)=h(x_j):=2$ and $h(x):=0$ for all $x\notin\{x_i,x_j\}$.
Thus $\spac{*} = B_{n,2}-1$ and $\spa{*}=F_{n+1}-1$ for $\SC79$ and $\SC1701$.
\end{proof}

A set partition is \emph{rooted} if it has a distinguished singleton block called the \emph{root}.
The number of rooted partitions of $\{1,2,\ldots,n\}$ is $nB_{n-1}=1,2,6,20,75,312,\ldots$~\cite[A052889]{OEIS}.
We show below that this number is the upper bound for the ac-spectra of a variety of groupoids and can be attained by the $3$-element groupoids $\SC41$ and $\SC96$.
The Cayley tables of these two groupoids together with the anti-isomorphic groupoids $\SC398$ and $\SC1069$ are given below.
\begin{center}
\begin{tabular}{ccccccc}
\begin{tabular}{c|ccc}
$*$ & $0$ & $1$ & $2$ \\
\hline
$0$ & $0$ & $0$ & $0$ \\
$1$ & $0$ & $0$ & $1$ \\
$2$ & $1$ & $1$ & $2$ 
\end{tabular}
&
\begin{tabular}{c|ccc}
$*$ & $0$ & $1$ & $2$ \\
\hline
$0$ & $0$ & $0$ & $0$ \\
$1$ & $0$ & $1$ & $0$ \\
$2$ & $2$ & $0$ & $2$ 
\end{tabular}
&
\begin{tabular}{c|ccc}
$*$ & $0$ & $1$ & $2$ \\
\hline
$0$ & $0$ & $0$ & $1$ \\
$1$ & $0$ & $0$ & $1$ \\
$2$ & $0$ & $1$ & $2$ 
\end{tabular}
&
\begin{tabular}{c|ccc}
$*$ & $0$ & $1$ & $2$ \\
\hline
$0$ & $0$ & $0$ & $2$ \\
$1$ & $0$ & $1$ & $0$ \\
$2$ & $0$ & $0$ & $2$ 
\end{tabular}
\\
$\SC41$ & $\SC96$ & $\SC398$ & $\SC1069$
\end{tabular}
\end{center}

\begin{theorem}\label{thm:set-partition}
A groupoid $(G,*)$ satisfying the identities below must have $\spa{*}\le 2^{n-2}$ for $n=2,3,\ldots$ and $\spac{*}\le nB_{n-1}$ for $n=1,2,\ldots$, where the second inequality holds as an equality whenever the second does.
\[ \mathrm{(i)}\ x(yz) \approx x(zy), \quad
\mathrm{(ii)}\ (xy)z \approx (xz)y. \]
Moreover, both upper bounds are reached by $\SC41$ and $\SC96$ \textup{(}hence the anti-isomorphic $\SC398$ and $\SC1069$\textup{)}.
\end{theorem}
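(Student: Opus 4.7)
The plan is to normalize every term to a standard form that encodes a rooted set partition of $\{1, \ldots, n\}$, count such forms, and verify distinctness for the witnesses. Starting from the leftmost decomposition $t = [t_0, t_1, \ldots, t_m]$ with $|t_0| = 1$, I would first apply the part of Lemma~\ref{lem:t} that uses both (i) and (ii) to replace each $t_i$ by $t_i^{\mathrm{L}\mathord{<}}$, and then use the part relying only on (ii) to permute the subterms $t_i^{\mathrm{L}\mathord{<}}$ arbitrarily. Thus every $t \in \F_n$ is equivalent on $(G,*)$ to a standard term $[x_a, t_1^{\mathrm{L}\mathord{<}}, \ldots, t_m^{\mathrm{L}\mathord{<}}]$, whose residual data is a root index $a \in \{1, \ldots, n\}$ together with an unordered set partition of $\{1, \ldots, n\} \setminus \{a\}$ into the blocks $\var(t_i^{\mathrm{L}\mathord{<}})$.

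Counting these standard forms is immediate: $n$ choices for $a$ times $B_{n-1}$ partitions of the remaining $n-1$ indices, so $\spac{*} \le n B_{n-1}$. For $t \in \B_n$ the leftmost variable must be $x_1$, forcing $a = 1$, and the subterms $t_i$ use consecutive ranges of indices from $\{2, \ldots, n\}$; the resulting ``consecutive partitions'' biject with compositions of $n-1$ and are counted by $2^{n-2}$, giving $\spa{*} \le 2^{n-2}$. If equality holds in $\spac{*} \le n B_{n-1}$, then all $n B_{n-1}$ standard forms induce distinct operations on $(G,*)$, so in particular the $2^{n-2}$ standard forms arising from bracketings do, proving $\spa{*} = 2^{n-2}$.

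For the attainment, a direct inspection of the Cayley tables confirms that both $\SC41$ and $\SC96$ satisfy (i) and (ii), so the reduction applies. The main remaining task is to prove that distinct standard forms $s = [x_a, s_1^{\mathrm{L}\mathord{<}}, \ldots, s_m^{\mathrm{L}\mathord{<}}]$ and $t = [x_{a'}, t_1^{\mathrm{L}\mathord{<}}, \ldots, t_\ell^{\mathrm{L}\mathord{<}}]$ induce distinct $n$-ary operations on each of these groupoids. I would split this into two cases: (1) $a \neq a'$, and (2) $a = a'$ but the partitions of $\{1, \ldots, n\} \setminus \{a\}$ differ. In case (1), I would exploit that $0$ is a left zero in both groupoids (that is, $0 * x = 0$ for all $x$): setting $x_a \mapsto 0$ annihilates $s^*$, while assigning nonzero values to the remaining variables keeps $t^*$ nonzero. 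In case (2), there exist indices $i, j \neq a$ that share a block in the partition of $s$ but not in that of $t$ (or conversely); I would assign witnessing values, such as $h(x_i) = h(x_j) = 1$ in $\SC41$ (so that $x_i * x_j = 0$ collapses only within the block containing them), together with suitable values on the other variables, to force $h(s) \neq h(t)$.

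The hard part will be case (2), which must treat every way two rooted partitions with the same root can differ. The crux is to find, in each of $\SC41$ and $\SC96$, a ``block-grouping detector'': an assignment under which the evaluation of a block $[x_{i_1}, \ldots, x_{i_k}]$ depends on whether two chosen indices lie together in it, and such that this local difference propagates through the outer leftmost bracketing back to the root. I expect this step to parallel the case-by-case distinction arguments used in the proofs of Proposition~\ref{prop:367} and Theorem~\ref{thm:3162}, and to require several sub-cases depending on block sizes and on whether the witness pair lies in a singleton block or in a larger block.
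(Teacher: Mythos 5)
Your proposal follows essentially the same route as the paper: the identical normalization via Lemma~\ref{lem:t} to $[x_a, t_{\sigma(1)}^{\mathrm{L}\mathord{<}}, \dots, t_{\sigma(m)}^{\mathrm{L}\mathord{<}}]$, the same rooted-set-partition encoding and counts ($nB_{n-1}$ and the $2^{n-2}$ interval partitions), and the same two-case distinctness check for $\SC41$ and $\SC96$ (left zero at the root when the roots differ, a separating pair when they agree). The only caveats are that in case (1) the ``nonzero values'' must be chosen as $2$ in $\SC41$ (since $1*1=0$ there, setting the other variables to $1$ can collapse $t^*$ to $0$ as well), and that the sub-case analysis you anticipate in case (2) is unnecessary: the paper uses a single assignment per groupoid, taking the pair to be the common minimum $c$ of the first differing block together with an index $d$ lying in exactly one of the two versions of that block, mapped to $0$ (for $\SC41$), so that the affected blocks evaluate differently and the difference propagates through the outer leftmost bracketing without any dependence on block sizes.
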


\begin{proof}
Let $t$ be an arbitrary term in $\F_n$ with leftmost decomposition $t = [x_a, t_1, \dots, t_m]$.
Define a rooted set partition of $\{1,2,\ldots,n\}$ associated with $t$: we have a block consisting of the indices of the variables in $t_j$ for all $j=1,2,\ldots,m$ together with a singleton block $\{a\}$ that is the root of this partition. 
By (i), (ii), and Lemma~\ref{lem:t}, $t$ induces on $(G,*)$ the same term operation as $[x_a, t_{\sigma(1)}^{\mathrm{L}\mathord{<}}, \dots, t_{\sigma(m)}^{\mathrm{L}\mathord{<}}]$ for any permutation $\sigma \in \SS_m$.
It follows that terms in $\F_n$ associated with the same rooted partition must induce the same $n$-ary operation on $(G,*)$.
Thus $\spac{*}\le nB_{n-1}$.

The rooted set partition associated with a bracketing of $x_1*\cdots*x_n$ must have $\{1\}$ as its root and the other blocks are intervals.
The number of such ``interval partitions'' can be found by counting the number of ways of inserting bars into the $n-2$ spaces between $2,\ldots,n$.
Thus $\spa{*}\le 2^{n-2}$.

If $\spac{*}=nB_{n-1}$ for $n\ge1$, then $s^*\ne t^*$ whenever $s,t\in \F_n$ are associated with distinct rooted set partitions, and restricting this to bracketings of $x_1*\cdots*x_n$ gives $\spa{*}=2^{n-2}$.

It is routine to check that $\SC41$ and $\SC96$ both satisfy the identities (i) and (ii).
It remains to show that $s^*\ne t^*$ whenever $s$ and $t$ are terms in $\F_n$ associated with distinct rooted set partitions.
Suppose $s = [ x_a, s_1, \ldots, s_\ell ]$ and $t = [ x_b, t_1, \ldots, t_m ]$, where $s_1, \ldots, s_\ell$ and $t_1, \ldots, t_m$ are ordered according to the smallest index of the variables they contain.
If $a\ne b$ then $s^*\ne t^*$ since
\begin{itemize}
\item
$h(s) = 0 \ne 1 = h(t)$ if $(\{0,1,2\},*)=\SC41$, $h(x_a)=0$ and $h(x_i) = 2$ for all $i \ne a$, and
\item
$h(s) = 0 \ne 2 = h(t)$ if $(\{0,1,2\},*)=\SC96$, $h(x_a)=0$ and $h(x_i) = 2$ for all $i \ne a$.
\end{itemize}
Assume $a=b$ below.
Let $j$ be the smallest integer such that $s_j$ and $t_j$ do not contain the same set of variables.
The least index $c$ of the variables of $s_j$ must agree with that of $t_j$.
There exists another variable $x_d$ in exactly one of $s_j$ and $t_j$, say the former.
Then $x_d$ is in $t_k$ for some $k>j$. 
We have
\begin{itemize}
\item
$h(s) = 1 \ne 0 = h(t)$ if $(\{0,1,2\},*)=\SC41$, $h(x_c)=h(x_d)=0$, and $h(x_i) = 2$ for all $i\notin\{c,d\}$, and
\item
$h(s) = 2 \ne 0 = h(t)$ if $(\{0,1,2\},*)=\SC96$, $h(x_a)=h(x_c)=2$, and $h(x_i) = 1$ for all $i\notin\{a,c\}$.
\end{itemize}
Thus $s^*\ne t^*$.
\end{proof}

Next, we provide an ordered version of Theorem~\ref{thm:set-partition} that has the same associative spectrum upper bound but a different ac-spectrum upper bound.
Recall that the \emph{ordered Bell number} or \emph{Fubini number} $B'_n$ counts ordered partitions of the set $\{1,2,\ldots,n\}$~\cite[A000670]{OEIS}.
The number of rooted ordered set partitions of $\{1,\ldots,n\}$ is $nB'_{n-1}=1, 2, 9, 52, 375,\ldots$~\cite[A052882]{OEIS}.
We show that $nB'_{n-1}$ is also the upper bound for the ac-spectra of a variety of groupoids and can be reached by the $3$-element groupoids $\SC262$, $\SC1812$, and $\SC2446$, which are anti-isomorphic to $\SC1441$ (by $0\mapsto2$, $1\mapsto0$, and $2\mapsto1$), $\SC1793$ and $\SC2430$, respectively.
\begin{center}
\begin{tabular}{ccccccc}
\begin{tabular}{c|ccc}
$*$ & $0$ & $1$ & $2$ \\
\hline
$0$ & $0$ & $0$ & $0$ \\
$1$ & $1$ & $1$ & $0$ \\
$2$ & $1$ & $1$ & $2$ 
\end{tabular}
&
\begin{tabular}{c|ccc}
$*$ & $0$ & $1$ & $2$ \\
\hline
$0$ & $0$ & $0$ & $2$ \\
$1$ & $2$ & $1$ & $2$ \\
$2$ & $0$ & $0$ & $2$ 
\end{tabular}
&
\begin{tabular}{c|ccc}
$*$ & $0$ & $1$ & $2$ \\
\hline
$0$ & $0$ & $1$ & $1$ \\
$1$ & $1$ & $2$ & $1$ \\
$2$ & $1$ & $2$ & $1$ 
\end{tabular}
&
\begin{tabular}{c|ccc}
$*$ & $0$ & $1$ & $2$ \\
\hline
$0$ & $0$ & $1$ & $1$ \\
$1$ & $1$ & $2$ & $2$ \\
$2$ & $1$ & $1$ & $1$ 
\end{tabular}
&
\begin{tabular}{c|ccc}
$*$ & $0$ & $1$ & $2$ \\
\hline
$0$ & $1$ & $0$ & $0$ \\
$1$ & $0$ & $2$ & $1$ \\
$2$ & $0$ & $2$ & $1$ 
\end{tabular}
& 
\begin{tabular}{c|ccc}
$*$ & $0$ & $1$ & $2$ \\
\hline
$0$ & $1$ & $0$ & $0$ \\
$1$ & $0$ & $2$ & $2$ \\
$2$ & $0$ & $1$ & $1$ 
\end{tabular}
\\
$\SC262$ & $\SC1441$ & $\SC1793$ & $\SC1812$ & $\SC2430$ & $\SC2446$
\end{tabular}
\end{center}

\begin{theorem}\label{thm:ordered-set-partition}
A groupoid $(G,*)$ satisfying the identities below must have $\spa{*}\le 2^{n-2}$ for $n=2,3,\ldots$ and $\spac{*}\le n B'_{n-1}$ for $n=1,2,\ldots$, where the first inequality holds as an equality whenever the second does.
\[ \mathrm{(i)}\ x(yz) \approx x(zy), \quad 
\mathrm{(ii)}\ w(x(yz)) \approx w((xy)z) \] 
Moreover, both equalities hold for $\SC262$, $\SC1812$, and $\SC2446$ \textup{(}hence the anti-isomorphic $\SC1441$, $\SC1793$, and $\SC2430$\textup{)}.
\end{theorem}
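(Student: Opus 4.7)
The plan is to parallel the proof of Theorem~\ref{thm:set-partition}, but track the \emph{order} of subterms in the leftmost decomposition, since the identity $(xy)z \approx (xz)y$ that previously allowed free permutation of subterms is no longer available.

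First I would take an arbitrary $t \in \F_n$ with leftmost decomposition $t = [x_a, t_1, \dots, t_m]$. By identities (i) and (ii), Lemma~\ref{lem:t}\ref{lem:t:rightmost-increasing} applies, so $(G,*)$ satisfies $t \approx [x_a, t_1^{\mathrm{L}\mathord{<}}, \dots, t_m^{\mathrm{L}\mathord{<}}]$. This puts each subterm $t_i$ into leftmost bracketing with variables in increasing order of index. Crucially, since neither identity permits swapping adjacent $t_i$'s at the top level, the sequence $(t_1^{\mathrm{L}\mathord{<}}, \dots, t_m^{\mathrm{L}\mathord{<}})$ is an \emph{ordered} list of nonempty blocks. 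I would then associate to $t$ the rooted ordered set partition of $\{1, \dots, n\}$ whose root is $\{a\}$ and whose remaining ordered blocks are $\var(t_1), \dots, \var(t_m)$. Two terms inducing the same rooted ordered partition have the same standard form and hence the same term operation, giving $\spac{*} \le n B'_{n-1}$ (choose $a \in \{1, \dots, n\}$, then an ordered partition of the remaining $n-1$ elements).

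For the associative spectrum, I would restrict to $t \in \B_n$, i.e., a bracketing of $x_1 x_2 \cdots x_n$. Since the standard form preserves the left-to-right order of variables, we must have $a = 1$ and the blocks $\var(t_1), \dots, \var(t_m)$ are consecutive intervals in $\{2, \dots, n\}$. Such interval decompositions correspond to choices of which of the $n-2$ gaps between $2, 3, \dots, n$ become block boundaries, giving $\spa{*} \le 2^{n-2}$. The implication ``$\spac{*} = n B'_{n-1} \Rightarrow \spa{*} = 2^{n-2}$'' follows because the distinct standard forms for bracketings are a subset of the distinct standard forms for full linear terms; if the latter all induce distinct operations, so do the former.

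The bulk of the work, and the main obstacle, is to show that the upper bounds are attained for $\SC262$, $\SC1812$, and $\SC2446$. I would first routinely verify that each of the three Cayley tables satisfies identities (i) and (ii) by direct computation. Then, given two distinct standard terms $s = [x_a, s_1^{\mathrm{L}\mathord{<}}, \dots, s_\ell^{\mathrm{L}\mathord{<}}]$ and $t = [x_b, t_1^{\mathrm{L}\mathord{<}}, \dots, t_m^{\mathrm{L}\mathord{<}}]$ associated with different rooted ordered partitions, I would exhibit an assignment $h \colon X_n \to \{0, 1, 2\}$ with $h(s) \ne h(t)$. The case analysis splits naturally: (a) $a \ne b$, handled by setting $h(x_a) := 0$ and $h(x_i)$ to a suitable constant for $i \ne a$; and (b) $a = b$ but the ordered block sequences differ at some position $j$, in which case I pick the least index $j$ where $\var(s_j) \ne \var(t_j)$, locate a variable $x_d$ lying in one but not the other, and assign values targeted at this $x_d$ to force different outputs. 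For each of the three groupoids, a small bespoke choice of $h$ (analogous to the $h(x_a)=0$, $h(x_i)=2$ trick used in Theorem~\ref{thm:set-partition}) should separate the two cases; verifying this systematically is the tedious part, and it is where the three tables really differ from each other and from the unordered-partition groupoids.
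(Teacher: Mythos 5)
Your reduction to standard forms is exactly the paper's argument: Lemma~\ref{lem:t} turns $t=[x_a,t_1,\dots,t_m]$ into $[x_a,t_1^{\mathrm{L}\mathord{<}},\dots,t_m^{\mathrm{L}\mathord{<}}]$, terms with the same rooted ordered set partition collapse, and restricting to $\B_n$ gives the interval count $2^{n-2}$. That half is correct and complete.

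The gap is in the sharpness verification, which you correctly identify as the bulk of the work but whose sketched strategy would fail in the hardest case. When $a=b$, your plan is to take the least $j$ with $\var(s_j)\ne\var(t_j)$, find a variable $x_d$ in one block but not the other, and use a membership-targeted assignment as in Theorem~\ref{thm:set-partition}. But two distinct rooted ordered partitions can have the same root and the \emph{same underlying unordered blocks}, merely permuted; then every $\var(s_i)$ equals some $\var(t_k)$, and no assignment that depends only on which variables share a block can separate $s$ from $t$ --- the distinguishing data is purely the \emph{position} of a block in the leftmost bracketing. The paper's proof handles this by an explicit subcase ($\var(s_j)=\var(t_k)$ with $j<k$) in which the assignment is chosen so that the groupoid's output records the parity or position of the index of the block containing the marked variables; for $\SC1812$ and $\SC2446$ this in turn requires first normalizing the parities of $\ell$ and $m$ (via the all-$1$ assignment, using that these groupoids flip values with each extra factor) before the positional argument can even be set up. None of this is anticipated by "a small bespoke choice of $h$ analogous to the $h(x_a)=0$, $h(x_i)=2$ trick," so as written the proposal does not establish that the three groupoids attain the bound; it only establishes the upper bounds.
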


\begin{proof}
By Lemma~\ref{lem:t}, we can transform an arbitrary term $t\in \F_n$ with leftmost decomposition $t = [ t_0, t_1, \ldots, t_m]$, where $|t_0|=1$, to $[t_0, t_1^{\mathrm{L}\mathord{<}}, \dots, t_m^{\mathrm{L}\mathord{<}}]$.
Thus terms in $\F_n$ induce the same $n$-ary operation if they are associated with the same rooted ordered set partitions.
It follows that $\spac{*}\le nB'_{n-1}$.
Restricting the above argument to $\B_n$ gives $\spa{*}\le 2^{n-2}$, where the equality holds if $\spac{*}=nB'_{n-1}$.

It is routine to check that $\SC262$, $\SC1812$, and $\SC2446$ all satisfy the identities (i) and (ii).
It remains to show that $s^* \ne t^*$ whenever $s,t\in \F_n$ are associated with distinct rooted ordered set partitions of $\{1,2,\ldots,n\}$. 
We can write $s = [x_a, s_1, \ldots, s_\ell]$ and $t = [ x_b, t_1, \ldots, t_m]$.
If $a\ne b$ then $s^*\ne t^*$ by the following:
\begin{itemize}
\item
For $\SC262$, we have $h(s) = 1 \ne 0 = h(t)$, where $h(x_a):=1$ and $h(x) := 0$ for all $x\ne x_a$.
\item
For $\SC1812$ and $\SC2446$, one of $h(s)$ and $h(t)$ is $1$ and the other is $2$, where $h(x):=1$ for all $x$ if $\ell$ and $m$ have different parities or $h(x_a):=1$ and $h(x) := 2$ for all $x \ne x_a$ otherwise.
\end{itemize}

Assume $a=b$ below. 
Let $j$ be the smallest integer such that $\var(s_j)\ne \var(t_j)$.

For $\SC262$, we distinguish two cases.

\vskip5pt\noindent\textsf{Case 1}: $\var(t_i)\not\subseteq \var(s_j)$ for all $i$.
Define $h(x):=2$ for all $x\in\{x_a\}\cup\var(s_j)$ and $h(x):=0$ for all $x\notin \var(s_j)$.
Then $h(x_a)=2$, $h(s_j)=2$, $h(s_i)=0$ for all $i\ne j$, and $h(t_i)\in\{0,1\}$ for all $i$.
One can check that $h(s)=1\ne 0=h(t)$ when $j=1$ and $h(s)=0\ne 1=h(t)$ when $j>1$.

\vskip5pt\noindent\textsf{Case 2}: $\var(t_k)\subseteq \var(s_j)$ for some $k$.
If $\var(t_k)\subsetneq \var(s_j)$, then we are back to Case 1 by switching $s$ and $t$ and using $t_k$ instead of $s_j$, since $\var(s_i)\not\subseteq \var(t_k)$ for all $i$.
Thus we may assume that $\var(s_j)=\var(t_k)$, which implies $j<k$ since $\var(s_i) = \var(t_i)$ for all $i<j$.
Define
\[ h(x):=\begin{cases}
2, & \text{if } x\in\{x_a\}\cup\var(s_1)\cup\cdots\cup\var(s_j)=\var(t_1)\cup\cdots \cup \var(t_{j-1})\cup\var(t_k); \\
0, & \text{if } x\notin\{x_a\}\cup\var(s_1)\cup\cdots\cup\var(s_j)=\var(t_1)\cup\cdots \cup \var(t_{j-1})\cup\var(t_k).
\end{cases} \]
We have $h(s_1)=\cdots=h(s_j)=2$, $h(s_i)=0$ for all $i=j+1,\ldots,\ell$, and thus $h(s)=1$. 
On the other hand, we have $h(t_1)=\cdots=h(t_{j-1})=h(t_k)=2$, $h(t_i)=0$ for all $i\in\{j,\ldots,m\}\setminus\{k\}$, and thus $h(t)=0\ne h(s)$.
\vskip5pt

For $\SC1812$, we may assume that $\ell$ and $m$ have the same parity by the all-$1$ substitution as discussed earlier.
We distinguish some cases below.

\vskip5pt\noindent\textsf{Case 1}: $\var(t_i)\not\subseteq \var(s_j)$ for all $i$.
We further distinguish two subcases below.
\begin{itemize}
\item
Suppose that $j$ is odd.
Define $h(x):=0$ for all $x\in\var(s_j)$ and $h(x):=1$ for all $x\notin \var(s_j)$.
Then $h(x_a)=1$, $h(s_j)=0$, $h(s_i)\in\{1,2\}$ for all $i\ne j$, and $h(t_i)\in\{1,2\}$ for all $i$.
One can check that $h(s)=1$ if $\ell$ is odd or $h(s)=2$ if $\ell$ is even.
On the other hand, we have $h(t)=1$ if $m$ is even or $h(t)=2$ otherwise.
Since $\ell$ and $m$ have the same parity, it follows that $h(s)\ne h(t)$.
\item
Suppose that $j$ is even. 
Defined by $h(x):=0$ for all $x\in\var(s_j)$ and $h(x):=2$ for all $x\notin \var(s_j)$.
Then $h(x_a)=1$, $h(s_j)=0$, $h(s_i)\in\{1,2\}$ for all $i\ne j$, and $h(t_i)\in\{1,2\}$ for all $i$.
One can check that $h(s)=1$ if $\ell$ is even or $h(s)=2$ if $\ell$ is odd.
On the other hand, we have $h(t)=1$ if $m$ is odd or $h(t)=2$ if $m$ is even.
Since $\ell$ and $m$ have the same parity, we must have $h(s) \ne h(t)$.
\end{itemize}

\noindent\textsf{Case 2}: $\var(t_k)\subseteq \var(s_j)$ for some $k$.
If $\var(t_k)\subsetneq \var(s_j)$, then we are back to Case 1 by switching $s$ and $t$ and using $t_k$ instead of $s_j$, since $\var(s_i)\not\subseteq \var(t_k)$ for all $i$.
Thus we may assume that $\var(s_j)=\var(t_k)$, which implies $j<k$.
We further distinguish two subcases below.
\begin{itemize}
\item
Suppose that $j$ and $k$ have different parities.
Define $h(x):=0$ for all $x\in\var(s_j)$ and $h(x):=2$ for all $x\notin\var(s_j)$.
Then $h(x_a)=2$, $h(s_j)=h(t_k)=0$, $h(s_i)\in\{1,2\}$ for all $i\ne j$, and $h(t_i)\in\{1,2\}$ for all $i\ne k$.
One can check that $h(s)=1$ if $j$ has the same parity as $\ell$ or $h(s)=2$ otherwise.
Similarly, $h(t)=1$ if $k$ has the same parity as $m$ or $h(t)=2$ otherwise.
Since $\ell$ and $m$ have the same parity, we must have $h(s) \ne h(t)$.
\item
Suppose that $j$ and $k$ have the same parity.
Define
\[ h(x):=\begin{cases}
0, & \text{if } x\in\{x_a\}\cup\var(s_1)\cup\cdots\cup\var(s_j)=\var(t_1)\cup\cdots \cup \var(t_{j-1})\cup\var(t_k); \\
1, & \text{if } x\notin\{x_a\}\cup\var(s_1)\cup\cdots\cup\var(s_j)=\var(t_1)\cup\cdots \cup \var(t_{j-1})\cup\var(t_k).
\end{cases} \]
Then $h(x_a)=h(s_j)=h(t_k)=0$, $h(s_i)=h(t_i)=0$ for all $i=1,\ldots,j-1$, $h(s_i)\in\{1,2\}$ for all $i=j+1,\ldots,\ell$, and $h(t_i)\in\{1,2\}$ for all $i\in\{j,\ldots,m\}\setminus\{k\}$.
One can check that $h(s)=1$ if $j$ and $\ell$ have different parities or $h(s)=2$ otherwise (note that $j<\ell$).
Similarly, $h(t)=1$ if $k$ and $m$ have the same parity or $h(t)=2$ otherwise.
Since $\ell$ and $m$ have the same parity, we must have $h(s)\ne h(t)$.
\end{itemize}

For $\SC2446$, we may again assume that $\ell$ and $m$ have the same parity by the all-$1$ substitution.
There exists a variable $x_c$ in exactly one of $s_j$ and $t_j$, say the former.
Then $x_c$ is in $t_k$ for some $k>j$.
We distinguish two cases below.

\vskip5pt\noindent\textsf{Case 1}: $j$ and $k$ have different parities.
Define $h(x_a)=h(x_c):=0$ and $h(x):=1$ for all $x\notin\{x_a,x_c\}$.
We have $h(s_j)=0$ and $h(s_i)\in\{1,2\}$ for all $i\ne j$.
Thus $h(s)=1$ if $j$ has the same parity as $\ell$ or $h(s)=2$ otherwise.
Similarly, we have $h(t_k)=0$ and $h(t_i)\in\{1,2\}$ for all $i\ne k$.
Thus $h(t)=1$ if $k$ has the same parity as $m$ or $h(t)=2$ otherwise.
Then $h(s)\ne h(t)$ since $\ell$ and $m$ have the same parity.

\vskip5pt\noindent\textsf{Case 2}: $j$ and $k$ have the same parity.
Pick any variable $x_d$ in $t_{k-1}$, which must be in $s_{j'}$ for some $j'\ge j$.
The argument in the above paragraph is valid for $j'$ and $k-1$ if they have different parities.
Otherwise $j'$ and $k$ must have different parities, and it follows that $j'>j$.
Define $h(x_c)=h(x_d):=0$ and $h(x):=1$ for all $x\notin\{x_c, x_d\}$.
We have $h(s_j)=h(s_{j'})=0$ and $h(s_i)\in\{1,2\}$ for all $i\notin\{j,j'\}$.
Thus $h(s) = 1$ if $j'$ has the same parity as $\ell$, or $h(s)=2$ otherwise.
Similarly, we have $h(t_{k-1})=h(t_k)=0$ and $h(s_i)\in\{1,2\}$ for all $i\notin\{k-1,k\}$.
Thus $h(t) = 1$ if $k$ has the same parity as $m$, or $h(t)=2$ otherwise.
Then $h(s)\ne h(t)$ since $\ell$ and $m$ have the same parity but $j'$ and $k$ have different parities.
\end{proof}

\section{Congruence on depths}\label{sec:depth}
In this section we discuss the natural occurrence of leaf depths in the study of associative and ac-spectra of groupoids and how it can help us generalize some of our results.

Using both identities and the left/right depth, Hein and the first author~\cite{CatMod} determined the associative spectrum of a generalization of addition and subtraction 
to be the \emph{modular Catalan number}
\[ C_{k,n} := \sum_{0\le j\le (n-1)/k} \frac{(-1)^j}{n} {n\choose j} {2n-jk\choose n+1}, \] 
and we determined its ac-spectrum in our previous work~\cite{AC-Spectrum}.
These results are rephrased below to include Proposition~\ref{prop:subtration} as a special case (using right depth instead of identities).

\begin{theorem}[\cite{CatMod, AC-Spectrum}]\label{thm:k-right-depth}
Let $(G,*)$ be a groupoid such that for all $s,t\in F_n$, we have $s^*=t^*$ whenever $\rho_i(s) \equiv \rho_i(t) \pmod k$ for $i=1,\ldots,n$.
Then $\spa{*} \le C_{k,n-1}$ and 
\[ \spac{*} \le k! S(n,k) + n \sum_{0 \leq i \leq k-2} i! S(n-1,i) \]
for $n=1,2,\ldots$, where the first equality holds as an equality if the second one does. 
Moreover, both upper bounds are reached if ``whenever'' can be replaced with ``if and only if'' in the above condition.
In particular, both upper bounds are attained by $(\CC,*)$, where $a*b := a+e^{2\pi i/k} b$ for all $a,b\in\CC$.
\end{theorem}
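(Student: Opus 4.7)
The plan is to use the groupoid $(\CC, *)$ with $a * b := a + \zeta b$ and $\zeta := e^{2\pi i/k}$ as a guiding example. First I would prove by induction on $|t|$ that every $t \in \F_n$ satisfies
\[
   t^*(a_1, \ldots, a_n) = \sum_{i=1}^{n} \zeta^{\rho_i(t)} a_i
\]
on $(\CC, *)$, using $\rho_j(t_L t_R) = \rho_j(t_L)$ for $x_j \in \var(t_L)$ and $\rho_j(t_L t_R) = \rho_j(t_R) + 1$ for $x_j \in \var(t_R)$. Since $\zeta$ is a primitive $k$-th root of unity and the coefficients $\zeta^{\rho_i(t)}$ can be extracted by specializing the $a_i$ to standard basis vectors, $s^* = t^*$ on $(\CC, *)$ if and only if $\rho_i(s) \equiv \rho_i(t) \pmod{k}$ for all $i$. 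Thus $(\CC, *)$ satisfies the theorem's hypothesis with ``if and only if,'' and for this groupoid $\spa{*}$ and $\spac{*}$ equal the numbers of mod-$k$ right-depth sequences arising from $\B_n$ and $\F_n$, respectively.

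Under the hypothesis, $\spa{*}$ and $\spac{*}$ are bounded above by those two counts. The first count equals $C_{k, n-1}$ by the modular Catalan theorem from \cite{CatMod}, giving $\spa{*} \le C_{k, n-1}$. For the second I would establish the key lemma that in any full binary tree the multiset of right depths has support equal to an initial segment $\{0, 1, \ldots, D\}$, proved by induction on tree structure (combining $T_L$ and $T_R$ produces support $\{0, \ldots, D_L\} \cup \{1, \ldots, D_R + 1\}$, again an initial segment) together with the observation that only the leftmost leaf has right depth $0$. Reducing mod $k$, an achievable sequence $(d_1, \ldots, d_n) \in \{0, \ldots, k-1\}^n$ either hits every residue (when $D \ge k - 1$) or is supported on $\{0, \ldots, D\}$ with $D \le k - 2$ and has the value $0$ occurring exactly once. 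Sequences of the first type are counted by $k! S(n, k)$; for those of the second type, placing the unique $0$ gives a factor $n$ while filling the remaining positions by a surjection onto $\{1, \ldots, D\}$ gives $D! S(n-1, D)$, and summing over $D = 1, \ldots, k-2$ produces the claimed bound.

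The main obstacle is realizability: to ensure these counts are actually attained by $(\CC, *)$, one must exhibit for each candidate mod-$k$ sequence a tree and labeling that realize it. The essential ingredient is that every composition $(\mu_1, \ldots, \mu_D)$ of $n - 1$ into positive parts is the integer right-depth multiplicity sequence $(1, \mu_1, \ldots, \mu_D)$ of some full binary tree, which I would prove by induction on $n$: take $T = x_1 \cdot T'$ when $\mu_1 = 1$, and $T = T_L \cdot x$ with $T_L$ realizing $(\mu_1 - 1, \mu_2, \ldots, \mu_D)$ when $\mu_1 \ge 2$. Given prescribed surjective mod-$k$ multiplicities $(m_0, \ldots, m_{k-1})$, one can use the composition $(m_1, \ldots, m_{k-1})$ if $m_0 = 1$ or $(m_1, \ldots, m_{k-1}, m_0 - 1)$ if $m_0 \ge 2$; the non-surjective case uses the composition $(m_1, \ldots, m_D)$ directly. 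Combined with the first paragraph, this shows $(\CC, *)$ attains both bounds. Finally, the implication that $\spa{*}$ achieves equality when $\spac{*}$ does follows because equality for $\spac{*}$ forces distinct mod-$k$ sequences in $\F_n$ to yield distinct term functions, and restricting this to $\B_n$ gives $\spa{*} = C_{k, n-1}$.
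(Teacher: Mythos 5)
The paper does not actually prove this theorem: it is quoted from \cite{CatMod} and \cite{AC-Spectrum} and used as a black box to subsume Proposition~\ref{prop:subtration}, so there is no in-paper argument to compare against. Your proposal is, as far as I can check, a correct self-contained derivation. The linear-form computation $t^*(a_1,\ldots,a_n)=\sum_i \zeta^{\rho_i(t)}a_i$ is right, and it correctly establishes both that $(\CC,*)$ satisfies the hypothesis with ``if and only if'' and that its spectra equal the counts of achievable mod-$k$ right-depth sequences. Your structural lemma (right-depth support is an initial segment $\{0,\ldots,D\}$ with depth $0$ attained by exactly the leftmost leaf) correctly splits the achievable mod-$k$ sequences into the surjective ones, counted by $k!\,S(n,k)$, and those with $D\le k-2$ and a unique zero, counted by $n\,D!\,S(n-1,D)$; these classes are disjoint, and your realizability argument via compositions of $n-1$ (with the $m_0=1$ versus $m_0\ge 2$ case split) shows every such multiplicity vector is attained, after which relabeling leaves gives every sequence. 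Two small points: the $i=0$ term in the sum is needed only for $n=1$ (where $S(0,0)=1$), so your restriction to $D\ge 1$ is harmless for $n\ge 2$ but should be flagged; and the bound $\spa{*}\le C_{k,n-1}$ in your write-up rests on importing from \cite{CatMod} that the mod-$k$ right-depth classes on $\B_n$ number exactly $C_{k,n-1}$, which is legitimate given that the theorem is itself attributed to that reference, but it is the one step that is not proved from scratch. The final implication (equality for $\spac{*}$ forces equality for $\spa{*}$ by restriction to $\B_n$) is the same routine observation the paper uses elsewhere.
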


Now we use the left depth to generalize Proposition~\ref{prop:189} and Proposition~\ref{prop:3242} as follows.

\begin{theorem}
Let $(G,*)$ be a groupoid such that for all $s,t\in F_n$, we have $s^*=t^*$ whenever $s$ and $t$ have the same leftmost variable $x_i$, whose left depths in $s$ and $t$ are congruent modulo $k$.
Then $\spa{*} \le k$ and $\spac{*} \le kn$ for $n=k+1,\ldots$, where the first inequality holds as an equality if the second does.
Moreover, both upper bounds are reached if ``whenever'' can be replaced with ``if and only if'' in the above condition.  
\end{theorem}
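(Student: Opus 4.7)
The plan is to mirror the pattern of Proposition~\ref{prop:189} and Proposition~\ref{prop:3242}, but using the abstract hypothesis on leftmost variables and left depths in place of specific identities. First, I would derive the upper bounds by a direct counting argument: the hypothesis implies that the operation $t^*$ induced by a term $t\in\F_n$ depends only on the pair $(i,\delta_i(t)\bmod k)$, where $x_i$ is the leftmost variable of $t$. Since $i\in\{1,\ldots,n\}$ and the residue class ranges over at most $k$ values, this immediately yields $\spac{*}\le kn$. Restricting to bracketings $t\in\B_n$ forces $i=1$ (the leftmost variable of every bracketing of $x_1\cdots x_n$), whence $\spa{*}\le k$. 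Moreover, if $\spac{*}=kn$ then all $kn$ pairs realize pairwise distinct operations, and restricting to $i=1$ shows that the $k$ bracketings realizing the $k$ residues are pairwise distinct, so $\spa{*}=k$.

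For the sharpness claim, assume the hypothesis becomes an equivalence (i.e., $s^*=t^*$ \emph{iff} the leftmost variables agree and the corresponding left depths are congruent modulo $k$). Fix any pair $(i,d)$ with $i\in\{1,\ldots,n\}$ and $d\in\{0,1,\ldots,k-1\}$. Because the leftmost variable of a term is reached from the root by a path of left edges only, its left depth coincides with its depth, which can take any value in $\{1,2,\ldots,n-1\}$. Since $n\ge k+1$, there exists $d'\in\{1,\ldots,n-1\}$ with $d'\equiv d\pmod k$, and a term $t=[x_i,t_1,\ldots,t_{d'}]\in\F_n$ realizing this data can be built by distributing the remaining $n-1$ variables arbitrarily among the $d'$ nonempty blocks $t_1,\ldots,t_{d'}$. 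These $kn$ terms are pairwise nonequivalent by hypothesis, so $\spac{*}=kn$; restricting to $i=1$ and using, say, consecutive blocks of $x_2,\ldots,x_n$ for $t_1,\ldots,t_{d'}$ yields $k$ pairwise nonequivalent bracketings in $\B_n$, giving $\spa{*}=k$.

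The only delicate point is that the threshold $n\ge k+1$ is precisely what guarantees that $\{1,2,\ldots,n-1\}$ surjects onto $\ZZ/k\ZZ$; without this, some residue classes would be inaccessible to the leftmost variable's left depth and the upper bound would drop accordingly. Beyond this bookkeeping, the argument is entirely a counting of pairs together with the exhibition of canonical representatives, so I do not anticipate any genuine obstacle.
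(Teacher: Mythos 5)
Your proposal is correct and follows essentially the same route as the paper: both arguments reduce a term to the invariant pair (leftmost variable, left depth mod $k$), count $n\cdot k$ possibilities (with $i=1$ forced for bracketings), and verify sharpness by exhibiting representatives for every pair once $n\ge k+1$. The paper phrases this via explicit standard terms $[x_i, x_{i_1},\ldots,x_{i_m},\langle x_{i_{m+1}},\ldots,x_{i_{n-1}}\rangle]$ rather than abstract pairs, but the content is identical.
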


\begin{proof}
First, suppose that $s^*=t^*$ whenever $s$ and $t$ have the same leftmost variable $x_i$ and the left depths of $x_i$ in $s$ and $t$ are congruent modulo $k$.
Then every term in $\F_n$ induces the same $n$-ary operation on $(G,*)$ as a standard term $[x_i, x_{i_1}, \ldots, x_{i_m}, \langle x_{i_{m+1}},\ldots, x_{i_{n-1}} \rangle]$, where $i_1<\cdots<i_{n-1}$ and $m\in \{0,\ldots,k-1\}$.
The above standard term is determined by $x_i$ and $m$, for which there are $n$ and $k$ possibilities, respectively (the latter requires $n\ge k+1$).
Thus $\spac{*}\le kn$.
Similarly, the standard term of each bracketing in $\B_n$ must begin with $x_1$. Thus $\spa{*}\le k$.
It is easy to see that $\spac{*}=kn$ implies $\spa{*}=k$.

Now suppose that $s^*=t^*$  if and only if $s$ and $t$ have same leftmost variable $x_i$ and the left depths of $x_i$ in $s$ and $t$ are congruent modulo $k$.
The ``only if'' part implies that $s^*\ne t^*$ if $s$ and $t$ correspond to different standard terms.
Thus $\spa{*} = k$ and $\spac{*} = kn$.
\end{proof}

\begin{remark}
Hein and the first author~\cite{CatMod} observed that the congruence relation modulo $k$ on the left depths of the bracketings in $\B_n$ is characterized by the identity $s_0 [s_1,\ldots,s_{k+1}] \approx [s_0, s_1, \ldots, s_{k+1}]$ and showed that $C_{k,n-1}$ is the number of terms in $\B_n$ avoiding subterms of the form $s_0 [s_1,\ldots,s_{k+1}]$. 
We also have $\spa{*} \le C_{k,n-1}$ for a groupoid $(G,*)$ satisfying a different identity 
\begin{equation}\label{eq:comb+1+2}
s_0 [s_1,\ldots,s_{k+1}] \approx s_0 (s_1 [s_2,\ldots,s_{k+1}]) 
\end{equation}
since we can still use this identity to transform every bracketing in $\B_n$ to some bracketing in $\B_n$ that avoids subterms of the form $s_0 [s_1,\ldots,s_{k+1}]$.
Although not needed for the proof of the upper bound $\spa{*} \le C_{k,n-1}$, we can even show that distinct bracketings $t, t'\in\B_n$ both avoiding $s_0 [s_1,\ldots,s_{k+1}]$ cannot be obtained from each other by the identity~\eqref{eq:comb+1+2}, using the technique due to Hein and the first author~\cite{CatMod}.
In fact, we know that $t$ and $t'$ correspond to two binary trees with $n$ leaves labeled $1,\ldots,n$ from left to right, which in turn correspond to two \emph{rooted plane trees} $T$ and $T'$ with $n$ vertices labeled $1,\ldots,n$ in the \emph{preorder} by contracting each northeast southwest ``long edge'' in the drawings of $t$ and $t'$.
If $t$ can be obtained from $t'$ by the identity~\eqref{eq:comb+1+2}, then a non-root vertex in $T$ must have its \emph{degree} (the number of children) less than $k$ and congruent  to the degree of the vertex with the same label in $T'$ modulo $k-1$, and the leaves (degree-zero vertices) in $T$ must correspond to the leaves in $T'$.
Thus the degrees of the vertices of $T$ must agree with those of $T'$, and this forces $T=T'$.

For $k=3$, we suspect that $\spa{*}= C_{k,n-1}$ holds for $\SC64$, which is anti-isomorphic to $\SC399$.
\begin{center}
\begin{tabular}{ccccccc}
\begin{tabular}{c|ccc}
$*$ & $0$ & $1$ & $2$ \\
\hline
$0$ & $0$ & $0$ & $0$ \\
$1$ & $0$ & $0$ & $2$ \\
$2$ & $1$ & $1$ & $0$ 
\end{tabular}
&
\begin{tabular}{c|ccc}
$*$ & $0$ & $1$ & $2$ \\
\hline
$0$ & $0$ & $0$ & $1$ \\
$1$ & $0$ & $0$ & $1$ \\
$2$ & $0$ & $2$ & $0$ 
\end{tabular}
\\
$\SC64$ & $\SC399$
\end{tabular}
\end{center}
In fact, our computations show that the initial terms of the associative spectrum and ac-spectrum of $\SC64$ are $1,1,2,5,13,35,96,267$ and $1,2,12,84,710$, respectively; the former sequence coincides with $C_{3,n-1}$ while the latter differs from the upper bound of $\spac{*}$ for $k=3$ in Theorem~\ref{thm:k-right-depth}, whose initial terms are $1, 2, 9, 40, 155, 546, 1813, 5804, 18159$.
One can check that $\SC64$ satisfies at least the four identities below. 
\[ w(x(yz)) \approx w(y(xz)), \quad
w((xy)z) \approx w((zy)x), \quad
((wx)y)z \approx ((wz)y)x, \quad
v(w((xy)z)) \approx v(((wx)y)z) \]
But these identities seem unrelated to the left/right depth modulo $k=3$.
\end{remark}

The first author, Mickey, and Xu~\cite{DoubleMinus} used the depth to find the associative spectrum of the \emph{double minus} operation $a*b:=-a-b$, and we determined the ac-spectrum of this operation in previous work~\cite{AC-Spectrum}. 
Both proofs are valid for any field with at least three elements, giving the following result.

\begin{theorem}[\cite{DoubleMinus}]
Suppose that two terms $s,t\in\F_n$ induce the same $n$-ary operation on a groupoid $(G,*)$ whenever $d_i(s)\equiv d_i(t) \pmod2$ for $i=1,\ldots,n$.
Then $\spa{*}\le \lfloor 2^n/3 \rfloor$ and $\spac{*} \le (2^n-(-1)^n)/3$ for $n=1,2,\ldots$, where the first equality holds as an equality if the second one does.
Moreover, both upper bounds are reached if ``whenever'' can be replaced with ``if and only if'' in the above condition.
In particular, both upper bounds are achieved by the double minus operation on any field with at least three elements.
\end{theorem}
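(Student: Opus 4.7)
The plan is to bound $\spac{*}$ and $\spa{*}$ by the numbers of distinct depth-parity vectors $(d_1(t) \bmod 2, \dots, d_n(t) \bmod 2)$ realized as $t$ ranges over $\F_n$ and $\B_n$, respectively. Under the ``whenever'' hypothesis two terms sharing a parity vector induce the same operation, and under the ``iff'' form these counts equal $\spac{*}$ and $\spa{*}$ exactly.

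First I would pin down which parity vectors arise from $\F_n$. Multiplying the Kraft equality $\sum_{i=1}^n 2^{-d_i(t)} = 1$ by a sufficiently large power of $2$ and reducing modulo $3$ (using $2 \equiv -1 \pmod 3$) gives
\[
\sum_{i=1}^n (-1)^{d_i(t)} \equiv 1 \pmod 3,
\]
so every realized tuple $(e_1,\dots,e_n) \in \{0,1\}^n$ satisfies $(\#\{i : e_i = 0\}) - (\#\{i : e_i = 1\}) \equiv 1 \pmod 3$. For the converse I would show by induction on $n$ that every such tuple is realized: letting $a, b$ denote the counts of zeros and ones, for $n=1$ take a leaf; for $n \geq 2$ with $b \geq 1$ attach a leaf to an inductively constructed tree realizing $(b-1, a)$ (joining two subtrees at a new root flips all depth parities); and for $b = 0$, so $a \equiv 1 \pmod 3$ and $a \geq 4$, combine inductively constructed trees realizing $(0, 2)$ and $(0, a-2)$.

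Counting the parity tuples satisfying the congruence with a roots-of-unity filter (letting $\omega$ be a primitive cube root of unity and using $\omega + \omega^{-1} = -1$) yields
\[
\frac{1}{3}\sum_{j=0}^{2} \omega^{-j}(\omega^j + \omega^{-j})^n = \frac{2^n - (-1)^n}{3},
\]
giving the bound on $\spac{*}$. For $\spa{*}$ I would bound it by the number $M_n$ of parity tuples arising from $\B_n$ (with leaves labeled $1,\dots,n$ left-to-right) and, rather than counting $M_n$ directly, appeal to \cite{DoubleMinus}: an easy induction on $|t|$ shows that the double minus $a * b := -a - b$ on a field $F$ with $\mathrm{char}(F) \neq 2$ satisfies $t^*(a_1,\dots,a_n) = \sum_i (-1)^{d_i(t)} a_i$, so when $|F| \geq 3$ distinct parity vectors give distinct linear functions and the ``iff'' form of the hypothesis holds. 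Hence $\spa{*}$ for double minus equals $M_n$; combined with $\spa{*} = \lfloor 2^n/3 \rfloor$ from \cite{DoubleMinus}, this forces $M_n = \lfloor 2^n/3 \rfloor$ and yields the desired bound.

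The remaining claims follow readily. Under the ``iff'' hypothesis both bounds are attained; more generally, equality in the ac-bound forces distinct realized parity vectors to produce distinct operations, and restricting to the $M_n$ parity vectors of $\B_n$ then gives equality in the associative bound. The double minus on any field of at least three elements (and characteristic $\neq 2$) attains both bounds by the argument above together with the ac-spectrum computation in \cite{AC-Spectrum}. The main obstacle I anticipate is the $b = 0$ subcase of the realization induction, where no single leaf can be attached and one must assemble an all-even-depth tree from two all-odd-depth subtrees; once that case is in hand, the rest is routine enumeration combined with citations of \cite{DoubleMinus, AC-Spectrum}.
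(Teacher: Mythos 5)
Your proposal is correct, but there is nothing in the paper to compare it against: this theorem is imported from \cite{DoubleMinus} and \cite{AC-Spectrum} with only the remark that those proofs carry over to fields with at least three elements, so the paper supplies no argument of its own. Your reconstruction of the ac-bound is sound and essentially self-contained: reducing the Kraft identity $\sum_i 2^{-d_i(t)}=1$ modulo $3$ does force $\sum_i(-1)^{d_i(t)}\equiv 1\pmod 3$ on every realized parity vector; your two-case induction (attach a leaf when some parity is odd, and glue an all-odd-depth $2$-leaf tree to an all-odd-depth $(a-2)$-leaf tree when all parities are even) realizes every admissible multiset; and the roots-of-unity count correctly yields $(2^n-(-1)^n)/3$. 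The implication from equality in the ac-bound to equality in the associative bound, and the attainment claims under the ``if and only if'' hypothesis, are also handled correctly. The one step where you lean on the literature rather than argue directly is the evaluation $M_n=\lfloor 2^n/3\rfloor$ of the number of parity vectors arising from $\B_n$, which you back out of the cited value of $\spa{-}$ for double minus; this is logically legitimate (the combinatorial count is established independently in \cite{DoubleMinus}), but a fully self-contained proof would count $M_n$ directly, e.g., by a recursion on leftmost decompositions. Finally, your insertion of the hypothesis $\operatorname{char}(F)\neq 2$ is a genuine improvement on the statement as printed: over $\FF_4$ the operation $a*b=-a-b$ degenerates to addition, so ``any field with at least three elements'' is too generous and the characteristic restriction is actually needed.
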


The two upper bounds in the above theorem are both well studied~\cite[A000975, A001045]{OEIS} from many other perspectives; the latter is known as the \emph{Jacobsthal sequence}.
The double minus operation on a field of three elements is actually the $3$-element groupoid $\SC2346$.
\begin{center}
\begin{tabular}{ccc}
\begin{tabular}{c|ccc}
$*$ & $0$ & $1$ & $2$ \\
\hline
$0$ & $0$ & $2$ & $1$ \\
$1$ & $2$ & $1$ & $0$ \\
$2$ & $1$ & $0$ & $2$ 
\end{tabular} \\
$\SC2346$
\end{tabular}
\end{center}
To generalize the above theorem, one could use a primitive root of unity $\omega:=e^{2\pi i/k}$ to define an operation $a*b:=\omega a + \omega b$ on the field of complex numbers, which reduces to the double minus operation when $k=2$;
for $k\ge3$, the $n$-th term of the associative spectrum was shown in \cite{LehWal-QG} to coincide with the number of equivalence classes of the equivalence relation on $n$-leaf binary trees that relates two trees if the depths of corresponding leaves are congruent modulo $k$.
Closed formulas for the associative spectrum and the ac-spectrum of this operation are yet to be determined.

\section{Questions and remarks}\label{sec:question}

We have some more questions other than those in the last section.
Our computations suggest that a majority of the $3330$ non-isomorphic $3$-element groupoids have their ac-spectrum reaching the upper bound $n!C_{n-1}$ and thus have their associative spectrum reaching the upper bound $C_{n-1}$.
Some other $3$-element groupoids have smaller spectra, including those given earlier in this paper as examples for various upper bounds to be sharp.
We also have computational data on the spectra of several other $3$-element groupoids but do not have any general result on them.

For instance, our computations show that the first several terms of the associative spectrum and ac-spectrum of each of the following groupoids are $1,1,2,5,12,28,65,151,351$ and $1,2,12,96,880$, respectively;
the former agrees with the initial terms of a trisection of the Padovan sequence~\cite[A034943]{OEIS}.
\begin{center}
\begin{tabular}{cccccccc}
\begin{tabular}{c|ccc}
$*$ & $0$ & $1$ & $2$ \\
\hline
$0$ & $0$ & $0$ & $0$ \\
$1$ & $1$ & $1$ & $0$ \\
$2$ & $1$ & $0$ & $1$ 
\end{tabular}
&
\begin{tabular}{c|ccc}
$*$ & $0$ & $1$ & $2$ \\
\hline
$0$ & $0$ & $0$ & $1$ \\
$1$ & $1$ & $1$ & $0$ \\
$2$ & $1$ & $0$ & $0$ 
\end{tabular}
&
\begin{tabular}{c|ccc}
$*$ & $0$ & $1$ & $2$ \\
\hline
$0$ & $0$ & $1$ & $1$ \\
$1$ & $0$ & $1$ & $0$ \\
$2$ & $0$ & $0$ & $1$ 
\end{tabular}
& 
\begin{tabular}{c|ccc}
$*$ & $0$ & $1$ & $2$ \\
\hline
$0$ & $0$ & $1$ & $1$ \\
$1$ & $0$ & $1$ & $0$ \\
$2$ & $1$ & $0$ & $0$ 
\end{tabular}
\\
$\SC258$ & $\SC685$ & $\SC1594$ & $\SC1600$  
\end{tabular}
\end{center}
It is clear that $\SC258$ and $\SC685$ are anti-isomorphic to $\SC1594$, $\SC1600$, respectively. 
One can check that $\SC258$ and $\SC685$ both satisfy at least the following identities.
\[  (wx)(yz) \approx (wx)(zy), \quad 
((wx)y)z \approx ((wx)z)y, \quad
(vw)(x(yz)) \approx (vw)((xy)z), \quad
v((wx)(yz)) \approx (v(wx))(yz) \]

Next, consider the following $3$-element groupoids.
\begin{center}
\begin{tabular}{cccccccc}
\begin{tabular}{c|ccc}
$*$ & $0$ & $1$ & $2$ \\
\hline
$0$ & $0$ & $0$ & $2$ \\
$1$ & $2$ & $0$ & $2$ \\
$2$ & $2$ & $2$ & $0$ 
\end{tabular}
&
\begin{tabular}{c|ccc}
$*$ & $0$ & $1$ & $2$ \\
\hline
$0$ & $0$ & $0$ & $2$ \\
$1$ & $2$ & $2$ & $0$ \\
$2$ & $2$ & $0$ & $0$ 
\end{tabular}
&
\begin{tabular}{c|ccc}
$*$ & $0$ & $1$ & $2$ \\
\hline
$0$ & $0$ & $1$ & $1$ \\
$1$ & $1$ & $0$ & $0$ \\
$2$ & $0$ & $0$ & $1$ 
\end{tabular}
&
\begin{tabular}{c|ccc}
$*$ & $0$ & $1$ & $2$ \\
\hline
$0$ & $0$ & $1$ & $1$ \\
$1$ & $1$ & $0$ & $1$ \\
$2$ & $0$ & $1$ & $0$ 
\end{tabular}
\\
$\SC1414$ & $\SC1477$ & $\SC1693$ & $\SC1717$ 
\end{tabular}
\end{center}
There is an anti-isomorphism between $\SC1414$ and $\SC1717$ and between $\SC1477$ and $\SC1693$ by swapping $1$ and $2$.
It is routine to check that $\SC1414$ and $\SC1693$ both satisfy the identities $(wx)(yz) \approx (yz)(wx)$ and $((wx)y)z \approx ((wx)z)y$.
Computations show that the first several terms of its associative spectrum and ac-spectrum are $1,1,2,5,13,35,97,275,794,2327$ and $1,2,12,96,980$;
the former matches with the initial terms of a \emph{generalized Catalan number}~\cite[A025242]{OEIS}, which counts Dyck paths of length $2n$ avoiding $UUDD$.


Computations also show that the first several terms of the associative spectrum and ac-spectrum of the following two anti-isomorphic groupoids are $1,1,2,5,14,42,132,429,1430$ and $1,2,12,108,1340$;
the former agrees with $C_{n-1}$ while the latter is less than $n!C_{n-1}$.
\begin{center}
\begin{tabular}{ccccccc}
\begin{tabular}{c|ccc}
$*$ & $0$ & $1$ & $2$ \\
\hline
$0$ & $0$ & $0$ & $0$ \\
$1$ & $1$ & $0$ & $1$ \\
$2$ & $1$ & $1$ & $1$ 
\end{tabular}
&
\begin{tabular}{c|ccc}
$*$ & $0$ & $1$ & $2$ \\
\hline
$0$ & $0$ & $1$ & $1$ \\
$1$ & $0$ & $0$ & $1$ \\
$2$ & $0$ & $1$ & $1$ 
\end{tabular}
\\
$\SC229$ & $\SC1553$
\end{tabular}
\end{center}
One can check that $\SC229$ satisfies the identity $((wx)y)z \approx ((wx)z)y$.

It would be nice if the associative spectra and ac-spectra of the above $3$-element groupoids (or even better, groupoids satisfying the same identities as the above groupoids) could be determined.

Another question is about the arithmetic mean on $\mathbb{R}$.
Cs\'{a}k\'{a}ny and Waldhauser~\cite{AssociativeSpectra1} showed that its associative spectrum is $C_{n-1}$. 
In previous work~\cite{AC-Spectrum}, we showed that its ac-spectrum is the number of ways to write $1$ as an ordered sum of $n$ powers of $2$~\cite[A007178]{OEIS}. 
It would be interesting to find the identities that could be used to characterize all the groupoids whose associative spectra and ac-spectra are bounded by the above and if possible, find a $3$-element groupoid to achieve the upper bounds.

Lastly, we provide a generalization of a result in our earlier work~\cite{AC-Spectrum}, which asserts that an associative groupoid $(G,*)$ must have $\spac{*}\le n!$ and this upper bound holds as an equality if $(G,*)$ is noncommutative and has an identity element.

\begin{theorem}\label{thm:n!}
For any groupoid $(G,*)$, we have $\spac{*} \leq n! \cdot \spa{*}$. Moreover, this inequality holds as an equality if $(G,*)$ is noncommutative and has an identity element.
\end{theorem}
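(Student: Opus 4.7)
The plan is to decompose $\F_n$ into orbits under a natural $\SS_n$ action: for $u \in \B_n$ and $\sigma \in \SS_n$, let $u \cdot \sigma$ denote the term obtained from $u$ by replacing each $x_i$ with $x_{\sigma(i)}$, and set $P_n^\sigma(*) := \{(u \cdot \sigma)^* : u \in \B_n\}$. Then $\F_n = \bigsqcup_{\sigma \in \SS_n} \B_n \cdot \sigma$, and $\overline{P}_n(*) = \bigcup_{\sigma \in \SS_n} P_n^\sigma(*)$. The identity $(u \cdot \sigma)^*(a_1, \ldots, a_n) = u^*(a_{\sigma(1)}, \ldots, a_{\sigma(n)})$ shows that $u^* \mapsto (u \cdot \sigma)^*$ is a well-defined bijection $P_n(*) \to P_n^\sigma(*)$ (inverse: substitute $a_i \mapsto b_{\sigma^{-1}(i)}$). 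Hence $|P_n^\sigma(*)| = \spa{*}$, and subadditivity gives $\spac{*} \leq n! \cdot \spa{*}$.

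For the equality, assume $(G,*)$ is noncommutative with a two-sided identity $e$. It suffices to prove that the sets $P_n^\sigma(*)$ are pairwise disjoint. Suppose for contradiction that $(u \cdot \sigma)^* = (v \cdot \tau)^*$ with $\sigma \neq \tau$. Relabeling via $a_i \mapsto b_{\tau^{-1}(i)}$ reduces this to $(u \cdot \pi)^* = v^*$ where $\pi := \tau^{-1}\sigma \neq e$. Since $\pi$ is nontrivial, there exist leaf positions $p < q$ with $\pi(p) > \pi(q)$.

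The technical heart of the argument is the following lemma, proved by a routine structural induction on $s = s_1 * s_2$ together with the obvious sub-claims that a term evaluated at all-$e$ inputs yields $e$ and with a single non-$e$ input $c$ yields $c$: if $s \in \F_n$ is evaluated under an assignment having $x_i = c$ at leaf position $p$, $x_j = d$ at leaf position $q$ with $p < q$, and every other leaf assigned $e$, then $s^* = c * d$ (the order constraint $p < q$ rules out the case $x_j \in s_1$ and $x_i \in s_2$, so commutativity is never needed). To finish, pick $c, d \in G$ with $c * d \neq d * c$, set $x_{\pi(p)} := c$, $x_{\pi(q)} := d$, and every other variable to $e$. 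Then in $u \cdot \pi$ the value $c$ sits at leaf $p$ and $d$ at leaf $q$, so the lemma gives $(u \cdot \pi)^* = c * d$; whereas in $v \in \B_n$ the leaves are labeled $x_1, \ldots, x_n$ from left to right, and since $\pi(q) < \pi(p)$ the value $d$ occupies the earlier leaf, giving $v^* = d * c$. This contradicts $(u \cdot \pi)^* = v^*$ and completes the proof. I expect the main obstacle to be the position-bookkeeping when composing the permutations $\sigma, \tau, \pi$ and when translating between leaf positions and variable indices; the lemma itself and the subadditivity bound are elementary.
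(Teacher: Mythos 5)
Your proposal is correct and follows essentially the same route as the paper: partition $\F_n$ into the $n!$ relabelled copies of $\B_n$ (each of size $\spa{*}$ as a set of term operations) for the inequality, and for the equality use an inversion between the two permutations together with the assignment that sends all but two variables to the identity element, so that the two terms evaluate to $a*b$ and $b*a$ respectively. The only cosmetic differences are that you normalize by $\tau^{-1}$ to compare against the identity permutation and that you spell out as an explicit induction the evaluation lemma the paper dismisses with ``it is easy to see.''
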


\begin{proof}
For a bracketing $t \in \B_n$ and a permutation $\sigma \in \SS_n$, let $t$ denote the full linear term obtained by replacing the variable $x_i$ with $x_{\sigma(i)}$ for all $i \in \{1, \dots, n\}$.
Consider two full linear terms in $\F_n$; they can be written as $s_\sigma$ and $t_\tau$, where $s, t \in \B_n$ and $\sigma, \tau \in \SS_n$.
It is clear that if $\sigma = \tau$, then $(s_\sigma)^* = (t_\tau)^*$ if and only if $s^* = t^*$.
The inequality $\spac{*} \leq n! \cdot \spa{*}$ follows immediately from this fact.

Assume now that $(G,*)$ is noncommutative and has a neutral element $0$.
Then there are elements $a, b \in G$ such that $a * b \neq b * a$.
Assume that $\sigma \neq \tau$.
Then there exist $i, j \in \{1, \dots, n\}$ such that $\sigma^{-1}(i) < \sigma^{-1}(j)$ and $\tau^{-1}(i) > \tau^{-1}(j)$.
Let $h \colon X_n \to G$ be the assignment $x_i \mapsto a$, $x_j \mapsto b$ and $x \mapsto 0$ for all $x \in X_n \setminus \{x_i, x_j\}$.
It is easy to see that $h(s_\sigma) = a * b$ and $h(t_\tau) = b * a$; hence $(s_\sigma)^* \neq (t_\tau)^*$.
We conclude that $(s_\sigma)^* = (t_\tau)^*$ if and only if $s^* = t^*$ and $\sigma = \tau$, and the equality $\spac{*} = n! \cdot \spa{*}$ follows.
\end{proof}

\section*{Acknowledgment}

We use Sage (\url{https://www.sagemath.org/}) to help discover and verify the results in this paper.


\begin{thebibliography}{99}

\bibitem{Siena}
J. Berman and S. Burris, A computer study of $3$-element groupoids, in: Logic and Algebra
(Pontignano, 1994), Lecture Notes in Pure and Appl. Math., 180, Dekker, 1996. (pp. 379--429)


\bibitem{Subassociative}
M. S. Braitt and\ D. Silberger, Subassociative groupoids, Quasigroups Related Systems {\bf 14} (2006), no.~1, 11--26.

\bibitem{AssociativeSpectra1}
B. Cs\'{a}k\'{a}ny and\ T. Waldhauser, Associative spectra of binary operations, Mult.-Valued Log. {\bf 5} (2000), no.~3, 175--200.

\bibitem{CatMod}
N. Hein and J. Huang, Modular Catalan numbers, European J. Comb.\ 61 (2017), 197--218.

\bibitem{VarCat}
N. Hein\ and\ J. Huang, Variations of the Catalan numbers from some nonassociative binary operations, Discrete Math.\ {\bf 345} (2022), no.~3, Paper No. 112711, 18 pp.

\bibitem{AC-Spectrum}
J. Huang and E. Lehtonen, The associative-commutative spectrum of a binary operation, Discrete Math. {\bf 346} (2023), no.~10, Paper No. 113535, 22 pp.

\bibitem{DoubleMinus}
J. Huang, M. Mickey and\ J. Xu, The nonassociativity of the double minus operation, J. Integer Seq.\ {\bf 20} (2017), no.~10, Art. 17.10.3, 11 pp. 

\bibitem{GraphAlgebra1}
E. Lehtonen and T. Waldhauser, Associative spectra of graph algebras I. Foundations, undirected graphs, antiassociative graphs, J. Algebraic Combin.\ {\bf 53} (2021), no.~3, 613--638.

\bibitem{GraphAlgebra2}
E. Lehtonen\ and\ T. Waldhauser, Associative spectra of graph algebras II. Satisfaction of bracketing identities, spectrum dichotomy, J. Algebraic Combin. {\bf 55} (2022), no.~2, 533--557. 

\bibitem{LehWal-QG}
E. Lehtonen and T. Waldhauser, Associativity conditions for linear quasigroups and equivalence relations on binary trees,
arXiv: 2310.09184.

\bibitem{Operads}
J.-L. Loday and B. Vallette, {\it Algebraic operads}, Grundlehren der mathematischen Wissenschaften, 346, Springer, Heidelberg, 2012.


\bibitem{Mezo}
I. Mez\H{o}, Periodicity of the last digits of some combinatorial sequences, J. Integer Seq. {\bf 17} (2014), no.~1, Article 14.1.1, 18 pp.


\bibitem{OEIS}
OEIS Foundation Inc. (2011), The On-Line Encyclopedia of Integer Sequences, \url{http://oeis.org}.


\end{thebibliography}
\end{document}